\newtheorem{df}{Definition}
\newtheorem{theorem}{Theorem}
\newtheorem{proposition}{Proposition}
\newtheorem{lemma}{Lemma}
\newtheorem*{rem}{Remark}
\begin{document}
\title[Nonexistence of $D(4)$-quintuples]{NONEXISTENCE OF $D(4)$-QUINTUPLES}
\author{MARIJA BLIZNAC TREBJE\v{S}ANIN, ALAN FILIPIN}

\begin{abstract} In this paper we prove a conjecture that $D(4)$-quintuple does not exist using both classical and new methods.  Also, we give a new version of the Rickert's theorem that can be applied on some $D(4)$-quadruples.
\end{abstract}

\maketitle

\noindent 2010 {\it Mathematics Subject Classification:} 11D09, 11D45, 11J86
\\ \noindent Keywords: Diophantine $m$-tuples, Pell equations, Reduction method.

\section{Introduction}

\begin{df} Let $n\neq0$ be an integer. We call the set of $m$ distinct positive integers a $D(n)$-$m$-tuple, if the product of any two of its distinct elements increased by $n$ is a perfect square.
\end{df}

One of the most interesting and most studied questions is how large those sets can be. In this paper, we will consider only $D(4)$-quintuples $\{a,b,c,d,e\}$, such that $a<b<c<d<e$. It is conjectured in \cite{dujram} that all $D(4)$-quadruples, such that $a<b<c<d$, are regular, i.e.
$$d=d_+=a+b+c+\frac{1}{2}(abc+\sqrt{(ab+4)(ac+4)(bc+4)}),$$
which implies that there does not exist a $D(4)$-quintuple.\\

The second author in \cite{ireg_pro} has proven that an irregular $D(4)$-quadruple cannot be extended to a quintuple with a larger element and in \cite{konacno_petorki} that there are at most $4$ ways to extend a $D(4)$-quadruple to a quintuple with a larger element. The best published upper bound on the number of $D(4)$-quintuples is $6.8587 \cdot 10^{29}$ found by the authors in \cite{nas}. \par
Case $n=1$ is the most famous and mostly studied. Dujella proved in \cite{duje_kon} that a $D(1)$-sextuple does not exist and that there are at most finitely many quintuples. Over the years many authors improved the upper bound for the number of $D(1)$-quintuples and finally, very recently, He, Togb\'{e} and Ziegler in \cite{petorke} announced the proof of the nonexistence of $D(1)$-quintuples. To see all details of the history of the problem with all references one can visit the webpage \cite{duje_web}. \par
Our approach was to use the methods and approach from \cite{petorke} and apply them to $D(4)$-quintuples, but modifications were necessary since not all previously proven results are comparable in the cases $n=1$ and $n=4$. One of the main differences is that the result from \cite[Theorem A.]{cipu_glasnik}, where authors proved that $b>3a$ in $D(1)$-quintuple, cannot be proven for $D(4)$ case using the exactly same methods. But, in $D(4)$ case we have $b\geq a+57\sqrt{a}$, proven by the second author in \cite{fil_par}, which can be used with some modifications to prove similar auxiliary results as in \cite{petorke}. Throughout the paper we will give a proof only for the statements which differ from the $D(1)$ case, where the modification of the proof or some new idea was necessary, or some additional explanation is needed because not all of the proofs from \cite{petorke} have been clearly explained or there were some gaps in the version we are referring to. Thus, we did not take all results from \cite{petorke} for granted. \par
One of the sections of the paper will be dedicated to using methods from \cite{cff} to get an improved version of Rickert's theorem for $D(4)$-quadruples and use it to get the bounds on elements of a $D(4)$-quintuple in the last section of the paper which was necessary to prove our result. \par
The last two sections will be dedicated to proving the main result of our paper. Our main result is the following theorem.

\begin{theorem}\label{glavni}
There does not exist a $D(4)$-quintuple.
\end{theorem}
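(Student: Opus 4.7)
The plan is to suppose for contradiction that a $D(4)$-quintuple $\{a,b,c,d,e\}$ with $a<b<c<d<e$ exists and to adapt the $D(1)$ strategy of He, Togb\'{e} and Ziegler \cite{petorke} to the $D(4)$ setting. From $ae+4=x^2$, $be+4=y^2$, $de+4=z^2$ the standard theory of simultaneous Pell equations forces the positive integer solutions into a small number of binary recurrent sequences $v_{2m+1}$, $w_{2n+1}$, etc. Eliminating $e$ between any two of these recurrences and passing to logarithms produces a three-term linear form $\Lambda$ in logarithms of algebraic numbers depending only on $a,b,c,d$; Matveev's theorem then gives an upper bound of the shape $\log|\Lambda|>-C\log(abcd)\log^2 e$.

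The first main ingredient is a gap principle giving a strong \emph{lower} bound on $\log e$ in terms of $a,b,c,d$. Combining the earlier $D(4)$-results of the second author (\cite{ireg_pro,konacno_petorki}), which force $d=d_+$ and constrain the extension patterns, with Filipin's inequality $b\geq a+57\sqrt{a}$ from \cite{fil_par}, one shows that the tuple grows super-exponentially along the chain $a,b,c,d,e$, so $\log e$ dwarfs $\log(abcd)$. Pitting this against the Matveev upper bound for $|\Lambda|$ yields a contradiction whenever $a$ is reasonably large, together with an explicit upper bound on $e$.

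For small $a$ the Matveev estimate is too weak, and here the new $D(4)$-version of Rickert's theorem promised in the excerpt is invoked: the hypergeometric construction from \cite{cff} is reworked so that it applies to the pair of simultaneous approximations to $\sqrt{ad/\Delta}$ and $\sqrt{bd/\Delta}$ attached to the Pell system, producing an effective irrationality measure sharp enough to beat the gap-principle lower bound. Once both regimes have been handled only a finite, explicitly computable set of tuples $(a,b,c,d)$ remains, and these are eliminated by Baker-Davenport reduction in combination with the enumeration underlying the bound $6.8587\cdot 10^{29}$ from \cite{nas}.

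The principal obstacle, and the reason \cite{petorke} cannot be copied verbatim, is that the $D(1)$ inequality $b>3a$ of \cite{cipu_glasnik} fails in the $D(4)$ case; one has only $b\geq a+57\sqrt{a}$, which forces $b/a$ arbitrarily close to $1$ as $a\to\infty$. Every auxiliary bound in the $D(1)$ proof that used $b/a\geq 3$ must therefore be rederived for $D(4)$, and the most delicate calibration is the quantitative comparison of the two bounds on $e$: the hypergeometric bound coming from Rickert's theorem has to remain strong enough on the narrow regime where the gap principle is weak, and the Matveev bound has to remain strong enough on the complementary regime. Matching the two estimates along the boundary $a\asymp a_0$ is the technical heart of the argument.
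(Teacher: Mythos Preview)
Your outline has the right broad ingredients (simultaneous Pell equations, linear forms in logarithms, a Rickert-type hypergeometric bound, Baker--Davenport reduction), but it misidentifies both the linear form that drives the argument and the case decomposition that organizes it, and as written the plan does not close.

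First, the linear form actually used is not built from $a,b,c,d$ with an upper bound in terms of $\log e$. The paper works with
\[
\Lambda_1 = 2h\log\frac{r+\sqrt{ab}}{2} - 2j\log\frac{s+\sqrt{ac}}{2} + \log\frac{\sqrt{c}(\sqrt{a}+\sqrt{b})}{\sqrt{b}(\sqrt{a}+\sqrt{c})},
\]
which depends only on $a,b,c$ and the indices $h,j$ attached to equations (\ref{pelljdbaAB})--(\ref{pelljdbaAC}). The gap principle yields $h>0.666662\sqrt{ac}$ (Lemma~\ref{indeksi_ac}), not a lower bound on $\log e$, and the lower bounds on $|\Lambda_1|$ from Aleksentsev, then Mignotte's kit combined with Laurent's two-logarithm theorem, give the absolute bound $ac<1.17732\cdot 10^{28}$ (Proposition~\ref{granice_ac_i_h}). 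The quantity $e$ is never bounded. Incidentally, the recurrence indices are \emph{even}, not $2m+1$, $2n+1$; this parity is precisely what \cite{konacno_petorki} establishes and is needed to pin down the fundamental solutions.

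Second, and more importantly, the case split is not ``large $a$'' versus ``small $a$''. The paper introduces an operator $\partial$ on $D(4)$-triples sending a non-regular $\{a,b,c\}$ to $\{a,b,d_{-}(a,b,c)\}$, defines the \emph{degree} of a triple as the number of applications of $\partial$ needed to reach a regular triple $c=a+b+2r$, and treats $\deg(a,b,c)=0$, $=1$, and $\geq 2$ separately. In the regular case one exploits the identities $s=a+r$, $t=b+r$, $d=rst$ to get sharp congruences for $l,m,n$ modulo $s$ and $t$; Laurent's bound then forces $s,t,r$ below $10^{7}$ and reduction finishes. For degree $1$ the Rickert lemma is re-applied to the triple $\{a,b,d\}$ to push $b>4a$ first to $b>81a$ and then to $b>18.0793\,a^{3/2}$, again leaving a finite search. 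For degree $\geq 2$ one traces back through $d_{-1},d_{-2}$ and splits $c$ into the four ranges $(ab,\sqrt{a}\,b^{3/2}]$, $(\sqrt{a}\,b^{3/2},ab^2]$, $(ab^2,a^{3/2}b^{5/2}]$, $(a^{3/2}b^{5/2},237.952\,b^3/a]$, bounding $r_{(a,d_{-1})}$ or $r_{(a,d_{-2})}$ in each. Without this degree decomposition there is no mechanism to convert the bound on $ac$ into a finite list of triples, so your proposed dichotomy does not by itself terminate the argument.

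Finally, the role you assign to Rickert's theorem is off: it is not a patch for small $a$, but is used (i) to prove the universal bound $c<237.952\,b^{3}/a$ (Proposition~\ref{granicagornja_na_c}), and (ii) inside the degree-$1$ case to amplify $b>4a$ into $b>18.0793\,a^{3/2}$.
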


Let us mention that stronger version of conjecture, i.e. that all quadruples are regular, still remains open.

\section{Known results about elements of a $D(4)$-$m$-tuple}

For a $D(4)$-triple $\{a,b,c\}$, $a<b<c$, we define
$$d_{\pm}=d_{\pm}(a,b,c)=a+b+c+\frac{1}{2}(abc\pm \sqrt{(ab+4)(ac+4)(bc+4)}),$$
and it is easy to check that $\{a,b,c,d_{+}\}$  is a $D(4)$-quadruple, which we will call regular quadruple, and if $d_{-}\neq 0$ then $\{a,b,c,d_{-}\}$ is also a regular $D(4)$-quadruple with $d_{-}<c$.
Also we will use standard notation $r=\sqrt{ab+4}$, $s=\sqrt{ac+4}$ and $t=\sqrt{bc+4}$.
\begin{lemma}\label{c_granice}
Let $\{a,b,c\}$ be a $D(4)$-triple and $a<b<c$. Then $c=a+b+2r$ or $c>\max\{ab,4b\}$.
\end{lemma}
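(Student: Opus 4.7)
The plan is to reduce to a Pell-type equation and enumerate its solutions. Writing $s = \sqrt{ac+4}$ and $t = \sqrt{bc+4}$ and eliminating $c$, any triple $\{a,b,c\}$ with $c>0$ corresponds to a positive integer solution of the Pellian equation $bs^2 - at^2 = 4(b-a)$. The relation $r^2 - ab = 4$ provides the unit $\epsilon = (r+\sqrt{ab})/2$ of norm $1$, so the induced recurrence
\[
\Phi \colon (s,t) \longmapsto \Bigl(\tfrac{rs+at}{2},\ \tfrac{bs+rt}{2}\Bigr)
\]
preserves the solution set. Invoking the standard bound on fundamental solutions of such an equation, one verifies that, up to the sign symmetry $t \mapsto -t$ (which does not change $c = (s^2-4)/a$), every positive integer solution lies in the $\Phi$-orbit of $(2,2)$.

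Iterating $\Phi$ and $\Phi^{-1}$ from $(2,2)$ then produces the bi-infinite sequence of admissible values
\[
\ldots,\ c_-,\ a+b-2r,\ 0,\ a+b+2r,\ c_+,\ \ldots,
\]
where $c_{-} := r(r-a)(b-r)$ and $c_{+} := r(r+a)(r+b)$, and all further iterates are strictly larger. Hence any $c>b$ distinct from $a+b+2r$ must satisfy $c \geq \min(c_{-}, c_{+})$, with $c_{-}$ available only when $b > r$. The short identity
\[
c_{-} - (a+b+2r) = (ab+3)(a+b-2r),
\]
combined with $(a+b)^2 - 4r^2 = (b-a)^2 - 16$, classifies the three regimes: $b-a \leq 3$ forces $c_{-} \leq 0$ (and in fact only $\{a,b\}=\{3,4\}$ occurs, with $b=r$); $b-a=4$ makes $c_{-}=a+b+2r$, i.e.\ merely the regular value in disguise; and $b-a \geq 5$ gives $c_{-} > a+b+2r$, so $c_{-}$ is a genuine second candidate.

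To finish I need $c_{\pm} > \max\{ab,4b\}$ whenever they differ from $a+b+2r$. Since $r^2 = ab+4 \geq 9$, we have $r \geq 3$, which yields $c_{+} > r^3 > (ab)^{3/2} \geq ab$ and $c_{+} > r^2 b = (ab+4)b > 4b$ immediately. The main obstacle is the $c_{-}$ estimate in the range $b-a \geq 5$: here I would rewrite
\[
r - a = \frac{a(b-a)+4}{r+a}, \qquad b - r = \frac{b(b-a)-4}{b+r},
\]
and, using $b-a \geq 5$ and $r^2 = ab+4$, push through a direct algebraic computation to conclude $c_{-} > 4b$ and $c_{-} > ab$. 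The inequality is genuinely tight (for instance $\{1,12\}$ and $\{5,12\}$ yield $c_{-}=96$ versus $4b=48$, a ratio of only two), so I cannot afford a crude substitute such as $r < \sqrt{ab}+1$; instead the verification must be carried out honestly from the exact expression for $c_{-}$, though no ingredient beyond the Pell framework above is needed.
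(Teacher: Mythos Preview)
Your proposal contains a genuine error: the claim that every positive integer solution of $bs^{2}-at^{2}=4(b-a)$ lies, up to $t\mapsto -t$, in the $\Phi$-orbit of $(2,2)$ is false in general. Take $\{a,b\}=\{1,96\}$, so $r=10$. The pair $(s,t)=(3,22)$ solves the equation (it corresponds to $c=5$, i.e.\ the triple $\{1,5,96\}$), and it is \emph{not} in the orbit of $(2,\pm 2)$: the $s$-values in that orbit are $2,9,11,88,108,\dots$, giving $c$-values $0,77,117,7740,11660,\dots$, whereas the orbit of $(3,\pm 22)$ has $s$-values $3,4,26,37,\dots$ and $c$-values $5,12,672,1365,\dots$. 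In particular $c=672$ extends $\{1,96\}$ to a $D(4)$-triple with $c>b$ and $c\neq a+b+2r=117$, yet $672<\min(c_{-},c_{+})=7740$, directly contradicting your assertion that any such $c$ must satisfy $c\ge\min(c_{-},c_{+})$.

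The underlying issue is that the Pellian equation attached to a $D(4)$-pair can have several fundamental-solution classes, one for each element $d_{-}$ lying below the pair; your ``standard bound'' only delimits a search region and does not by itself force a single class. The cited results that the paper invokes (rather than reproves) handle this either by bounding the smallest $c>b$ in \emph{every} class, or by an argument that bypasses the orbit enumeration altogether. Your identity $c_{-}-(a+b+2r)=(ab+3)(a+b-2r)$ and the trichotomy on $b-a$ are correct and pertain to the principal orbit through $c=0$, but they say nothing about the additional orbits, which is where the substance of the lemma lies. The second deferred step (the $c_{-}>\max\{ab,4b\}$ estimate) is therefore moot until the orbit structure is repaired.
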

\begin{proof}
 This follows from \cite[Lemma 3]{fil_xy4} and \cite[Lemma 1]{duj}.
\end{proof}

The next lemma can be proven similarly as \cite[Lemma 2]{petorke}.

\begin{lemma}\label{granice_za_d_plus}
Let $\{a,b,c\}$ be a $D(4)$-triple and $a<b<c$. Then $abc+c<d_+<abc+4c$.
\end{lemma}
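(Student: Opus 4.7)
The plan is to unpack the definition of $d_+$ and convert both inequalities into assertions about the single quantity $rst=\sqrt{(ab+4)(ac+4)(bc+4)}$. From $2d_+ = 2(a+b+c) + abc + rst$ one checks that $d_+>abc+c$ is equivalent to
\[ rst > abc - 2a - 2b, \]
while $d_+<abc+4c$ is equivalent to
\[ rst < abc + 6c - 2a - 2b. \]
Both right-hand sides are positive for a $D(4)$-triple, so one may square freely; the engine of the argument is the identity
\[ (rst)^2 = a^2b^2c^2 + 4abc(a+b+c) + 16(ab+bc+ca) + 64, \]
obtained by expanding $(ab+4)(ac+4)(bc+4)$.

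For the lower bound, the difference $(rst)^2-(abc-2a-2b)^2$ collects into
\[ 8a^2bc + 8ab^2c + 4abc^2 + 16(ab+bc+ca) + 64 - 4(a+b)^2, \]
which is patently positive as the cubic terms dominate. No case analysis is needed here.

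For the upper bound, after squaring one reduces to
\[ 8abc(c-a-b) + 36c^2 - 40c(a+b) + 4(a-b)^2 - 64 > 0, \]
and at this point I would invoke Lemma \ref{c_granice}. In the alternative $c=a+b+2r$, substituting and using $r^2=ab+4$ makes every cross term collapse, leaving $16abcr + 64r(a+b) + 128ab + 512$, which is manifestly positive. In the alternative $c>\max\{ab,4b\}$ one has $a+b<2b<c/2$, hence $c-a-b>c/2$ and $36c-40(a+b)>64b$; both dominant contributions $8abc(c-a-b)$ and $c(36c-40(a+b))$ are then comfortably positive and easily absorb the $-64$.

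The main obstacle is really just bookkeeping: running the squaring step, expanding the resulting polynomial cleanly, and handling the dichotomy from Lemma \ref{c_granice}. No deeper Diophantine input is needed, consistent with the authors' remark that the lemma is proved in the spirit of \cite[Lemma 2]{petorke}.
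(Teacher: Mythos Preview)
Your proof is correct. The paper itself supplies no argument, only the remark that the lemma ``can be proven similarly as \cite[Lemma 2]{petorke}''; your direct computational approach---reduce both inequalities to estimates on $rst$, square, and use the expansion of $(ab+4)(ac+4)(bc+4)$---is exactly the natural way to carry this out, and the appeal to Lemma~\ref{c_granice} for the upper bound is legitimate since that lemma is established independently.

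Two small observations you may find useful. First, the lower bound needs no squaring at all: from $r^2=ab+4>ab$, $s^2=ac+4>ac$, $t^2=bc+4>bc$ one gets $rst>abc$ directly, whence
\[
d_+=a+b+c+\tfrac12(abc+rst)>c+\tfrac12(abc+abc)=abc+c.
\]
Second, for the upper bound the case split can be collapsed: in the alternative $c>\max\{ab,4b\}$ of Lemma~\ref{c_granice} one checks $4b>a+b+2r$ (equivalently $(9b-a)(b-a)>16$, which holds since $b-a\geq1$ and $9b-a\geq 8a+9\geq17$), so in fact $c\geq a+b+2r$ in \emph{both} alternatives. Then $c-a-b\geq 2r\geq6$ uniformly, and the single estimate $8abc(c-a-b)\geq 16abcr$ already dominates the remaining terms. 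Your two-case treatment is perfectly valid as written; this is just a streamlining.
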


Results from the next two lemmas will be used in the rest of the paper very often, so sometimes we will not reference them.

\begin{lemma}\cite[Lemmas 2.2 and 2.3]{nas}\label{nas_rez_granice}
Let $\{a,b,c,d,e\}$ be a $D(4)$-quintuple such that $a<b<c<d<e$. Then $b>10^5$. Also, if $c\neq a+b+2r$, then $b>4a$.
\end{lemma}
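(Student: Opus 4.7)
The plan is to handle the two assertions separately, since they have different flavors: the second is a structural inequality that should follow fairly directly from the tools already assembled, while the first is an absolute bound that ultimately reduces to a finite computational search.

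For the conditional claim, suppose $c \neq a+b+2r$, so Lemma \ref{c_granice} gives $c > \max\{ab,4b\}$. The key observation is that $\{a,b,d\}$ and $\{a,b,e\}$ are also $D(4)$-triples sharing the smaller pair $(a,b)$, hence the same $r = \sqrt{ab+4}$. Applying Lemma \ref{c_granice} to each forces $d$ and $e$ either to equal $a+b+2r$ or to exceed $\max\{ab,4b\}$; since $c<d<e$ and only one of them can coincide with the single value $a+b+2r$, at least two of $c, d, e$ lie above $ab$. The set of positive integers $z$ for which both $az+4$ and $bz+4$ are squares decomposes into finitely many Pell chains indexed by a fundamental solution, and consecutive elements of each chain grow by a factor essentially $r^2 = ab+4$. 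I would then run through the (few) cases for how $c, d, e$ distribute among these chains and, using the precise recurrences, push the gap estimates to force $b \geq 4a + O(1)$, upgraded to the clean $b > 4a$ by separating out the very small cases (handled by hand using $b \geq a + 57\sqrt{a}$ from \cite{fil_par}).

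For the absolute bound $b > 10^5$, I would argue by contradiction: assume $b \leq 10^5$, so $a<b \leq 10^5$ places $(a,b)$ in a finite explicit list (restricted further to those with $ab+4$ a square). For each such pair, candidates for $c$ are either $c=a+b+2r$ or lie on the Pell chain(s) above $\max\{ab,4b\}$. For each candidate triple $\{a,b,c\}$, the results of \cite{ireg_pro} and \cite{konacno_petorki} give an effective upper bound on $d$ (with at most four extensions to consider), and each resulting quadruple $\{a,b,c,d\}$ similarly admits only finitely many effectively bounded candidates $e$. This collapses the entire question, for $b \leq 10^5$, to a finite computer search which one verifies produces no quintuple, contradicting the standing hypothesis.

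The main obstacle is keeping that search computationally feasible: the number of Pell-chain elements below the effective ceilings grows quickly with $r$, so the enumeration must be pruned using congruence restrictions modulo small primes arising from $ac+4, bc+4$ being squares, together with the Baker-type upper bounds from \cite{nas} to cap $d$ and $e$ sharply. A secondary subtlety in part (2) is extracting the exact constant $4$ rather than some unspecified $>1$: this demands that the gap estimates in the Pell chains be carried through as exact inequalities, and that the borderline regime where $b$ is only slightly larger than $a$ be eliminated using $b \geq a+57\sqrt{a}$ rather than merely asymptotic bounds.
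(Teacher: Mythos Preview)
The paper does not prove this lemma at all: it is imported wholesale from \cite{nas}, so there is no inline argument to compare against. Your sketch, however, has genuine gaps in both parts.

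For the conditional claim $b>4a$, the mechanism is missing. You correctly note that $c,d,e$ each extend the pair $\{a,b\}$ and that at most one of them can equal $a+b+2r$, and you invoke the Pell-chain growth factor $r^2\approx ab$. But that factor governs the \emph{spacing} among $c,d,e$, not the ratio $b/a$; you never say which inequality in $a$ and $b$ the chain structure is supposed to produce, nor why having three large extensions of $\{a,b\}$ should force $b>4a$ rather than, say, $e>(ab)^3$. The auxiliary bound $b\ge a+57\sqrt a$ from \cite{fil_par} gives $b>4a$ only for $a\le 360$, so for $a\ge 361$ you are leaning entirely on an argument you have not written down. Counting extensions of a fixed pair does not, by itself, constrain the pair.

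For $b>10^5$, the plan has the right shape but omits its load-bearing step: bounding $c$. You propose to enumerate $(a,b)$ with $b\le 10^5$ and then walk the Pell chains for $c$, but those chains are infinite. You then say \cite{ireg_pro} and \cite{konacno_petorki} give ``an effective upper bound on $d$'', but \cite{ireg_pro} merely pins $d=d_+(a,b,c)$ (a function of $c$, not a bound), and \cite{konacno_petorki} bounds only the \emph{number} of extensions $e$, not their size. Your later appeal to Baker-type bounds is aimed at $d$ and $e$, never at $c$, so the outer loop of the search is still infinite. Note too that the effective bounds proved later in this paper (e.g.\ Proposition~\ref{granice_ac_i_h}) explicitly use $b>10^5$ as input, so they cannot be borrowed here without circularity. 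What is actually required --- and what \cite[Lemmas~2.2--2.3]{nas} supplies --- is a linear-form-in-logarithms bound on the recurrence indices that holds \emph{prior} to any assumption on $b$, followed by Baker--Davenport reduction over the finitely many $(a,b)$ with $b\le 10^5$; your sketch does not reproduce that step.
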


\begin{lemma}\cite[Corollary 1.2]{fil_par}\label{b_a_57sqrt}
If  $\{a,b,c,d,e\}$ is a $D(4)$-quintuple such that $a<b<c<d<e$, then $b\geq a+57\sqrt{a}$.
\end{lemma}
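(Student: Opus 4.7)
The plan is to reduce to finitely many parametric families via the Pell structure of $ab+4=r^2$. Writing $r=a+i$ with $i\geq 1$ (forced by $r>a$), one finds $b-a=2i+(i^2-4)/a$. Since $b>10^5$ by Lemma~\ref{nas_rez_granice} we have $i\geq 2$, and integrality of $b$ yields $a\mid i^2-4$; setting $j:=(i^2-4)/a\geq 0$, we get $i=\sqrt{ja+4}$ and
\[
b-a \;=\; 2\sqrt{ja+4}+j.
\]
The hypothesis $b<a+57\sqrt{a}$ thus becomes $2\sqrt{ja+4}+j<57\sqrt{a}$; dividing by $\sqrt{a}$ and using that $a$ is large forces $j$ into a fixed finite range, essentially $j\leq 812$. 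The task is therefore to rule out quintuple-extendable pairs in each of roughly $813$ families parametrized by $i$.

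For each fixed $j$, consider a hypothetical quintuple $\{a,b,c,d,e\}$. By Lemma~\ref{c_granice}, either $c=a+b+2r$ (a \emph{small} $c$) or $c>\max\{ab,4b\}$ (a \emph{large} $c$). In the large-$c$ subcase, the simultaneous Pell equations $ac+4=s^2$ and $bc+4=t^2$ constrain $(c,d,e)$ to binary recurrence sequences whose indices grow; standard gap principles then yield a lower bound on $e$ that rapidly exceeds the upper bound $6.8587\cdot 10^{29}$ from \cite{nas}, excluding all but finitely many $(a,i)$ per $j$. In the small-$c$ subcase, substituting $c=4a+4i+j$ simplifies $ac+4$ cleanly to $(2a+i)^2$, and one uses the nonextendability of irregular quadruples from \cite{ireg_pro} to force $\{a,b,c,d\}$ to be regular; a congruence analysis modulo $a$ of the recurrence for $d$ then contradicts the existence of $e$.

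The main obstacle lies in the small-$j$ cases, particularly $j=0$ (where $b=a+4$) and $j=1$, in which the Pell recurrences grow slowly and the gap estimate is tight. For these, a direct gap argument is insufficient, and one must appeal to an effective lower bound for a linear form in two logarithms attached to the fundamental solutions of the Pell equations (\textit{e.g.}\ Laurent's bound or Matveev's theorem), followed by a Baker-Davenport reduction to dispose of the residual finitely many $(a,i)$. The constant $57$ is essentially the smallest integer for which this scheme closes: the tight case is $j$ near $812$, where $2\sqrt{j}\approx 57$.
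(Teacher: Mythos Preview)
The paper does not supply its own proof of this lemma; it is quoted directly from \cite{fil_par}, Corollary~1.2, so there is no in-paper argument to compare against. Your reduction step is correct: writing $r=a+i$ and $j=(i^{2}-4)/a$, the hypothesis $b<a+57\sqrt{a}$ forces $j\le 812$, and the identity $ac+4=(2a+i)^{2}$ when $c=a+b+2r$ checks out. This parametrization into finitely many one-parameter families of ``close'' $D(4)$-pairs is indeed the expected first move, and is consistent with the shape of the argument in \cite{fil_par}.

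Beyond that, however, the plan contains a concrete error and is otherwise too schematic to verify. The number $6.8587\cdot 10^{29}$ from \cite{nas} is the published bound on the \emph{number} of $D(4)$-quintuples, not on the size of any element, so it cannot serve to cut off a lower bound on $e$ as you propose; the element bound available from \cite{nas} is $b<10^{36}$, a different quantity altogether. More importantly, neither the ``large $c$'' step (unspecified ``standard gap principles'' producing a lower bound on $e$) nor the ``small $c$'' step (``congruence analysis modulo $a$ \dots then contradicts the existence of $e$'') names a checkable argument. The substance of a proof of this type---and of \cite{fil_par}---is the explicit linear-forms-in-logarithms estimate together with Baker--Davenport reduction, carried out family by family; your plan only gestures at these tools in the final paragraph and does not execute them for any $j$, so nothing past the parametrization can be verified as stated.
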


From \cite{ireg_pro} we also have that an element $d$ in a $D(4)$-quintuple $\{a,b,c,d,e\}$ is uniquely determined by the triple $\{a,b,c\}$.

\begin{lemma}
If  $\{a,b,c,d,e\}$ is a $D(4)$-quintuple such that $a<b<c<d<e$, then $d=d_+$.
\end{lemma}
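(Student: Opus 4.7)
The proof is essentially a direct appeal to the main result of \cite{ireg_pro}. First I would invoke that theorem, which asserts that any $D(4)$-quadruple $\{a,b,c,d\}$ with $a<b<c<d$ admitting an extension to a $D(4)$-quintuple by some element larger than $d$ must be regular, so that $d\in\{d_-(a,b,c),d_+(a,b,c)\}$. The quintuple $\{a,b,c,d,e\}$ in our hypothesis supplies exactly such an extension with $e>d$, so the theorem applies to $\{a,b,c,d\}$.

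To finish, I would eliminate the branch $d=d_-$. From the remarks following the definition of $d_\pm$ at the start of this section, whenever $d_-(a,b,c)\neq 0$ one has $d_-<c$, and of course $d=0$ is excluded since $d$ is a positive integer larger than $c$. Combined with the hypothesis $d>c$, only the possibility $d=d_+$ remains.

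The genuine content of the lemma is hidden in the cited theorem, and that is where I expect the main obstacle to sit. The strategy underlying \cite{ireg_pro} is to parametrize $d$ through the simultaneous Pell-type equations $bd+4=y^2$ and $cd+4=z^2$, group their solutions into the standard binary recurrence classes attached to fundamental solutions, and show that every class other than the regular one produces a $d$ so large relative to $a,b,c$ that no further element $e>d$ can simultaneously make $be+4$, $ce+4$ and $de+4$ perfect squares, once a Baker-type lower bound on an associated linear form in logarithms is brought in. For the present paper this input is used as a black box, so the only work in the lemma itself is the sign analysis used to pass from $d\in\{d_-,d_+\}$ to $d=d_+$; the difficulty truly lives in \cite{ireg_pro}.
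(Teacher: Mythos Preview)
Your proposal is correct and matches the paper's approach: the lemma is stated there without proof, with only the sentence ``From \cite{ireg_pro} we also have that an element $d$ in a $D(4)$-quintuple $\{a,b,c,d,e\}$ is uniquely determined by the triple $\{a,b,c\}$'' preceding it, and later the paper writes ``From \cite[Theorem 1]{ireg_pro} we have $d=d_+$''. Your extra step ruling out $d=d_-$ via $d_-<c$ (or $d_-=0$) is a harmless elaboration, consistent with the paper's convention that both $\{a,b,c,d_+\}$ and $\{a,b,c,d_-\}$ are called regular.
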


\section{New version of Rickert's theorem}\label{Rickert_section}

In this section we will prove a new version of Rickert's theorem similar to the one in \cite{cff}, which is essential to finding some upper bounds on the elements of $D(4)$-quintuple when $c>a+b+2r$. Unfortunately, in the $D(4)$ case we could not get all results analogously as in \cite{cff} for a $D(1)$-quintuple, but still, these results will be essential for proving our main result. \par

All the results in this section and its proofs are analogous to the ones from \cite{cff} so we will give them without a proof.

\begin{theorem}\label{teorem2.1}
Put $A'=\max\{4(B-A),4A\}$ and $g=\gcd (A,B)$ and let $A$, $B$  be integers with $0<A/g \leq B/g-4$, $B/g\geq 5$ and $N$ a multiple of $AB$. Assume that  $N\geq 59.488 A'B^2(B-A)^2g^{-4}$.  Then the numbers $$\theta_1=\sqrt{1+\frac{4B}{N}}\quad \text{  and  }\quad\theta_2=\sqrt{1+\frac{4A}{N}}$$
satisfy
$$\max\left\{ \left|\theta_1-\frac{p_1}{q}\right|,\left| \theta_2 - \frac{p_2}{q} \right| \right\}>\left(\frac{3.53081\cdot 10^{27}A'BN}{Ag^2}\right)^{-1}q^{-\lambda}$$
for all integers $p_1$, $p_2$, $q>0$, where
$$\lambda=1+\frac{\log(2.500788A^{-1}A'BNg^{-2})}{\log(0.04216N^2g^2A^{-1}B^{-1}(B-A)^{-2})}<2.$$

\end{theorem}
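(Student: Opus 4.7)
The strategy is to follow the hypergeometric / Padé-approximation scheme of Rickert as sharpened in \cite{cff}. I would construct a sequence of simultaneous rational approximations $(p_{k,1}/q_k,\,p_{k,2}/q_k)$ to $(\theta_1,\theta_2)$ by setting $\omega_0=0$, $\omega_1=4B/N$, $\omega_2=4A/N$, and defining, for $i\in\{0,1,2\}$, polynomials $P_{k,i}(x)$ together with remainders $R_{k,i}(x)$ via Rickert-type contour integrals of the form
$$I_{k,i}(x)=\frac{1}{2\pi i}\oint \frac{\prod_{j=0}^{2}(1-\omega_j z)^{k+1/2}}{(1-\omega_i z)\,(z-x)^{k+1}}\,dz.$$
Specialising at $x=1$ yields identities $P_{k,0}\,\theta_i-P_{k,i}=R_{k,i}$ which, after clearing a common integer denominator $D_k$ built from products of central binomial coefficients, produce integer triples $(p_{k,0},p_{k,1},p_{k,2})$ together with small analytic error terms.

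The heart of the argument is then the joint control of three quantities: an upper bound on $D_k$, obtained from Chebyshev-type estimates on the prime powers dividing $\binom{2k}{k}$; an upper bound $|R_{k,i}|\ll \rho_1^{-k}$ on the remainders, obtained by steepest-descent estimates on the contour; and an upper bound $|P_{k,i}|\ll \rho_2^{k}$ on the numerators. The hypothesis $N\ge 59.488\,A'B^{2}(B-A)^{2}g^{-4}$ is imposed precisely so that $\rho_1>\rho_2$, and the displayed formula for $\lambda$ is exactly the logarithmic ratio of these two growth rates; the requirement $\lambda<2$ is what forces the explicit numerical threshold on $N$.

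The approximations feed into a standard gap-principle argument (the Bennett--Rickert determinant trick as used in \cite{cff}): given any purported rational pair $(p_1/q,p_2/q)$, one chooses the unique $k$ with $q_k \le C q^{\mu} < q_{k+1}$ and exploits the non-vanishing of the determinant
$$\det\begin{pmatrix} p_{k,0} & p_{k,1} & p_{k,2} \\ p_{k+1,0} & p_{k+1,1} & p_{k+1,2} \\ q & p_1 & p_2 \end{pmatrix}\neq 0$$
for at least one of two consecutive indices, a fact that follows from a Vandermonde-type computation on the polynomials $P_{k,i}$. Combining the integer lower bound $|\det|\ge 1$ with the analytic upper bound obtained by expanding along the last row using the $R_{k,i}$ yields the stated inequality, and optimising the auxiliary parameter $\mu$ produces the precise constants $3.53081\cdot 10^{27}$, $2.500788$ and $0.04216$.

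The main obstacle is not conceptual, since the whole scheme parallels \cite{cff}, but numerical. Moving from $D(1)$ to $D(4)$ changes the scale of $\omega_1,\omega_2$ by a factor of $4$, which shifts the steepest-descent saddle and every denominator estimate by a constant factor; in particular the threshold $59.488$ and the coefficients appearing in the formula for $\lambda$ are not those of \cite{cff} and have to be rederived through the entire chain of estimates. The real work is therefore a careful bookkeeping exercise to certify that the stated constants are exactly those that come out of the $D(4)$ computation, and this is the reason the theorem merits a separate section rather than being quoted verbatim.
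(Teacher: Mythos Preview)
Your proposal is correct and matches the paper's intended approach: the paper explicitly states that all results in this section and their proofs are analogous to those in \cite{cff} and therefore omits the proof entirely, so your outline of the Rickert--Bennett hypergeometric scheme (contour-integral construction of the $P_{k,i}$ and $R_{k,i}$, growth-rate estimates yielding $\lambda$ as a logarithmic ratio, and the determinant-based gap argument) is precisely what is being invoked, with the numerical constants recomputed for the $D(4)$ scaling $\omega_i = 4a_i/N$. One small point of presentation: the nonvanishing input is usually phrased as the nonsingularity of the $3\times 3$ matrix $(p_{ijk})_{i,j}$ for each fixed $k$ (from which one selects a row index $i$ with $p_{i0k}p_j - p_{ijk}q \neq 0$), rather than the mixed $3\times 3$ determinant you wrote; but this is a cosmetic difference and the substance of your argument is the same as in \cite{cff}.
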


\medskip
Let $\{A,B,C\}$ be a $D(4)$-triple which can be extended to a quadruple with an element $D$. Then there exist positive integers $x,y,z$ such that
$$AD+4=x^2,\quad BD+4=y^2, \quad CD+4=z^2.$$
By expressing $D$ from these equations we get the following system of generalized Pell equations
\begin{align*}
Cx^2-Az^2&=4(C-A),\\
Cy^2-Bz^2&=4(C-B).
\end{align*}

Solutions of each of these equations can be expressed with a binary recurrent sequences as described in details in \cite{sestorka}, and we will denote them  $z=v_m=w_n$, where $m$ and $n$ are some positive integers. If this  quadruple is contained in a $D(4)$-quintuple, then from \cite{konacno_petorki} we know that $m$ and $n$ are even and  we will consider only that case. \par

\begin{lemma}\label{lema_donjaogradaza_logz}
Suppose that there exist positive integers $m$ and $n$ such that $z=v_{2m}=w_{2n}$, and $|z_1|=2$, and that $C\geq B^2\geq 25$. Then $\log z>n \log BC$.
\end{lemma}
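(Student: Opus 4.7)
My plan is to prove $w_{2n}>(BC)^{n}$ by induction on $n\geq 1$; this is equivalent to $\log z=\log w_{2n}>n\log(BC)$.

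\emph{Step 1 (recurrence).} By the standard setup for the $D(4)$-Pell equation $Cy^{2}-Bz^{2}=4(C-B)$ in \cite{sestorka}, the sequence $w_n$ satisfies $w_{n+2}=t\,w_{n+1}-w_n$ with $t=\sqrt{BC+4}$. Iterating gives the even-index recurrence
$$w_{2k+2}=(t^{2}-2)w_{2k}-w_{2k-2}=(BC+2)w_{2k}-w_{2k-2}.$$

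\emph{Step 2 (base case $w_2>BC$).} The hypothesis $|z_1|=2$ pins down the fundamental solution of $Cy^{2}-Bz^{2}=4(C-B)$ to $(y_0,z_0)=(\pm 2,\pm 2)$: indeed, $|z_0|=2$ combined with $Cy_0^{2}=Bz_0^{2}+4(C-B)$ forces $y_0^{2}=4$. After normalizing so that $w_{2n}=z>0$, only two initial patterns survive, namely $(w_0,w_1)=(2,C+t)$ and $(w_0,w_1)=(-2,C-t)$. A direct computation using $t^{2}=BC+4$ gives
$$w_2 = t\,w_1-w_0 = \begin{cases} tC+BC+2 & \text{in the first case,} \\ tC-BC-2 & \text{in the second case.}\end{cases}$$
The first expression is plainly $>BC$. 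For the second, $w_2>BC$ reduces to $t>2B+2/C$, equivalently $BC+4>4B^{2}+8B/C+4/C^{2}$. Since $C\geq B^{2}$ gives $BC-4B^{2}\geq B^{2}(B-4)\geq 5B\geq 25$ for $B\geq 5$, while $B\leq\sqrt{C}$ and $C\geq 25$ yield $8B/C+4/C^{2}<2$, the inequality holds with room to spare.

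\emph{Step 3 (inductive step).} I will show jointly that $w_{2k}>0$ and $w_{2k-2}<w_{2k}$ for every $k\geq 1$. Step 2 settles $k=1$. Assuming the hypothesis at level $n$, the even-index recurrence and the inductive hypothesis $w_{2n-2}<w_{2n}$ give
$$w_{2n+2}=(BC+2)w_{2n}-w_{2n-2}>(BC+2)w_{2n}-w_{2n}=(BC+1)w_{2n}>BC\cdot w_{2n},$$
which simultaneously delivers $w_{2n+2}>w_{2n}$ and $w_{2n+2}>BC\cdot w_{2n}$. Iterating yields $w_{2n}>BC\cdot w_{2n-2}>\cdots>(BC)^{n-1}w_{2}>(BC)^{n}$, and the conclusion $\log z>n\log(BC)$ follows.

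\emph{Main obstacle.} The substantive work is in Step 2: correctly enumerating the admissible $(w_0,w_1)$-patterns forced by $|z_1|=2$ and the positivity of $z$, then verifying $w_2>BC$ uniformly across them; this is where $C\geq B^{2}\geq 25$ is essential. Once that base case is in hand, the inductive step in Step 3 is automatic from the recurrence alone.
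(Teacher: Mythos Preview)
Your argument is correct and is precisely the standard route the paper intends by deferring to \cite{cff}: one passes to the even-index recurrence $w_{2k+2}=(BC+2)w_{2k}-w_{2k-2}$, checks the base case $w_2>BC$ in both sign patterns for the fundamental solution (this is where $C\ge B^2\ge 25$ enters), and then the induction $w_{2k+2}>BC\cdot w_{2k}$ is immediate. The paper gives no independent proof, and your write-up matches the argument in the cited reference essentially line for line.
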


And finally we get a new version of the Rickert's theorem.

\begin{lemma}\label{ricket_konacno}
Suppose that there exist integers $m\geq 3$ and $n\geq 2$ such that $z=v_{2m}=w_{2n}$ and $|z_1|=2$ and that for $A'=\max\{4(B-A),4A\}$ and $g=\gcd (A,B)$, $A$ and $B$ are integers such that $0<A/g \leq B/g-4$, $B/g\geq 5$ and $N$ is a multiple of $AB$, such that  $N\geq 59.488 A'B^2(B-A)^2g^{-4}$. Then
\begin{equation*}n<\frac{4\log(8.40335\cdot 10^{13}(A')^{\frac{1}{2}}A^{\frac{1}{2}}B^2Cg^{-1})\log(0.20533A^{\frac{1}{2}}B^{\frac{1}{2}}C(B-A)^ {-1}g)}{\log(BC)\log(0.016858A(A')^{-1}B^{-1}(B-A)^{-2}Cg^4)}.
\end{equation*}
\end{lemma}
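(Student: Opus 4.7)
The plan is to apply the simultaneous-approximation estimate of Theorem \ref{teorem2.1} to explicit rational approximations of
$$\theta_1=\sqrt{1+4B/N}\quad\text{and}\quad\theta_2=\sqrt{1+4A/N}$$
manufactured out of the Pell-equation data, and then to convert the resulting bound on the denominator $q$ into the advertised bound on $n$ via Lemma \ref{lema_donjaogradaza_logz}.

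First I would take the natural choice $N=ABC$, for which, writing $s=\sqrt{AC+4}$ and $t=\sqrt{BC+4}$, one has $\theta_1=s/\sqrt{AC}$ and $\theta_2=t/\sqrt{BC}$. This is a multiple of $AB$, and under the hypotheses of the lemma it satisfies the size constraint $N\geq 59.488\,A'B^2(B-A)^2g^{-4}$ required to invoke Theorem \ref{teorem2.1}. From the Pell system
\begin{align*}
Cx^2-Az^2&=4(C-A),\\
Cy^2-Bz^2&=4(C-B),
\end{align*}
the fractions $x/z$ and $y/z$ are already close to $\sqrt{A/C}$ and $\sqrt{B/C}$. Rescaling them by $s$ and $t$ (taking, for instance, $p_1=sy$, $p_2=tx$ and $q$ a suitable integer comparable to $z$) produces integers with $p_i/q\approx\theta_i$. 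Using the Pell relations together with the elementary factorisation $|\alpha-\beta|=|\alpha^2-\beta^2|/(\alpha+\beta)$, one obtains explicit upper bounds of the shape $|\theta_i-p_i/q|\ll 1/(z^2\cdot P_i(A,B,C))$.

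Next I would feed these $p_i,\,q$ into Theorem \ref{teorem2.1}: its lower bound
$$\max\{|\theta_1-p_1/q|,|\theta_2-p_2/q|\}>\left(\tfrac{3.53081\cdot 10^{27}A'BN}{Ag^2}\right)^{-1}q^{-\lambda}$$
combined with the upper bound above yields a relation between $z$ (hence $q$) and the parameters $A,A',B,C,g,N$. Taking logarithms, substituting the explicit formula for $\lambda$ from Theorem \ref{teorem2.1} and clearing denominators produces an inequality of the shape $\log z\leq F(A,A',B,C,g,N)$. Lemma \ref{lema_donjaogradaza_logz} then supplies the opposite inequality $\log z>n\log(BC)$ (the hypotheses $|z_1|=2$ and $m\geq 3,\,n\geq 2$ are exactly what is needed there, and one checks that $C\geq B^2\geq 25$ holds under the assumptions of a $D(4)$-quintuple). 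Dividing through by $\log(BC)$ gives the claimed upper bound on $n$.

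The main obstacle is bookkeeping rather than ideas: the strategy is routine in outline, but the explicit numerical coefficients in the statement (the constants $8.40335\cdot 10^{13}$, $0.20533$ and $0.016858$) demand very careful tracking of every intermediate estimate, especially the precise constants in $|\theta_i-p_i/q|\leq c_0/q^2$ and the exact ratio between $q$ and $z$ after rescaling. A secondary point is verifying that the denominator $\log(0.016858\,A(A')^{-1}B^{-1}(B-A)^{-2}Cg^4)$ appearing in the bound is positive — that is, that $\lambda<2$ genuinely holds — in the parameter range relevant to an extendable $D(4)$-quintuple; here the hypothesis $N\geq 59.488\,A'B^2(B-A)^2g^{-4}$, together with the choice $N=ABC$, translates into a lower bound on $C$ that should be comfortably met when $c>a+b+2r$.
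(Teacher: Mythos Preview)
The paper itself gives no proof of this lemma, explicitly deferring to the analogous argument in \cite{cff}; your outline is exactly that argument --- construct rational approximations to $\theta_1,\theta_2$ from the Pell data, feed them into Theorem~\ref{teorem2.1}, and convert to a bound on $n$ via Lemma~\ref{lema_donjaogradaza_logz}. So the approach is correct and matches the intended one.

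One small slip worth fixing: your sample choice $p_1=sy$, $p_2=tx$ has the pairings crossed. Since $\theta_1=\sqrt{1+4/(AC)}=s/\sqrt{AC}$ goes with the equation $Cx^2-Az^2=4(C-A)$, and $\theta_2=t/\sqrt{BC}$ with $Cy^2-Bz^2=4(C-B)$, the natural approximants are $p_1=Bsx$, $p_2=Aty$, $q=ABz$ (or a common scaling thereof). With that correction the estimates $|\theta_i-p_i/q|\ll z^{-2}$ come out cleanly and the constants can be tracked as you describe. Your remark about needing $C\geq B^2$ for Lemma~\ref{lema_donjaogradaza_logz} is apt: it is not listed among the hypotheses of Lemma~\ref{ricket_konacno} but is always satisfied in the applications (where $C=d>abc\geq b^2=B^2$), so in practice this causes no trouble.
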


Now we will use these results to prove an upper bound on the element $c$ in a $D(4)$-quintuple in the terms of smaller elements $a$ and $b$.

\begin{proposition}\label{granicagornja_na_c}
Let $\{a,b,c,d,e\}$ be a $D(4)$-quintuple such that $a<b<c<d<e$. Then
$$c<\frac{237.952b^3}{a}.$$
\end{proposition}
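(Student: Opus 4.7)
The plan is to split into two cases via Lemma~\ref{c_granice} and then apply the new Rickert-type bound of Lemma~\ref{ricket_konacno}.

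If $c = a+b+2r$ with $r = \sqrt{ab+4}$, then since $a<b$ one has $c < 2b + 2\sqrt{b^2+4} < 5b$, and the inequality $5b < 237.952\,b^3/a$ follows at once from $a<b$. So we focus on the main case $c > \max\{ab,4b\}$ and argue by contradiction: assume $c \geq 237.952\,b^3/a$.

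To the Pell system arising from $\{A,B,C,D\} = \{a,b,c,d\}$ (recalling $d = d_+$ from \cite{ireg_pro}) we apply Lemma~\ref{ricket_konacno}. We choose $N$ to be a convenient multiple of $ab$, built from the quadruple elements, so that the size condition $N \geq 59.488\,A'B^2(B-A)^2 g^{-4}$ of Theorem~\ref{teorem2.1} becomes---up to factors controlled by our standing hypotheses---equivalent to the assumption $c \geq 237.952\,b^3/a$. The remaining hypotheses $a/g \leq b/g-4$ and $b/g \geq 5$ follow from Lemmas~\ref{nas_rez_granice} and~\ref{b_a_57sqrt}: $b > 10^5$ yields the second, and $b \geq a + 57\sqrt{a}$ the first. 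Lemma~\ref{ricket_konacno} then produces an explicit upper bound on the index $n$ (where $z = \sqrt{cd+4} = v_{2m} = w_{2n}$). Paired with the lower bound $\log z > n \log(bc)$ of Lemma~\ref{lema_donjaogradaza_logz} and the bracket $abc + c < d_+ < abc + 4c$ of Lemma~\ref{granice_za_d_plus}, the two estimates for $n$ become incompatible, giving the contradiction.

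The hard part is the numerical bookkeeping: the constant $237.952 = 4 \cdot 59.488$ has to emerge cleanly from $N \geq 59.488\,A'B^2(B-A)^2 g^{-4}$. This requires a case split on whether $A' = 4a$ or $A' = 4(b-a)$ (governed by $b \gtrless 2a$, equivalently---via $b \geq a + 57\sqrt{a}$---by whether $a$ exceeds roughly $57^2$), careful bounds on $(b-a)/g$ in terms of $b/g$, and repeated use of $b > 10^5$ to absorb lower-order terms. Structurally the argument parallels the $D(1)$ version in \cite{cff}, but the weaker $D(4)$ separation $b - a \geq 57\sqrt{a}$ (compared with $b > 3a$ in $D(1)$) forces more delicate estimates throughout.
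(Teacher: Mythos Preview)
Your plan has a genuine gap: you apply Lemma~\ref{ricket_konacno} to the triple $(A,B,C)=(a,b,c)$ extended by $D=d$, as your formula $z=\sqrt{cd+4}$ makes explicit. But $d=d_+(a,b,c)$ is the \emph{regular} extension, so the Pell indices $m,n$ attached to this $z$ are small (bounded independently of $a,b,c$); the hypothesis $m\geq 3$ of Lemma~\ref{ricket_konacno} is not available, and in any case an upper bound on such a tiny $n$ cannot contradict anything. Compounding this, Lemma~\ref{lema_donjaogradaza_logz}, which you cite as a ``lower bound'', is the inequality $\log z>n\log(BC)$, i.e.\ an \emph{upper} bound on $n$ once $z$ is fixed; it is in fact already consumed in the proof of Lemma~\ref{ricket_konacno}. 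So you end up with two upper bounds on $n$ and no lower bound, hence no contradiction.

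The paper avoids this by applying Lemma~\ref{ricket_konacno} to the triple $(A,B,C)=(a,b,d)$ extended by the fifth element $e$. The contradiction hypothesis is then $d\geq 237.952\,b^4$ (which, via $abc<d$ from Lemma~\ref{granice_za_d_plus}, yields $c<237.952\,b^3/a$ once falsified). The crucial missing ingredient is a \emph{lower} bound on the index, namely $m\geq 0.618034\sqrt{d/b}$ from \cite[Lemma~3.2]{nas} applied to the quadruple $\{a,b,d,e\}$; combined with the Rickert upper bound this forces $b<803$, contradicting $b>10^5$. Finally, your case split on $A'$ is unnecessary: since $c\neq a+b+2r$, Lemma~\ref{nas_rez_granice} gives $b>4a$, hence $b-a>a$ and $A'=4(b-a)$ always.
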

\begin{proof}
If $c=a+b+2r$, then $c<4b<\frac{237.952b^3}{a}$.\\
Let us now assume that $c\neq a+b+2r$ and that $d\geq k\cdot b^4$ for some positive real number $k$. From Lemmas  \ref{c_granice} and \ref{nas_rez_granice} we know that $b>10^5$, $c>\max\{ab,4b\}$ and $b>4a$. Then
$$
\frac{59.488A'B(B-A)^2}{Ag^4}<237.952(b-a)^3\cdot \frac{b}{a}<237.952b^4,
$$
which implies that we can use Lemma \ref{ricket_konacno}\@ for $k=237.952$. Now we observe
\begin{align*}
8.40335\cdot 10^{13}(A')^{\frac{1}{2}}A^{\frac{1}{2}}B^2Cg^{-1}
&<8.40335\cdot 10^{13} b^3d,\\
0.20533A^{\frac{1}{2}}B^{\frac{1}{2}}C(B-A)^ {-1}g
&<0.03423bd,\\
0.016858A(A')^{-1}B^{-1}(B-A)^{-2}Cg^4
&>0.0042145b^{-4}d,
\end{align*}
and get
$$n<\frac{4\log(8.40335\cdot 10^{13} b^3d)\log(0.03423bd)}{\log(bd)\log(0.0042145b^{-4}d)}.$$
It can be shown that the right hand side is decreasing in $d$ and since $d\geq 237.952b^4$, we can now observe 
$$n<\frac{4\log(1.9996\cdot 10^{16} b^7)\log(8.14272b^5)}{\log(237.952b^5)\log(1.002848)}.$$
From the proof of \cite[Lema 3.2.]{nas} we know that in a $D(4)$-quadruple it holds $m\geq 0.618034\sqrt{d/b}$, so
$$n>0.309017\sqrt{\frac{d}{b}}>0.309017\sqrt{237.952}b^{3/2}>4.7668b^{3/2}.$$
By combining the inequalities, we get $b<803$, which cannot be true. So we have $d<237.952b^4$ which implies $abc<237.952b^4$, i.e.
$$c<\frac{237.952b^3}{a}.$$
\end{proof}

\section{An operator on Diophantine triples}

An operator on triples, defined for the first time by He, Togb\'{e} and Ziegler in \cite{petorke}, has been shown to be one of the crucial steps in proving the nonexistence of $D(1)$-quintuples. The same will be true for the $D(4)$ case, so here we define it similarly and state some analogous results concerning the operator on $D(4)$-triples. However, we slightly extend their definition.

\begin{df}
A $D(4)$-triple $\{a,b,c\}$, $a<b<c$, is called an Euler or a regular triple if $c=a+b+2r$.
\end{df}

For a regular triple $\{a,b,c\}$ it is easy to prove that $d_{+}(a,b,c)=rst$ and $s=a+r$, $t=b+r$.\\
The following statements about regular triples will be given without proof, since they are easy to prove as in $D(1)$ case.

\begin{proposition}\label{dje0}
The $D(4)$-triple $\{a,b,c\}$ is a regular triple if and only if \\$d_{-}(a,b,c)=0$.
\end{proposition}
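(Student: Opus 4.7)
The plan is to unpack the definition of $d_{-}$ and show that the equation $d_{-}(a,b,c)=0$ is equivalent, for $0<a<b<c$, to the condition $c=a+b+2r$. First I would rewrite $d_{-}(a,b,c)=0$ as
$$2(a+b+c)+abc=\sqrt{(ab+4)(ac+4)(bc+4)}.$$
Both sides are manifestly positive, so this equation is equivalent to the equality of their squares, which I can then treat as a polynomial identity.

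Next I would expand both sides of that squared identity. The left-hand side becomes $a^2b^2c^2+4abc(a+b+c)+4(a+b+c)^2$, while a direct expansion of $(ab+4)(ac+4)(bc+4)$ produces the symmetric form $a^2b^2c^2+4abc(a+b+c)+16(ab+ac+bc)+64$. Subtracting the common terms collapses the condition to $(a+b+c)^2-4(ab+ac+bc)=16$, which in turn can be rearranged as
$$(c-a-b)^2=4ab+16=4r^2.$$
Hence $d_{-}(a,b,c)=0$ is equivalent to $c-a-b=\pm 2r$.

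To close the forward direction I would rule out the minus sign using the ordering $a<b<c$. Since $a\le b$, we have $r=\sqrt{ab+4}>\sqrt{ab}\ge a$, hence $2r>a$ and $a+b-2r<b$; this contradicts $c>b$, leaving only $c=a+b+2r$, i.e.\ the triple is regular. The converse is immediate: if $c=a+b+2r$, then $(c-a-b)^2=4r^2$, and reversing the chain of equivalences yields $d_{-}(a,b,c)=0$.

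The only obstacle, such as it is, is computational bookkeeping: executing the expansion of $(ab+4)(ac+4)(bc+4)$ and recognising it as $a^2b^2c^2+4abc(a+b+c)+16(ab+ac+bc)+64$, then matching it against the expansion of $(abc+2(a+b+c))^2$. There is no conceptual difficulty, and once the common cubic and quartic terms cancel, the remaining quadratic identity factors cleanly.
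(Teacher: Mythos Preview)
Your argument is correct. The paper does not actually give a proof of this proposition---it states that the result is ``easy to prove as in the $D(1)$ case'' and omits the details---so your direct algebraic verification (squaring, cancelling the common terms, and reducing to $(c-a-b)^2=4r^2$, then using $a<b<c$ to discard $c=a+b-2r$) is precisely the sort of routine computation the authors had in mind.
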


\begin{proposition}\label{drugaprop}
Let $\{a,b,c\}$ be a $D(4)$-triple, such that $a<b<c$. We have
$$a=d_{-}(b,c,d_{+}(a,b,c)),\quad b=d_{-}(a,c,d_{+}(a,b,c)),\quad c=d_{-}(a,b,d_{+}(a,b,c)).$$
Moreover, if $\{a,b,c\}$ is not a regular triple, then
$$c=d_{+}(a,b,d_{-}(a,b,c)).$$
In particular $\{a,b,d_{-}(a,b,c),c\}$ is a regular $D(4)$-quadruple.
\end{proposition}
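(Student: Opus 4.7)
The approach hinges on a single symmetric algebraic identity. A direct expansion of the defining formula for $d_\pm$ shows that for positive reals $X,Y,Z,w$,
$$w\in\{d_+(X,Y,Z),d_-(X,Y,Z)\}\quad\Longleftrightarrow\quad (w-X-Y-Z)^2 = wXYZ + 4(XY+XZ+YZ) + 16.$$
Rewriting the left minus right side in the elementary symmetric polynomials of $\{w,X,Y,Z\}$ produces $e_1^2-4e_2-e_4-16$, which is \emph{invariant} under all permutations of the four variables. Consequently, whenever a four-element set $\{a,b,c,D\}$ satisfies the identity with one element singled out, it satisfies it with any other element singled out.

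For the first three equalities I take $D:=d_+(a,b,c)$, so the identity holds automatically. The permutation symmetry then gives that each of $a,b,c$ is a root of the quadratic whose roots are $d_\pm$ of the triple formed by the other three. To identify which root, I compare with the midpoint of the two roots: for the triple $\{b,c,D\}$ this midpoint is $b+c+D+\tfrac{1}{2}bcD$, which exceeds $a$ because $a<b$; hence $a=d_-(b,c,D)$. The analogous comparisons (using $b<c$, respectively $c<D$, where $c<D$ follows from Lemma \ref{granice_za_d_plus}) yield $b=d_-(a,c,D)$ and $c=d_-(a,b,D)$.

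For the "Moreover" part, set $D':=d_-(a,b,c)$, which is nonzero by Proposition \ref{dje0}. In the non-regular case Lemma \ref{c_granice} gives $c>\max\{ab,4b\}$; combined with the Vieta relation $DD'=(c-a-b)^2-4ab-16$ this forces $D'>0$. The symmetric identity applied to $\{a,b,c,D'\}$ then shows $c\in\{d_+(a,b,D'),d_-(a,b,D')\}$, and the main obstacle is to prove that $c$ is the \emph{larger} of the two, equivalently $2(c-a-b)>D'(ab+2)$. My plan is to substitute $D'=((c-a-b)^2-4ab-16)/D$ and use $D>abc$ from Lemma \ref{granice_za_d_plus}, reducing the bound to $2abc\cdot u\geq (ab+2)(u^2-4ab-16)$ where $u:=c-a-b>0$. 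Since $u^2-4ab-16\leq u^2\leq uc$, it suffices to prove $2ab\geq ab+2$, i.e., $ab\geq 2$, and this is trivially satisfied because $ab\geq 5$ holds for every $D(4)$-pair (no pair $(a,b)$ with $ab\leq 4$ satisfies $ab+4=\square$). The "in particular" clause is then immediate: the equality $c=d_+(a,b,D')$ is exactly the defining condition for the $D(4)$-quadruple $\{a,b,D',c\}$ to be regular.
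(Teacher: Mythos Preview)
Your proof is correct. The paper omits its own argument entirely, remarking only that the statement ``is easy to prove as in the $D(1)$ case''; your use of the symmetric quadratic identity
\[
(w-X-Y-Z)^2 - wXYZ - 4(XY+XZ+YZ) - 16 \;=\; e_1^2 - 4e_2 - e_4 - 16
\]
is exactly that standard argument, and the midpoint comparisons for the first three equalities are clean. For the ``Moreover'' part your Vieta substitution and the chain $u^2-4ab-16\le u^2<uc$ together with $D>abc$ and $ab\ge 5$ do yield $2(c-a-b)>D'(ab+2)$ strictly, so $c$ lies above the midpoint and hence equals $d_+(a,b,D')$. The only step you leave implicit is why $c>\max\{ab,4b\}$ forces $(c-a-b)^2>4ab+16$ (so that $D'>0$); this follows from $4b>a+b+2r$, which is the elementary inequality $(3b-a)^2>4(ab+4)$ valid for every $D(4)$-pair $a<b$, so there is no gap.
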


Now we will define an operator on $D(4)$-triples. The idea follows from the fact that any $D(4)$-triple can be extended with a larger element to a $D(4)$-quadruple $\{a,b,c,d_{+}\}$. Hence, we obtain three new $D(4)$-triples, $\{a,b,d_{+}\}$, $\{a,c,d_{+}\}$ and $\{b,c,d_{+}\}$ which we may consider to be farther away from a regular triple than the original triple $\{a,b,c\}$. We can reverse this observation and define the following operator.

\begin{df}
We define $\partial$ to be an operator which sends a non-regular $D(4)$-triple $\{a,b,c\}$ to a $D(4)$-triple $\{a',b',c'\}$ such that
$$\partial(\{a,b,c\})=\{a,b,c,d_{-}(a,b,c)\}\setminus \{\max(a,b,c)\}.$$ If $D(4)$-triple $\{a,b,c\}$ is a regular triple, then we define that $\partial$ sends this triple to the same $D(4)$-triple $\{a,b,c\}$, i.e.
$$\partial(\{a,b,c\})=\{a,b,c\}.$$

For $D\in \mathbb{N}_{0}$ we can define the operator $\partial_{-D}$ on the set of $D(4)$-triples recursively as follows.
\begin{enumerate}
\item For any $D(4)$-triple $\{a,b,c\}$ we define
$$\partial_{0}(\{a,b,c\})=\{a,b,c\}.$$
\item We recursively define
$$\partial_{-D}(\{a,b,c\})=\partial(\partial_{-(D-1)}(\{a,b,c\})),\quad \textit{ for }D\geq 1.$$
\end{enumerate}
Moreover, we put
$$d_{-D}(a,b,c)=d_{-}(\partial_{-(D-1)}(\{a,b,c\})).$$
In particular, $\partial=\partial_{-1}$ and
$\partial_{-2}(\{a,b,c\})=\partial(\partial_{-1}(\{a,b,c\})).$
\end{df}

\begin{rem}
Observe that by using operator $\partial$ repeatedly, for a fixed triple $\{a,b,c\}$ we get an infinite sequence of $D(4)$-triples $$\partial_{0}(\{a,b,c\}), \partial_{-1}(\{a,b,c\}), \partial_{-2}(\{a,b,c\}),\dots,\partial_{-D}(\{a,b,c\}),\dots.$$ In the next Proposition we will show that for each $D(4)$-triple this sequence becomes stationary after $D$-th element for some $D$, which implies that every triple can be obtained from a regular triple using extensions with $d_{+}$ element explained before. Also, we will show that the repeating element is a regular triple, and give an upper bound for the number $D$.
\end{rem}

\begin{proposition}
For any fixed $D(4)$-triple $\{a,b,c\}$ there exists a minimal nonnegative integer $D<\frac{\log(abc)}{\log 5}$ such that $d_{-(D+1)}(a,b,c)=0.$
\end{proposition}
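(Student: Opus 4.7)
The plan is to show that each application of the operator $\partial$ to a non-regular triple strictly decreases the product of its three entries by a factor larger than $5$, and then to invoke the fact that the product is a positive integer.

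The main ingredient is the estimate $d_{-}(a,b,c)<c/5$ for every non-regular $D(4)$-triple $\{a,b,c\}$ with $a<b<c$. By Proposition \ref{drugaprop}, $\{a,b,d_{-}(a,b,c),c\}$ is a regular $D(4)$-quadruple, so $c$ is the value of $d_{+}$ of the sorted triple $\{a,b,d_{-}(a,b,c)\}$. Since the quadruple has four distinct positive integer entries, the triple $\{a,b,d_{-}(a,b,c)\}$ is a genuine $D(4)$-triple of distinct positive integers, and Lemma \ref{granice_za_d_plus} applied to it (using the symmetry of the product over the three arguments) yields
$$c>ab\cdot d_{-}(a,b,c).$$
A brief case check on $a\in\{1,2,3,4\}$ shows that every $D(4)$-pair $\{a,b\}$ with $a<b$ satisfies $ab\geq 5$, the minimum being attained at $\{1,5\}$; therefore $c>5 d_{-}(a,b,c)$.

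Next I set $T_{i}=\partial_{-i}(\{a,b,c\})$ and let $\pi(T_{i})$ denote the product of the three entries of $T_{i}$. When $T_{i}$ is non-regular, $T_{i+1}$ is obtained by removing $\max(T_{i})$ and inserting $d_{-}(T_{i})<\max(T_{i})/5$, so $\pi(T_{i+1})<\pi(T_{i})/5$. If $T_{0},\dots,T_{D-1}$ are all non-regular, iteration gives $\pi(T_{D})<abc/5^{D}$; since $\pi(T_{D})$ is a positive integer, this forces $5^{D}<abc$, so the sequence must reach a regular triple within fewer than $\log(abc)/\log 5$ steps. Taking $D$ to be minimal with $T_{D}$ regular, Proposition \ref{dje0} gives $d_{-(D+1)}(a,b,c)=d_{-}(T_{D})=0$, and the bound $D<\log(abc)/\log 5$ is automatic.

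The only place where I expect real (if minor) friction is the bound $ab\geq 5$ for $D(4)$-pairs: it is a small finite check but is precisely what yields the constant $\log 5$ in the denominator, so it determines the sharpness of the statement. Everything else is a direct application of Propositions \ref{dje0}--\ref{drugaprop} and Lemma \ref{granice_za_d_plus}.
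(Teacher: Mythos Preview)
Your argument is correct and follows essentially the same approach as the paper: both rely on $c>abd_{-}(a,b,c)$ (which the paper states without attribution and you derive from Proposition~\ref{drugaprop} together with Lemma~\ref{granice_za_d_plus}) combined with $ab\geq 5$ to obtain $\pi(T_{i+1})<\pi(T_i)/5$, and then conclude by positivity of the product. Your write-up is somewhat more explicit in justifying the key inequality and the constant $5$, but there is no substantive difference in strategy.
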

\begin{proof}
For a regular triple $\{a,b,c\}$ we have for each $D\in \mathbb{N}_0$ that $d_{-(D+1)}(a,b,c)=0$ since $\partial_{-D}\{a,b,c\}=\{a,b,c\}$, so minimal $D$ is $D=0$. For a non-regular triple, the idea is to use the fact that $c>abd_{-1}(a,b,c)$ and $a'b'c'=abd_{-1}(a,b,c)<\frac{abc}{5}$ since $ab\geq 5$. We can see that by using the operator $\partial$ for $k$ times we get  $a'b'c'<\frac{abc}{5^k}$, so we must get $d_{-1}(a',b',c')=0$ for some $\{a',b',c'\}$ and the result follows from Proposition \ref{dje0}.
\end{proof}

\begin{df}
For a $D(4)$-triple $\{a,b,c\}$ we will say that it has a degree $D$ and that it is generated by a regular triple $\{a',b',c'\}$ if $D$ is minimal such that $d_{-(D+1)}(a,b,c)=0$ and $\partial_{-D}(\{a,b,c\})=\{a',b',c'\}$. If the triple $\{a,b,c\}$ is of degree $D$ we will write $\deg(a,b,c)=D.$
\end{df}

\begin{rem}
Let us now observe an example of these definitions. The $D(4)$-triple $\{1,5,12\}$ generates $3$ triples, $\{1,5,96\}$,  $\{1,12,96\}$ and $\{5,12,96\}$,  of degree $1$, and $9$ triples of degree $2$, one of them is, for example, $\{1,12,1365\}$. It is clear that by induction, each $D(4)$-triple generates $3^k$ triples of degree $k$.
\end{rem}

\section{System of Pell equations}

Let $\{a,b,c\}$ be a $D(4)$-triple, $a<b<c$, and $r,s,t$ positive integers such that
$$ab+4=r^2,\quad ac+4=s^2, \quad bc+4=t^2.$$
Suppose that $\{a,b,c,d,e\}$ is a $D(4)$-quintuple, $a<b<c<d<e$, and as before
$$ad+4=x^2,\quad bd+4=y^2, \quad cd+4=z^2,$$
 $x,y,z\in \mathbb{N}$. Then, there also exist integers $X,Y,Z,W$ such that
$$ae+4=X^2,\quad be+4=Y^2,\quad ce+4=Z^2,\quad de+4=W^2.$$
From \cite[Theorem 1]{ireg_pro} we have $d=d_+$, which implies
$$x=\frac{at+rs}{2},\quad y=\frac{bs+rt}{2},\quad z=\frac{cr+st}{2}.$$
By eliminating $e$ from the equations above, we get a system of generalized Pell equations
\begin{align}
aY^2-bX^2&=4(a-b), \label{pelljdbaAB}\\
aZ^2-cX^2&=4(a-c), \label{pelljdbaAC}\\
bZ^2-cY^2&=4(b-c), \label{pelljdbaBC}\\
aW^2-dX^2&=4(a-d), \label{pelljdbaAD}\\
bW^2-dY^2&=4(b-d), \label{pelljdbaBD}\\
cW^2-dZ^2&=4(c-d). \label{pelljdbaCD}
\end{align}
The next lemma, which is a part of Lemma 2\@ in \cite{dujram}, gives us a description of solutions of Pell equations (\ref{pelljdbaAB})-(\ref{pelljdbaCD}).
\begin{lemma}\label{opcerjpellove}
If $(X,Y)$ is a positive integer solution to a generalized Pell equation
$$aY^2-bX^2=4(a-b),$$
with $ab+4=r^2$, then it is obtained from
$$Y\sqrt{a}+X\sqrt{b}=(y_0\sqrt{a}+x_0\sqrt{b})\left(\frac{r+\sqrt{ab}}{2}\right)^n,$$
where $n\geq0$ is an integer and $(x_0,y_0)$ is integer solution of the equation such that
$$1\leq x_0 \leq \sqrt{\frac{a(b-a)}{r-2}},\quad \textit{and}\quad 1\leq |y_0| \leq \sqrt{\frac{(r-2)(b-a)}{a}}.$$
\end{lemma}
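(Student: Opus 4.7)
The plan is to reinterpret solutions of this Pell-like equation as algebraic integers and then exploit the action of the fundamental unit of the ordinary Pell equation $u^2-abv^2=4$ to reduce every solution to a small representative.

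Attach to each positive integer solution $(X,Y)$ the element $\alpha = Y\sqrt{a}+X\sqrt{b}$ with conjugate $\bar\alpha = Y\sqrt{a}-X\sqrt{b}$, so that $\alpha\bar\alpha = aY^2 - bX^2 = 4(a-b)$. The element $\omega = (r+\sqrt{ab})/2$ is a unit, since $\omega\bar\omega = (r^2-ab)/4 = 1$, and $\omega>1$. A direct expansion gives $\omega\alpha = Y'\sqrt{a}+X'\sqrt{b}$ with $X' = (rX+aY)/2$ and $Y' = (rY+bX)/2$; using $r^2 \equiv ab \pmod{4}$ together with the equation itself, one checks that $(X',Y')$ is again an integer solution. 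Hence every orbit $\{\alpha\omega^n : n \in \mathbb{Z}\}$ consists entirely of integer solutions of the original equation.

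Next I would run a descent. Starting from any positive solution, apply $\omega^{-1}$ repeatedly until one more application either makes the $X$-coordinate nonpositive or flips the sign of $Y$; call the resulting pair $(x_0,y_0)$, where possibly $y_0 < 0$. Since $|\alpha\omega^{-n}|$ is strictly decreasing while $|\bar\alpha\omega^{-n}|$ is strictly increasing, and their product is the constant $4(b-a)$, the stopping index is unique and yields an explicit upper bound on $|\alpha_0|$ in terms of $\omega$ and $4(b-a)$. Solving $2x_0\sqrt{b} = \alpha_0-\bar\alpha_0$ and $2y_0\sqrt{a} = \alpha_0+\bar\alpha_0$, substituting $\omega^{-1} = (r-\sqrt{ab})/2$, and then using the exact identity $r^2-ab=4$ converts that bound into the stated inequalities $x_0 \leq \sqrt{a(b-a)/(r-2)}$ and $|y_0| \leq \sqrt{(r-2)(b-a)/a}$.

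The main obstacle is the precise bookkeeping in the final step: the factor $r-2$ in the denominators emerges only when the identity $r^2-ab=4$ is applied exactly rather than being approximated by $\omega < r$, so one must carry $\omega$ and $\omega^{-1}$ through the argument without loss. A secondary subtlety is the parity verification that $(X',Y')$ stays integral at each step of the descent, which is forced by the relation $r^2 \equiv ab \pmod 4$ together with the equation $aY^2-bX^2 = 4(a-b)$. Once these two points are settled, the descent is standard and exhibits every positive solution as arising from a unique fundamental $(x_0,y_0)$ in the stated range.
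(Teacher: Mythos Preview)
The paper does not actually prove this lemma: it is quoted as ``a part of Lemma~2 in \cite{dujram}'' and stated without argument. So there is no proof in the paper to compare your proposal against.

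That said, your outline is the standard one and is essentially how the cited result is established: one lets the unit $\omega=(r+\sqrt{ab})/2$ of norm $1$ act on $\alpha=Y\sqrt{a}+X\sqrt{b}$, observes that the norm $\alpha\bar\alpha=4(a-b)$ is preserved, and descends to the representative with smallest $|\alpha|$ in its orbit. The bounds on $x_0$ and $|y_0|$ then fall out of the pair of inequalities characterising the fundamental representative (for instance $1\le \alpha_0/\bar\alpha_0' <\omega^2$, or equivalently the condition that one more application of $\omega^{-1}$ pushes $X$ or $Y$ out of the admissible range), combined with $r^2-ab=4$. Your identification of the two delicate points---the exact emergence of the factor $r-2$, and the parity check that $(rX+aY)/2$ and $(rY+bX)/2$ remain integral---is accurate; the parity argument in particular requires a short case split on $r\bmod 2$ using $r^2=ab+4$ and the equation $aY^2-bX^2\equiv 0\pmod 4$, which you have flagged but not carried out. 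Once those details are written down the argument is complete.
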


By applying this Lemma to the equations (\ref{pelljdbaAB})-(\ref{pelljdbaCD}) we obtain
\begin{align}
Y\sqrt{a}+X\sqrt{b}&=Y_{h'}^{(a,b)}\sqrt{a}+X_{h'}^{(a,b)}\sqrt{b}=(Y_0\sqrt{a}+X_0\sqrt{b})\left(\frac{r+\sqrt{ab}}{2}\right)^{h'} \label{rjjdbaAB}\\
Z\sqrt{a}+X\sqrt{c}&=Z_{j'}^{(a,c)}\sqrt{a}+X_{j'}^{(a,c)}\sqrt{c}=(Z_1\sqrt{a}+X_1\sqrt{c})\left(\frac{s+\sqrt{ac}}{2}\right)^{j'}  \label{rjjdbaAC}\\
Z\sqrt{b}+Y\sqrt{c}&=Z_{k'}^{(b,c)}\sqrt{b}+Y_{k'}^{(b,c)}\sqrt{c}=(Z_2\sqrt{b}+Y_2\sqrt{c})\left(\frac{t+\sqrt{bc}}{2}\right)^{k'}  \label{rjjdbaBC}\\
W\sqrt{a}+X\sqrt{d}&=W_{l'}^{(a,d)}\sqrt{a}+X_{l'}^{(a,d)}\sqrt{d}=(W_3\sqrt{a}+X_3\sqrt{d})\left(\frac{x+\sqrt{ad}}{2}\right)^{l'}  \label{rjjdbaAD}\\
W\sqrt{b}+Y\sqrt{d}&=W_{m'}^{(b,d)}\sqrt{b}+Y_{m'}^{(b,d)}\sqrt{d}=(W_4\sqrt{b}+Y_4\sqrt{d})\left(\frac{y+\sqrt{bd}}{2}\right)^{m'} \label{rjjdbaBD}\\
W\sqrt{c}+Z\sqrt{d}&=W_{n'}^{(c,d)}\sqrt{c}+Z_{n'}^{(c,d)}\sqrt{d}=(W_5\sqrt{c}+Z_5\sqrt{d})\left(\frac{z+\sqrt{cd}}{2}\right)^{n'} \label{rjjdbaCD}
\end{align}
where $h',j',k',l',m',n'$ are nonnegative integers, and $Y_0$, $Y_2$, $Y_4$, $X_0$, $X_1$, $X_3$, $Z_1$, $Z_2$, $Z_5$, $W_3$, $W_4$, $W_5$ integers which satisfy appropriate inequalities from Lemma \ref{opcerjpellove}.\\
Each sequence of solutions can be expressed as a pair of binary recurrence sequences, so for example, a sequence of solutions $(Y_{h'}^{(a,b)},X_{h'}^{(a,b)})$ to equation (\ref{rjjdbaAB}) satisfy the following recursions:
\begin{align*}
Y_{0}^{(a,b)}=Y_0,&\quad Y_{1}^{(a,b)}=\frac{rY_0+bX_0}{2},\quad Y_{h'+2}^{(a,b)}=rY_{h'+1}^{(a,b)}-Y_{h'}^{(a,b)},\\
X_{0}^{(a,b)}=X_0,&\quad X_{1}^{(a,b)}=\frac{rX_0+aY_0}{2},\quad X_{h'+2}^{(a,b)}=rX_{h'+1}^{(a,b)}-X_{h'}^{(a,b)},
\end{align*}
which can easily be proven by induction. \\
We will now state and prove some lemmas about initial values of the sequences of solutions and about its indices $h',j',l',k',m',n'$.
\begin{lemma}{\cite[Lemma 3]{konacno_petorki}}\label{Wjepm2}
If $W=W_{l'}^{(a,d)}=W_{m'}^{(b,d)}=W_{n'}^{(c,d)}$, then we have $l'\equiv m'\equiv n' \equiv 0 \ (\bmod \ 2)$. Also, $$W_3=W_4=W_5=2 \varepsilon=\pm 2\quad \textrm{and} \quad X_3=Y_4=Z_5=2.$$
\end{lemma}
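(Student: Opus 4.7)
The strategy is to handle the three Pell equations \eqref{rjjdbaAD}--\eqref{rjjdbaCD} in parallel, using that the common value $W=\sqrt{de+4}$ must belong to each of the three families of solutions simultaneously.

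First I would pin down the fundamental solutions. Observe that $(X,W)=(2,\pm 2)$ solves $aW^2-dX^2=4(a-d)$ and, similarly, $(Y,W)=(2,\pm 2)$ and $(Z,W)=(2,\pm 2)$ solve the other two equations. Since $d>abc$ by Lemma \ref{granice_za_d_plus}, the ranges for fundamental data prescribed by Lemma \ref{opcerjpellove} comfortably contain these, so $(2,\pm 2)$ are legitimate candidates in each case. What still has to be ruled out is that a \emph{different} fundamental class produces $W$. I would combine the size bounds of Lemma \ref{opcerjpellove} with the congruences $W^2\equiv 4\pmod{d}$ (from the Pell equation itself) and $X^2\equiv 4\pmod{a}$ (from $ae+4=X^2$), and crucially with the constraint that the same $W$ has to fit into the $(b,d)$- and $(c,d)$-recurrences; any alternative fundamental class would, once iterated, fail to match $W$ in the remaining two sequences. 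This pins down $X_3=Y_4=Z_5=2$ and $W_3=W_4=W_5=2\varepsilon$ with a common sign $\varepsilon=\pm 1$, determined by $W\equiv 2\varepsilon\pmod{d}$.

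The parity claims $l'\equiv m'\equiv n'\equiv 0\pmod 2$ then follow by reducing the recurrences modulo $a$, $b$, and $c$ respectively. Starting from the fundamental values just determined and iterating $X_{h'+2}^{(a,d)}=xX_{h'+1}^{(a,d)}-X_{h'}^{(a,d)}$ modulo $a$, one checks that the congruence class of $X_{h'}^{(a,d)}$ is periodic of period four and that only even indices produce $X_{l'}^{(a,d)}$ consistent with $X^2=ae+4\equiv 4\pmod{a}$. Analogous reductions modulo $b$ and modulo $c$ yield the parities of $m'$ and $n'$.

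The main obstacle is the uniqueness of the fundamental class in the first step: Lemma \ref{opcerjpellove} by itself leaves room for spurious fundamental solutions besides $(2,\pm 2)$, and excluding them requires genuinely exploiting that $W$ lies in all three Pell families at once rather than one at a time. This is where I would follow the line of reasoning developed for the analogous $D(4)$-setup in \cite{konacno_petorki}, adapting it to track the three sign choices so that they collapse to the single $\varepsilon$ appearing in the statement.
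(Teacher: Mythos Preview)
The paper does not supply its own proof of this lemma: it is quoted verbatim from \cite[Lemma~3]{konacno_petorki} and used as a black box. So there is no in-paper argument to compare against, and your final paragraph---deferring to the reasoning in \cite{konacno_petorki}---is in effect exactly what the paper does.

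That said, the concrete parity argument you sketch does not work as written. You propose to reduce the recurrence for $X_{l'}^{(a,d)}$ modulo $a$ and detect the parity of $l'$ by testing $X^2\equiv 4\pmod a$. But with $X_0=2$ and $X_1=(xX_0+aW_0)/2\equiv x\pmod a$, the recurrence $X_{l'+2}=xX_{l'+1}-X_{l'}$ together with $x^2\equiv 4\pmod a$ gives the period-two pattern $2,\,x,\,2,\,x,\ldots$ modulo $a$ (not period four), and \emph{both} residues satisfy $X^2\equiv 4\pmod a$. The congruence $X^2\equiv 4\pmod a$ is in fact automatic for every solution of $aW^2-dX^2=4(a-d)$, so it cannot separate even from odd indices. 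The same objection applies to your proposed reductions modulo $b$ and $c$.

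The argument in \cite{konacno_petorki} instead works modulo the \emph{large} element $d$, which is common to all three Pell equations. One shows that $W_{l'}^{(a,d)}\pmod d$ has period two in $l'$ with residues $W_3$ and $\tfrac12 xW_3$, and similarly for the other two sequences; equating the three residues modulo $d$ and invoking the size bounds of Lemma~\ref{opcerjpellove} (all fundamental $|W_i|$ are far below $d$, since $d>abc$) turns the congruences into genuine equalities. The odd-index alternatives are then eliminated, and the resulting equalities $W_3=W_4=W_5$ together with the three Pell relations and the bounds force $X_3=Y_4=Z_5=2$, $W_3=W_4=W_5=\pm 2$. Your first paragraph already gestures at this with ``determined by $W\equiv 2\varepsilon\pmod d$''; the fix is simply to run the parity step modulo $d$ as well, rather than modulo $a,b,c$.
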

 In the next lemma we will prove a similar result about remaining indices and initial values of sequences. Proof defers from the one in \cite{petorke} so we give it in detail.
\begin{lemma}
We have $h'\equiv j'\equiv k' \equiv 0 \ (\bmod \ 2) $ and
$$X_0=X_1=Y_0=Y_2=Z_1=Z_2=2.$$
\end{lemma}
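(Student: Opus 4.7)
The argument parallels the proof of Lemma \ref{Wjepm2} in \cite{konacno_petorki}, now carried out on each of the three Pell equations (\ref{pelljdbaAB})--(\ref{pelljdbaBC}) in turn. Throughout let $g_{ab} = \gcd(a,b)$, $g_{ac} = \gcd(a,c)$, $g_{bc} = \gcd(b,c)$; each divides $4$ since $r^2 - ab = s^2 - ac = t^2 - bc = 4$.

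I would first pin down the initial values. Reducing (\ref{pelljdbaAB}) modulo $a$ gives $b(X_0^2 - 4) \equiv 0 \pmod a$, hence $(a/g_{ab}) \mid (X_0^2 - 4)$; equivalently $e' := (X_0^2 - 4)/a$ is a nonnegative integer for which $\{a,b,e'\}$ is a $D(4)$-triple (when $e' > 0$). Combining Lemma \ref{opcerjpellove} with the trichotomy from Lemma \ref{c_granice} leaves only two candidates: $e' = 0$, giving $X_0 = Y_0 = 2$, and $e' = a+b-2r$, giving $(X_0, |Y_0|) = (r-a, b-r)$, whose product $(r-a)(b-r) = r(a+b) - 2ab - 4$ far exceeds $b - a$ for $b > 10^5$, violating the product bound $X_0 \cdot |Y_0| \leq b - a$ coming from Lemma \ref{opcerjpellove}. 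Thus $X_0 = Y_0 = 2$. The same analysis applied to (\ref{pelljdbaAC}) and (\ref{pelljdbaBC}) yields $X_1 = Z_1 = Y_2 = Z_2 = 2$.

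With the initial values fixed, the parities of $h', j', k'$ follow from short modular considerations. Since $r^2 \equiv 4 \pmod{a/g_{ab}}$ and $X_0 = Y_0 = 2$, the recursion $X_{h'+2} = r X_{h'+1} - X_{h'}$ admits an explicit closed form modulo $a/g_{ab}$ whose dependence on $h' \pmod 2$ can be read off. Matching with the analogous closed form for $X_{j'}^{(a,c)} \pmod{a/g_{ac}}$, via $X_{h'}^{(a,b)} = X_{j'}^{(a,c)} = X$, forces $h' \equiv j' \pmod 2$. Parallel matchings modulo $b$ (for the two $Y$-sequences) and modulo $c$ (for the two $Z$-sequences) give $h' \equiv k'$ and $j' \equiv k'$. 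The common parity is then pinned down to even by comparing with the explicit expression $X = (at + rs)/2$ forced by $d = d_+$, whose residue modulo a suitable divisor of $a$ matches only the even-$h'$ branch.

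The main technical nuisance will be the division by $2$ in the recursion and in the expressions $(rX_0 + aY_0)/2$ when any of $a, b, c$ is even. This is handled, as in \cite{ireg_pro, konacno_petorki}, by a short case distinction on the values of $g_{ab}, g_{ac}, g_{bc} \in \{1,2,4\}$; in each subcase the congruences are read off modulo an appropriate divisor of $a/g_{ab}$, $b/g_{bc}$, and $c/g_{ac}$.
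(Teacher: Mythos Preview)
Your argument has a concrete error in the parity step: the formula $(at+rs)/2$ equals $x=\sqrt{ad+4}$, not $X=\sqrt{ae+4}$; the relation $d=d_+$ fixes $x,y,z$ but says nothing directly about the unknown $X$ you are trying to pin down. So the anchor you use to break the parity symmetry is simply not available, and the chain of congruences you set up among $h',j',k'$ (even if it were made precise) only yields $h'\equiv j'\equiv k'\pmod 2$ without determining the common residue. There are also gaps earlier: from $a(Y_0^2-4)=b(X_0^2-4)$ you only obtain $(a/g_{ab})\mid(X_0^2-4)$, so the integrality of $e'=(X_0^2-4)/a$ is unjustified when $g_{ab}>1$; and Lemma~\ref{c_granice} constrains the \emph{largest} element of a triple, whereas your $e'$ is bounded above by $(b-a)/(r-2)$ and hence is small, so the reduction to the two candidates $e'\in\{0,\,a+b-2r\}$ is not explained.

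The paper takes a different route that supplies the missing anchor. It couples equation~(\ref{pelljdbaAB}) not with (\ref{pelljdbaAC}) or (\ref{pelljdbaBC}) but with (\ref{pelljdbaBD}), exploiting Lemma~\ref{Wjepm2}: since $m'$ is already known to be even and $Y_4=2$, one gets $Y_{m'}^{(b,d)}\equiv 2\pmod b$. Comparing with $Y_{h'}^{(a,b)}\pmod b$ and assuming $h'$ odd leads to $\tfrac12(rY_0+bX_0)\equiv 2\pmod b$; a delicate size analysis of $(bX_0-rY_0)(bX_0+rY_0)=4b(b-a)-4Y_0^2$, using $b\ge a+57\sqrt a$ from Lemma~\ref{b_a_57sqrt}, then rules out both signs of $Y_0$. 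Thus $h'$ is even, whence $Y_0\equiv 2\pmod b$ together with $|Y_0|<b^{3/4}$ forces $Y_0=2$ and then $X_0=2$. The remaining pairs $(j',X_1,Z_1)$ and $(k',Y_2,Z_2)$ are handled the same way, pairing (\ref{pelljdbaAC}) and (\ref{pelljdbaBC}) with (\ref{pelljdbaCD}) and using $c\ge a+b+2r$ in place of Lemma~\ref{b_a_57sqrt}. In short, the even parity of $l',m',n'$ from Lemma~\ref{Wjepm2} is the input that your approach lacks.
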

\begin{proof}
Let us consider the system of the equations (\ref{pelljdbaAB}) and (\ref{pelljdbaBD}). \\
From Lemma \ref{opcerjpellove} we have the bound on $Y_0$,
$|Y_0|<b^{3/4}a^{-1/4}$,
and since $Y_{h'}^{(a,b)}$ satisfy recursion
$$Y_{0}^{(a,b)}=Y_0,\quad Y_{1}^{(a,b)}=\frac{rY_0+bX_0}{2},\quad Y_{h'+2}^{(a,b)}=rY_{h'+1}^{(a,b)}-Y_{h'}^{(a,b)},$$
we easily see that
$$Y_{h'}^{(a,b)}\equiv
\begin{cases}
Y_0^{(a,b)}(\bmod \ b),&\ h' \text{ even} ,\\
Y_1^{(a,b)}(\bmod \ b),&\ h' \text{ odd}.
\end{cases}$$
On the other hand, for $Y_{m'}^{(b,d)}$, from Lemma \ref{Wjepm2}, we have
$$Y_{0}^{(b,d)}=Y_4=2,\quad Y_{1}^{(b,d)}=y+\varepsilon b,\quad Y_{m'+2}^{(b,d)}=yY_{m'+1}^{(b,d)}-Y_{m'}^{(b,d)}$$
and since we know that $m'$ is even, we obtain $Y_{m'}^{(b,d)}\equiv 2 (\bmod \ b) .$\\
We consider $Y_{h'}^{(a,b)}=Y_{m'}^{(b,d)}$ and let us assume that $h'$ is odd.
Then
$$
\frac{1}{2}(rY_0+bX_0)\equiv 2 (\bmod \ b)
$$
and since $bX_0\equiv 0 (\bmod \ b)$, after subtracting the first congruence equation from the second we have $\frac{1}{2}(bX_0-rY_0)\equiv -2 (\bmod \ b)$.
Now, we observe
\begin{align*}
\nonumber (bX_0-rY_0)(bX_0+rY_0)&=b^2X_0^2-r^2Y_0^2=b(aY_0^2+4(b-a))-abY_0^2-4Y_0^2\\
&=4b(b-a)-4Y_0^2.
\end{align*}
Since $|Y_0|<b^{3/4}a^{-1/4}$, we have
$$4b(b-a)-4Y_0^2>4b(b-a)-4b^{3/2}a^{-1/2}=4b(b-a-(b/a)^{1/2}),$$
and since the right hand side is increasing in $b$, and from Lemma \ref{b_a_57sqrt}\@ we know that $b \geq a+57\sqrt{a}$, we get
$$b-a-(b/a)^{1/2}>57\sqrt{a}-\left( 1+\frac{57}{\sqrt{a}}\right)^{1/2}>0.$$
So we can conclude that $bX_0-r|Y_0|>0$. On the other hand, we can easily see that $b^2X_0^2-r^2Y_0^2<4b^2$, i.e. $\frac{1}{2}(bX_0-r|Y_0|)<b$.\par
Now, let us consider separately these cases: \\
$1.)$ If $Y_0>0$, then $\frac{1}{2}(bX_0-r|Y_0|)=\frac{1}{2}(bX_0-rY_0)$ so we must have $\frac{1}{2}(bX_0-rY_0)=b-2$. Observe that
\begin{align*}
bX_0+rY_0&<\frac{4b^2}{bX_0-rY_0}=\frac{4b^2}{2b-4}\\
&=2b+\frac{8b}{2b-4}=2b+4+\frac{16}{2b-4}<2b+4.1.
\end{align*}
Since $b>10^5$ implies $r>316$ and both addends on the right hand side of the inequality are positive, the only options for $X_0$ are $X_0=1$ and  $X_0=2$.
If $X_0=1$, by direct computation we can see that there is no $Y_0$ in the bounds given by Lemma \ref{opcerjpellove} that satisfy equation (\ref{pelljdbaAB}). For $X_0=2$ we get only $Y_0=2$. But, then we would have $\frac{1}{2}(2b-2r)=b-r=b-2$, i.e.\@ $r=2$ which cannot be.
\smallskip \par
$2.)$ If $Y_0<0$, then $\frac{1}{2}(bX_0-r|Y_0|)=\frac{1}{2}(bX_0+rY_0)$ so we have $\frac{1}{2}(bX_0+rY_0)=2$.
 This implies $$bX_0-rY_0<2bX_0<2b\sqrt{\frac{a(b-a)}{r-2}}<2b\sqrt{b},$$ since $a<r-2$ (otherwise we would get $b\leq a+4$, which is in a contradiction with Lemma \ref{b_a_57sqrt}).
We also have that $4b(b-a)-4Y_0^2>4b^2-4ab-4b\sqrt{b}=4b(b-a-\sqrt{b}),$ since $Y_0^2<b^{3/2}$, so we can conclude
$$4=bX_0+rY_0>\frac{4b(b-a-\sqrt{b})}{bX_0-rY_0}>\frac{4b(b-a-\sqrt{b})}{2b\sqrt{b}}=\frac{2}{\sqrt{b}}(b-a-\sqrt{b}),$$
i.e.\@ $\sqrt{b}-1-\frac{a}{\sqrt{b}}<2$.\\
After squaring this expression and solving quadratic equation in $b$ we get $b<a+\frac{3}{2}(\sqrt{4a+9}+3)$. Again, by Lemma \ref{b_a_57sqrt} we also have $b\geq a+57\sqrt{a}$, and from these two inequalities we would get $a<1$, a contradiction.
\par
Hence, $h'$ must be even. From $Y_0 \equiv 2 (\bmod \ b)$ and $|Y_0|<b^{3/4}$ we conclude $Y_0=2$ and by direct computation from (\ref{pelljdbaAB}) we also get $X_0=2$.
\par
Now, we consider a system of equations (\ref{pelljdbaAC}) and (\ref{pelljdbaCD}). The proof is very similar to the previous system, so we omit details and only emphasize that here we use $c\geq a+b+2r$ to get a contradiction in the case that $j'$ is odd.
The same is used to prove that $k'$ is even when we consider the system of the equations (\ref{pelljdbaBC}) and (\ref{pelljdbaCD}).
\end{proof}

From the previous lemmas we see that equations (\ref{rjjdbaAB})-(\ref{rjjdbaCD}) actually have form:
\begin{align}
Y\sqrt{a}+X\sqrt{b}&=(2\sqrt{a}+2\sqrt{b})\left(\frac{r+\sqrt{ab}}{2}\right)^{2h}, \label{parnirjjdbaAB}\\
Z\sqrt{a}+X\sqrt{c}&=(2\sqrt{a}+2\sqrt{c})\left(\frac{s+\sqrt{ac}}{2}\right)^{2j},  \label{parnirjjdbaAC}\\
Z\sqrt{b}+Y\sqrt{c}&=(2\sqrt{b}+2\sqrt{c})\left(\frac{t+\sqrt{bc}}{2}\right)^{2k},  \label{parnirjjdbaBC}\\
W\sqrt{a}+X\sqrt{d}&=(2\varepsilon\sqrt{a}+2\sqrt{d})\left(\frac{x+\sqrt{ad}}{2}\right)^{2l},  \label{parnirjjdbaAD}\\
W\sqrt{b}+Y\sqrt{d}&=(2\varepsilon\sqrt{b}+2\sqrt{d})\left(\frac{y+\sqrt{bd}}{2}\right)^{2m}, \label{parnirjjdbaBD}\\
W\sqrt{c}+Z\sqrt{d}&=(2\varepsilon\sqrt{c}+2\sqrt{d})\left(\frac{z+\sqrt{cd}}{2}\right)^{2n}. \label{parnirjjdbaCD}
\end{align}

\section{Gap principle and classical congruences}
We have already observed, if there exist nonnegative integer $e$ such that the $D(4)$-quadruple $\{a,b,c,d\}$ can be extended to the quintuple $\{a,b,c,d,e\}$ then the equalities (\ref{parnirjjdbaAB})-(\ref{parnirjjdbaCD}) are satisfied for some nonnegative integers $(h,j,k,l,m,n)$.
We will now state and prove which relations hold between these indices, but first we will state without proof some known relations.
\begin{lemma}\cite[Lemma 5]{fil_xy4}\label{odnosi_indeksa_cetvorke}
If $Z=Z_{2j}^{(a,c)}=Z_{2k}^{(b,c)}$, then $k-1\leq j \leq 2k+1.$
\end{lemma}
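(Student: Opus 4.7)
The plan is to derive closed-form expressions for $Z_{2j}^{(a,c)}$ and $Z_{2k}^{(b,c)}$, compare their sizes by taking logarithms, and pin $j$ inside the asserted interval via the relative growth rates of $\alpha=(s+\sqrt{ac})/2$ and $\beta=(t+\sqrt{bc})/2$. Conjugating (\ref{parnirjjdbaAC}) under $\sqrt{c}\mapsto-\sqrt{c}$ (which sends $\alpha$ to $\alpha^{-1}$, since $\alpha\cdot(s-\sqrt{ac})/2=(s^2-ac)/4=1$) and adding the resulting equation to the original yields
$$Z_{2j}^{(a,c)} = (1+\sqrt{c/a})\,\alpha^{2j} + (1-\sqrt{c/a})\,\alpha^{-2j},$$
and analogously $Z_{2k}^{(b,c)} = (1+\sqrt{c/b})\,\beta^{2k} + (1-\sqrt{c/b})\,\beta^{-2k}$. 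For $j,k\ge 1$ the second summands are negligible against the first, so equating these two expressions and taking logarithms gives
$$2j\log\alpha - 2k\log\beta = \log\frac{1+\sqrt{c/b}}{1+\sqrt{c/a}} + \eta,$$
with $|\eta|$ small; the base cases $j=k=0$ reduce to $Z=2$ and satisfy the conclusion trivially.

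For the upper bound $j\le 2k+1$, the key inequality is $\beta\le\alpha^2$, equivalently $\log\beta/\log\alpha\le 2$. Direct expansion gives $\alpha^2=(ac+2+s\sqrt{ac})/2$ and $\beta=(t+\sqrt{bc})/2$; since $a\ge 1$ and $b<c$ imply $a^2c+4a\ge b$, one checks separately that $s\sqrt{ac}\ge\sqrt{bc}$ and $ac+2\ge t$, so $\alpha^2\ge\beta$. Combined with the displayed equation (whose right-hand side is negative, as $\sqrt{c/b}<\sqrt{c/a}$), this forces $j\le k(\log\beta/\log\alpha)+|\eta|/(2\log\alpha)\le 2k+1$.

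For the lower bound $j\ge k-1$, rearrange the same equation as
$$2(j-k)\log\alpha = 2k(\log\beta-\log\alpha) - \log\frac{1+\sqrt{c/a}}{1+\sqrt{c/b}} + \eta.$$
The first right-hand summand is non-negative since $\beta>\alpha$, and the logarithm is bounded above by its asymptotic value $\tfrac12\log(b/a)$, which is less than $2\log\alpha$ because $\log\alpha\ge\tfrac12\log(ac)\ge\tfrac12\log b$. Hence $j-k>-1$, giving $j\ge k-1$. The main obstacle will be making the error term $\eta$ quantitatively small enough that the continuous inequalities really deliver the exact integer bounds $k-1$ and $2k+1$ rather than weaker ones; this requires careful tracking of the corrections $\alpha^{-2j}$ and $\beta^{-2k}$ together with the ratio of leading coefficients, and separate verification of the small-$k$ cases $k\in\{0,1\}$ directly from the recursions.
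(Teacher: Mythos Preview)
The paper does not actually prove this lemma: it is quoted verbatim from \cite[Lemma~5]{fil_xy4} and no argument is supplied here, so there is nothing in the present paper to compare your proposal against line by line.

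That said, your approach is the standard one and matches in spirit what the original reference does: solve the binary recurrences in closed form, equate the two expressions for $Z$, and bound the ratio $j/k$ via $1\le\log\beta/\log\alpha\le 2$. Your verification of $\alpha^{2}\ge\beta$ via the equivalent inequality $a^{2}c+4a\ge b$ is correct, as is the observation that $\beta>\alpha$ follows immediately from $b>a$. The bound $\log\frac{1+\sqrt{c/a}}{1+\sqrt{c/b}}<\tfrac12\log(b/a)<\log\alpha$ is also correct (and in fact you get something a bit sharper than the $2\log\alpha$ you write, which would already yield $j\ge k-1$).

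The only genuine gap is the one you flag yourself: you have not actually bounded the correction term $\eta$, and the proposal ends with an explicit acknowledgement that the small-$k$ cases and the quantitative control of $\eta$ remain to be done. These are routine---for $j,k\ge 1$ the error is of order $\alpha^{-4j}+\beta^{-4k}$, far below $\log\alpha$, and $k=0$ forces $Z=2$ hence $j=0$---but as written the argument is a sketch rather than a complete proof. If you intend this as a self-contained replacement for the citation, you should fill in those two points; otherwise simply citing \cite{fil_xy4} as the paper does is sufficient.
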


\begin{lemma}\cite[Lemma 4]{konacno_petorki}\label{odnosim_n_l}
If $W=W_{2l}^{(a,d)}=W_{2m}^{(b,d)}=W_{2n}^{(c,d)}$,  then $8\leq n\leq m\leq l\leq 2n.$
\end{lemma}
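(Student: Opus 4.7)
The plan is to derive Binet-type closed forms for the three sequences and then exploit the fact that they grow at different rates because $a<b<c<d$. Setting $\alpha_u=(x_u+\sqrt{ud})/2$ for $(u,x_u)\in\{(a,x),(b,y),(c,z)\}$, equations (\ref{parnirjjdbaAD})--(\ref{parnirjjdbaCD}) yield, after adding the defining identity to its conjugate and solving for $W$,
$$W_{2k}^{(u,d)}=\varepsilon\bigl(\alpha_u^{2k}+\alpha_u^{-2k}\bigr)+\sqrt{d/u}\,\bigl(\alpha_u^{2k}-\alpha_u^{-2k}\bigr),$$
so $W_{2k}^{(u,d)}\sim\sqrt{d/u}\,\alpha_u^{2k}$ with $\alpha_u^2=(ud+2+x_u\sqrt{ud})/2\approx ud$ once $ud$ is large. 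The conjugate roots $\alpha_u^{-2k}$ are exponentially small and will be carried only as error terms.

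For the chain $n\le m\le l$, I would equate the three asymptotic expressions for the common value $W$ and take logarithms, obtaining
$$2l\log\alpha_a\;\approx\;2m\log\alpha_b\;\approx\;2n\log\alpha_c.$$
Since $a<b<c$ forces $\alpha_a<\alpha_b<\alpha_c$, the monotonicity $l\ge m\ge n$ is immediate, and the exponentially small $\alpha_u^{-2k}$ contributions can be absorbed into a genuine inequality with negligible loss. For the upper bound $l\le 2n$, the same identity gives $l/n\approx\log\alpha_c^2/\log\alpha_a^2=\log(cd)/\log(ad)+o(1)$, so it suffices that $cd<(ad)^2$, i.e.\ $c<a^2d$; this holds trivially from $a\ge 1$ and $c<d$, with plenty of slack to absorb the error.

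The delicate step is $n\ge 8$. Starting from the extension condition $e>d$, we have $W^2=de+4>d^2$, so $W>d$; a stronger gap principle in the spirit of Lemma \ref{granice_za_d_plus} and the $D(1)$ arguments in \cite{petorke} upgrades this to a superlinear lower bound $e\gg\phi(a,b,c,d)$ using the structure of the $W$-recursion and the fact that $d=d_+>abc+c$. Feeding this into $W\sim\sqrt{d/c}\,\alpha_c^{2n}$ converts the lower bound on $e$ into
$$n\;\gtrsim\;\frac{\log(W\sqrt{c/d})}{\log(cd)},$$
and combining with $b>10^5$ (Lemma \ref{nas_rez_granice}), $b\ge a+57\sqrt{a}$ (Lemma \ref{b_a_57sqrt}), and $c\ge a+b+2r$ pushes the right-hand side past $7$. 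Any residual small indices $n\in\{1,\ldots,7\}$ are then eliminated one at a time by substituting each $n$ into (\ref{parnirjjdbaCD}), computing $W$ explicitly, and contradicting the required size of $de+4$ or the joint identity $W_{2n}^{(c,d)}=W_{2m}^{(b,d)}$ already constrained by Lemma \ref{odnosi_indeksa_cetvorke}.

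The main obstacle is the lower bound $n\ge 8$: it rests on a sharp quantitative gap principle for $e$ together with careful handling of a few borderline indices, whereas both the ordering $n\le m\le l$ and the factor-of-two bound $l\le 2n$ fall out cleanly from the Binet representation and an elementary growth-rate comparison.
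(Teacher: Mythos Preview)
Note first that the paper does not itself prove this lemma; it is quoted verbatim from \cite[Lemma~4]{konacno_petorki}. Your Binet-form growth comparison for the chain $n\le m\le l\le 2n$ is the standard argument and matches what the cited source does: once the closed forms are in hand, the ordering of the bases $\alpha_a<\alpha_b<\alpha_c$ forces the ordering of the exponents, and the observation $cd<(ad)^2$ (immediate from $c<d$ and $a\ge 1$, with the enormous slack $d>abc$) rules out $l\ge 2n+1$. That part is fine.

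The gap is your treatment of $n\ge 8$. The bound $W>d$ that you derive from $e>d$ yields only $n\ge 1$, and invoking an unspecified ``stronger gap principle in the spirit of Lemma~\ref{granice_za_d_plus}'' together with a promised case-by-case elimination of $n\in\{1,\dots,7\}$ is not a proof---the quadruple $\{a,b,c,d\}$ is generic, so you cannot dispose of each small $n$ by a single numerical check. In \cite{konacno_petorki}, and in the refinements used later in this paper (see for instance the line $2l>0.61803\,d^{1/4}$ quoted from \cite{nas} in the proof of Lemma~\ref{odnos_m_i_l}), the lower bound on the indices comes from the congruences of Lemma~\ref{kongruencije}: one first excludes the possibility that $a\varepsilon l^2+xl=c\varepsilon n^2+zn$ holds as an equality of integers, which then forces the difference to be at least $d$ in absolute value, and hence $cn^2+zn\gtrsim d$; combined with $d>abc$ this pushes $n$ past any fixed constant. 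Your sketch does not supply this mechanism. Finally, your closing appeal to Lemma~\ref{odnosi_indeksa_cetvorke} is misplaced: that lemma concerns the $Z$-sequences attached to the triple $\{a,b,c\}$, not the $W$-sequences modulo $d$ that are relevant here.
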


As we can see, so far no one  has considered relations between $h$ and other indices. We will now prove which relation holds between $m$ and $h$ and improve the relation between $m$  and $l$.

\begin{lemma}\label{odnos_m_i_l}
We have $2l\leq 3m$ and $m<l$, for $m\geq 2$.
\end{lemma}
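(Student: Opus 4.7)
The plan is to exploit the two closed-form expressions for the single positive integer $W$ coming from (\ref{parnirjjdbaAD}) and (\ref{parnirjjdbaBD}). Setting $\alpha_1=(x+\sqrt{ad})/2$ and $\alpha_2=(y+\sqrt{bd})/2$, whose $\sqrt{d}\mapsto-\sqrt{d}$-conjugates are $\alpha_1^{-1}$ and $\alpha_2^{-1}$ (since $x^2-ad=y^2-bd=4$), conjugation yields
\[W=(\varepsilon+\sqrt{d/a})\alpha_1^{2l}+(\varepsilon-\sqrt{d/a})\alpha_1^{-2l}=(\varepsilon+\sqrt{d/b})\alpha_2^{2m}+(\varepsilon-\sqrt{d/b})\alpha_2^{-2m}.\]
The inverse-power terms have absolute value at most $1$, so equating the two expressions and dividing gives
\[\alpha_1^{2l}=R_\varepsilon\,\alpha_2^{2m}+\theta,\qquad R_\varepsilon:=\frac{\varepsilon+\sqrt{d/b}}{\varepsilon+\sqrt{d/a}},\qquad|\theta|\leq 3.\]
By Lemma~\ref{granice_za_d_plus} and $c>b$ we have $d>ab^2$, and by Lemma~\ref{nas_rez_granice} $b>10^5$. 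Consequently $R_\varepsilon=\sqrt{a/b}\,(1+O(\sqrt{b/d}))<1$, with truly negligible correction.

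The easier bound is $2l\leq 3m$: if $2l\geq 3m+1$ then
\[\alpha_1\cdot\left(\frac{\alpha_1^3}{\alpha_2^2}\right)^{\!m}=\frac{\alpha_1^{3m+1}}{\alpha_2^{2m}}\leq\frac{\alpha_1^{2l}}{\alpha_2^{2m}}<2.\]
However $\alpha_1>\sqrt{ad}>ab\geq b>10^5$, and the elementary bounds $\alpha_1^2>ad$, $\alpha_2^2<bd+2$, together with $a^3d\geq a^4b^2\geq b^2$ (from $d>ab^2$ and $a\geq 1$), yield $\alpha_1^3/\alpha_2^2>1$. The left-hand side therefore exceeds $10^5$, a contradiction.

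For $m<l$ when $m\geq 2$, I would assume $l=m$, giving $(\alpha_1/\alpha_2)^{2m}=R_\varepsilon+O(\alpha_2^{-2m})$. Taking logarithms and Taylor-expanding, using $\alpha_1^2=ad+O(1)$ and $\alpha_2^2=bd+O(1)$, one finds
\[2m\log(\alpha_1/\alpha_2)=-m\log(b/a)+O(m/(ad)),\quad \log R_\varepsilon=-\tfrac12\log(b/a)+\varepsilon\tfrac{\sqrt{b}-\sqrt{a}}{\sqrt{d}}+O(b/d),\]
which, upon matching, yields
\[\bigl(m-\tfrac12\bigr)\log(b/a)=-\varepsilon(\sqrt{b}-\sqrt{a})/\sqrt{d}+O(m/(ad))+O(\alpha_2^{-2m}).\]
For $\varepsilon=+1$ the two sides have opposite signs, an immediate contradiction. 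For $\varepsilon=-1$ both sides are positive: the left is at least $(m-\tfrac12)\cdot 57\sqrt{a}/b$ via $\log(b/a)\geq(b-a)/b$ and Lemma~\ref{b_a_57sqrt}, while the right is at most $(b-a)/\sqrt{bd}$ plus smaller terms, and $d>ab^2$, $b>10^5$ force the left to overwhelm the right.

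The main obstacle is the $\varepsilon=-1$ sub-case of the second step, where the contradiction is purely a size estimate: one must verify that the error terms $O(m/(ad))$ and $O(\alpha_2^{-2m})$ remain strictly below the leading main-term difference uniformly over all admissible $(a,b,c,d)$, which is precisely where Lemma~\ref{b_a_57sqrt} (for a lower bound on $b-a$) together with $d>ab^2$ become essential.
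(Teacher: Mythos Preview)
Your argument for $2l\leq 3m$ is essentially the paper's: both extract $\alpha_1^{2l}$ and $\alpha_2^{2m}$ from the closed form for $W$, reduce (under $2l\geq 3m+1$) to $\alpha_1^3\lesssim\alpha_2^2$, and contradict this via $d>ab^2$. One slip: $\alpha_2^2<bd+4$, not $bd+2$, but this is harmless.

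For $m<l$ the two routes diverge. The paper shows $\sqrt{b/a}<(\alpha_2/\alpha_1)^2$ and $(\alpha_2/\alpha_1)^{2m}<\sqrt{b/a}\cdot\frac{\sqrt{B_0}+1}{\sqrt{B_0}-1}$, hence $(b/a)^{(m-1)/2}<\frac{\sqrt{B_0}+1}{\sqrt{B_0}-1}$; it then imports three external facts---$2l>0.618\,d^{1/4}$ from \cite{nas}, the resulting $m>48$, and the global bound $b<10^{36}$ from \cite{nas}---and bootstraps $B_0$ upward until it exceeds $10^{36}$. Your route Taylor-expands $\log(\alpha_1/\alpha_2)$ and $\log R_\varepsilon$ to isolate $(m-\tfrac12)\log(b/a)$ against terms of size $O(1/\sqrt{d})$, then uses only $m\geq 2$, $b\geq a+57\sqrt a$, and $d>ab^2$. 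This is more self-contained: it avoids $m>48$ and $b<10^{36}$ altogether. The cost is that you must make the $O$-constants explicit rather than hiding them behind iteration; you correctly flag this, and the bounds do close (the key comparison $(m-\tfrac12)\log(b/a)\gtrsim 57/(\sqrt a+57)$ versus $1/\sqrt{ab}$ is comfortable for $b>10^5$). One caution: your $\varepsilon=+1$ case is not literally a sign contradiction, since the error terms could in principle make the right-hand side positive; the real contradiction is one of magnitude, and the same size estimate you need for $\varepsilon=-1$ covers it.
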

\begin{proof}
From (\ref{rjjdbaAD}) and (\ref{rjjdbaBD}) by expressing solutions explicitly we have
$$W_l^{(a,d)}=\frac{d+\varepsilon\sqrt{ad}}{\sqrt{ad}}\left(\frac{x+\sqrt{ad}}{2} \right)^l+ \frac{-d+\varepsilon\sqrt{ad}}{\sqrt{ad}}\left( \frac{x-\sqrt{ad}}{2} \right)^l,$$
and
$$W_m^{(b,d)}=\frac{d+\varepsilon\sqrt{bd}}{\sqrt{bd}}\left(\frac{y+\sqrt{bd}}{2} \right)^m+ \frac{-d+\varepsilon\sqrt{bd}}{\sqrt{bd}}\left( \frac{y-\sqrt{bd}}{2} \right)^m.$$
\par
Firstly, let us prove that $2l \leq 3m$ by observing that we must have $W_{2l}^{(a,d)}=W_{2m}^{(b,d)}$. \\
Notice that $x-\sqrt{ad}<1$, which implies that also $\left( \frac{x-\sqrt{ad}}{\sqrt{ad}} \right)^{2m}<1$ and since $-d+\varepsilon \sqrt{ad}<0$, we have
$$\frac{-d+\varepsilon \sqrt{ad}}{\sqrt{ad}}\left( \frac{x-\sqrt{ad}}{\sqrt{ad}} \right)^{2m}>\frac{-d+\varepsilon \sqrt{ad}}{\sqrt{ad}}\geq \frac{-d-\sqrt{ad}}{\sqrt{ad}}.$$
Moreover, notice that the second addend in the expressions for $W_l^{(a,d)}$ and $W_m^{(b,d)}$, respectively, is negative since  $d>b>a$, i.e.\@ $d>\sqrt{bd}>\sqrt{ad}$. Now, it is easy to see that
\begin{align*}
\frac{d+\varepsilon\sqrt{ad}}{\sqrt{ad}}\left(\frac{x+\sqrt{ad}}{2} \right)^{2l}- \frac{d+\sqrt{ad}}{\sqrt{ad}}&<W_{2l}^{(a,d)}=W_{2m}^{(b,d)}<\\
&<\frac{d+\varepsilon\sqrt{bd}}{\sqrt{bd}}\left(\frac{y+\sqrt{bd}}{2} \right)^{2m}.
\end{align*}
On the other hand,
$$\frac{d+\sqrt{ad}}{\sqrt{ad}}=1+\frac{d}{\sqrt{ad}}<\left(\frac{y+\sqrt{bd}}{2} \right)^2<\left(\frac{y+\sqrt{bd}}{2} \right)^{2m},$$
so we get the inequality
\begin{align*}
&\frac{d+\varepsilon\sqrt{ad}}{\sqrt{ad}}\left(\frac{x+\sqrt{ad}}{2} \right)^{2l}
<\left(\frac{d+\varepsilon\sqrt{bd}}{\sqrt{bd}}+1\right)\left(\frac{y+\sqrt{bd}}{2} \right)^{2m}\\
&\implies \left(\frac{x+\sqrt{ad}}{2} \right)^{2l}
<\sqrt{\frac{a}{b}} \cdot \frac{d+(\varepsilon+1)\sqrt{bd}}{d+\varepsilon\sqrt{ad}}\left(\frac{y+\sqrt{bd}}{2} \right)^{2m}.
\end{align*}
From $\sqrt{\frac{a}{b}}(d+(\varepsilon+1)\sqrt{bd})\leq \sqrt{\frac{a}{b}}(d+2\sqrt{bd})= \sqrt{\frac{a}{b}}d+2\sqrt{ad}<d+2\sqrt{ad}$ we have
$$\left(\frac{x+\sqrt{ad}}{2} \right)^{2l}
<\frac{d+2\sqrt{ad}}{d+\varepsilon\sqrt{ad}}\left(\frac{y+\sqrt{bd}}{2} \right)^{2m}.$$
Assume now the opposite, i.e. that $2l\geq 3m+1$. Then, we have
$$\left(\frac{x+\sqrt{ad}}{2} \right)^{3m+1}
<\frac{d+2\sqrt{ad}}{d+\varepsilon\sqrt{ad}}\left(\frac{y+\sqrt{bd}}{2} \right)^{2m}$$
and the inequality $\frac{x+\sqrt{ad}}{2}>\frac{d+2\sqrt{ad}}{d+\varepsilon\sqrt{ad}}$ implies
$$ \left(\frac{x+\sqrt{ad}}{2} \right)^{3}
<\left(\frac{y+\sqrt{bd}}{2} \right)^{2}.$$
Since $x+\sqrt{ad}>2\sqrt{ad}$ and $\sqrt{bd+4}<\sqrt{bd}+\frac{2}{\sqrt{bd}}$, we have $y+\sqrt{bd}<2\sqrt{bd}+\frac{2}{\sqrt{bd}}<2\sqrt{bd}\left( 1+\frac{1}{bd} \right)<2\sqrt{bd}\left( 1+\frac{1}{B_0^3} \right)$  where $B_0<b$. Now we get to observe the inequality
$$(\sqrt{ad})^3<(\sqrt{bd})^2\left( 1+\frac{1}{B_0^3} \right)^2$$
and after squaring, inserting $abc<d$ and canceling we get
$$a^4c<b\left( 1+\frac{1}{B_0^3} \right)^4.$$
For $B_0=10^5$, we see that the inequality cannot be true for $a>1$ or for $c>4b$. It remains to observe the case where $a=1$ and $c=a+b+2r$. In this case we have
$$1+b+2\sqrt{b+4}<b\left( 1+\frac{1}{B_0^3}\right)^4$$
and for $B_0=10^5$ we get $b>2.5\cdot 10^{29}$ and that value can be used as a new value for $B_0$. After inserting this value we get an inequality which doesn't have solutions $b$ in positive integers. So each case leads to a contradiction, which implies that our assumption was wrong, i.e.\@ we have $2l \leq 3m$.
\par
Now we assume $m=l$. Similarly as before, we observe that we have
\begin{align*}
\frac{d+\varepsilon\sqrt{bd}}{\sqrt{bd}}\left( \frac{y+\sqrt{bd}}{2}\right)^{2m}-\frac{d+\sqrt{bd}}{\sqrt{bd}}&<W_{2m}^{(b,d)}=W_{2m}^{(a,d)}\\
&<\frac{d+\varepsilon\sqrt{ad}}{\sqrt{ad}}\left( \frac{x+\sqrt{ad}}{2}\right)^{2m}
\end{align*}
and since $d+\sqrt{bd}<\left( \frac{y+\sqrt{bd}}{2}\right)^2$, we get
$$\frac{d+\varepsilon\sqrt{bd}-1}{\sqrt{bd}}\left( \frac{y+\sqrt{bd}}{2}\right)^{2m}<\frac{d+\varepsilon\sqrt{ad}}{\sqrt{ad}}\left( \frac{x+\sqrt{ad}}{2}\right)^{2m}$$
and after multiplying and rearranging we get
$$\left( \frac{y+\sqrt{bd}}{x+\sqrt{ad}}\right)^{2m}<\sqrt{\frac{b}{a}}\cdot \frac{d+\varepsilon\sqrt{ad}}{d+\varepsilon\sqrt{bd}-1}.$$
But, we have
\begin{align*}
 \frac{d+\varepsilon\sqrt{ad}}{d+\varepsilon\sqrt{bd}-1}&<\frac{d+\sqrt{bd}}{d-\sqrt{bd}}=1+\frac{2\sqrt{bd}}{d-\sqrt{bd}}=1+\frac{2}{\frac{d}{\sqrt{bd}}-1}\\
 &<1+\frac{2}{\sqrt{B_0}-1}=\frac{\sqrt{B_0}+1}{\sqrt{B_0}-1},
\end{align*}
where the last inequality is true since $\frac{d}{\sqrt{bd}}=\sqrt{\frac{d}{b}}>\sqrt{\frac{abc}{b}}=\sqrt{ac}>\sqrt{b}>\sqrt{B_0}$.
So, it must hold
$$\left( \frac{y+\sqrt{bd}}{x+\sqrt{ad}}\right)^{2m}<\sqrt{\frac{b}{a}}\cdot \frac{\sqrt{B_0}+1}{\sqrt{B_0}-1}.$$
On the other hand, it is easy to see that
$$\left( \frac{y+\sqrt{bd}}{x+\sqrt{ad}}\right)^{2}>\sqrt{\frac{b}{a}},$$
which, with the previous inequality, leads to the conclusion that
$$\sqrt{\frac{b}{a}}^{m-1}< \frac{\sqrt{B_0}+1}{\sqrt{B_0}-1}.$$
From \cite[Lemma 3.2]{nas} we can conclude that $l'=2l>0.61803d^{1/4}>0.61803\cdot 10^{10/4}>195$. Also, from Lemma \ref{odnosi_indeksa_cetvorke} we have $2l\leq 4m+1$ so $m>48$. Now we observe 
$$\sqrt{\frac{b}{a}}^{47}< \frac{\sqrt{B_0}+1}{\sqrt{B_0}-1}$$
i.e.\@ $$(a+57\sqrt{a})^{47/2}<b^{47/2}<\frac{\sqrt{B_0}+1}{\sqrt{B_0}-1}a^{47/2}$$
and by solving this inequality in $a$ for $B_0=10^5$ we obtain $a>4.484\cdot 10^{10}$ which can be used as a new value for $B_0$, since $b>a$. By iterating this process we get a contradiction, this time a contradiction is with the upper bound $b<10^{36}$ from \cite{nas}. We can now conclude $m\neq l$.
\end{proof}

\begin{lemma}\label{odnos_h_im}
We have $h\geq 2m$.
\end{lemma}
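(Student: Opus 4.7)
The approach is to argue by contradiction: assume $h\leq 2m-1$ and derive a quantitative size mismatch for the common value $Y:=Y_{2h}^{(a,b)}=Y_{2m}^{(b,d)}$.

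From (\ref{parnirjjdbaAB}) together with its algebraic conjugate (noting $\frac{r+\sqrt{ab}}{2}\cdot\frac{r-\sqrt{ab}}{2}=1$, so the two roots are reciprocals), adding yields
\[Y_{2h}^{(a,b)}=\frac{\sqrt{a}+\sqrt{b}}{\sqrt{a}}\alpha^{2h}+\frac{\sqrt{a}-\sqrt{b}}{\sqrt{a}}\alpha^{-2h},\qquad \alpha:=\tfrac{r+\sqrt{ab}}{2},\]
whose second summand is negative, so $Y_{2h}^{(a,b)}<\frac{\sqrt{a}+\sqrt{b}}{\sqrt{a}}\alpha^{2h}$. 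Analogously, with $\gamma:=\frac{y+\sqrt{bd}}{2}$, equation (\ref{parnirjjdbaBD}) gives
\[Y_{2m}^{(b,d)}=\frac{\varepsilon\sqrt{b}+\sqrt{d}}{\sqrt{d}}\gamma^{2m}+\frac{\sqrt{d}-\varepsilon\sqrt{b}}{\sqrt{d}}\gamma^{-2m}\;\geq\;\frac{\sqrt{d}-\sqrt{b}}{\sqrt{d}}\gamma^{2m},\]
since both summands are nonnegative (as $\sqrt{d}>\sqrt{b}$) and the coefficient is minimized at $\varepsilon=-1$.

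Equating and using the hypothesis $\alpha^{2h}\leq\alpha^{4m-2}$, rearrange to
\[\left(\frac{\gamma^2}{\alpha^4}\right)^m\alpha^2 \;<\; \frac{(\sqrt{a}+\sqrt{b})\sqrt{d}}{\sqrt{a}(\sqrt{d}-\sqrt{b})}.\]
The right-hand side is bounded by a small absolute multiple of $\sqrt{b/a}$, since Lemmas \ref{granice_za_d_plus} and \ref{nas_rez_granice} give $d>abc>ab^2$ and $b>10^5$, making $\sqrt{b/d}$ negligible. For the left-hand side, the elementary bounds $ab<\alpha^2<ab+4$ and $\gamma^2>bd$, together with $d>abc$, yield $\gamma^2/\alpha^4>(c/a)\bigl(1-O((ab)^{-1})\bigr)$, so up to absorbed constants the inequality reduces to
\[\left(\frac{c}{a}\right)^m ab \;\lesssim\; \sqrt{b/a}.\]

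Since $m\geq 1$ and $c>a$, the left-hand side is at least $(c/a)\cdot ab=bc>b^2$, whereas the right-hand side is at most a small absolute constant times $\sqrt{b}$. This forces $b^{3/2}$ to be bounded by an absolute constant, flatly contradicting $b>10^5$. The main technical obstacle will be controlling the multiplicative error constants cleanly across the chain of estimates—most notably the factor $(1+4/(ab))^{2m-1}$ coming from the upper bound $\alpha^4<(ab+4)^2$, and the factor $(1-\sqrt{b/d})^{-1}$. These can be absorbed using the global upper bound $b<10^{36}$ from \cite{nas} together with the bounds on $m$ implicit in Lemma \ref{odnosim_n_l}, which keep the correction factors close to $1$.
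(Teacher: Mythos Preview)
Your approach is essentially the paper's: both compare the closed forms for $Y_{2h}^{(a,b)}$ and $Y_{2m}^{(b,d)}$ and exploit that $\gamma=\frac{y+\sqrt{bd}}{2}$ is roughly the square of $\alpha=\frac{r+\sqrt{ab}}{2}$. The one soft spot is your handling of the correction factor $(1+4/(ab))^{2m}$: Lemma~\ref{odnosim_n_l} gives \emph{no} upper bound on $m$, so that citation does not do what you want. Fortunately the worry is illusory: since $\gamma^2/\alpha^4>bd/(ab+4)^2>1$, the quantity $(\gamma^2/\alpha^4)^m$ is minimized at $m=1$, so you may simply take $m=1$ on the left and no control of the correction in terms of $m$ is needed. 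The paper executes this more cleanly by first absorbing your right-hand side into one extra power of $\alpha$ (showing $\frac{\sqrt{d}(\sqrt{a}+\sqrt{b})}{\sqrt{a}(\sqrt{d}-\sqrt{b})}<\alpha$), then proving the direct comparison $\gamma>\alpha^2$ via $\sqrt{bd}>r^2$, which yields $\alpha^{4m}<\gamma^{2m}<\alpha^{2h+1}$ and hence $4m<2h+1$ with no error terms at all.
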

\begin{proof}
Similarly as in the previous Lemma, for sequences $Y_{2h}^{(a,b)}$ and $Y_{2m}^{(b,d)}$ we have
\begin{align*}
&Y_{2h}^{(a,b)}=\frac{b+\sqrt{ab}}{\sqrt{ab}}\left( \frac{r+\sqrt{ab}}{2}\right)^{2h}+\frac{-b+\sqrt{ab}}{\sqrt{ab}}\left( \frac{r-\sqrt{ab}}{2}\right)^{2h},\\
&Y_{2m}^{(b,d)}=\frac{\varepsilon b+\sqrt{bd}}{\sqrt{bd}}\left( \frac{y+\sqrt{bd}}{2}\right)^{2m}+\frac{-\varepsilon b+\sqrt{bd}}{\sqrt{bd}}\left( \frac{y-\sqrt{bd}}{2}\right)^{2m}.\\
\end{align*}
If $Y=Y_{2h}^{(a,b)}=Y_{2m}^{(b,d)}$, we have
\begin{align*}
&\left(1-\sqrt{b/d}\right)\left( \frac{y+\sqrt{bd}}{2}\right)^{2m}<\frac{-b+\sqrt{bd}}{\sqrt{bd}}\left( \frac{y+\sqrt{bd}}{2}\right)^{2m}<Y_{2m}^{(b,d)}=Y_{2h}^{(a,b)}<\\&<\frac{b+\sqrt{ab}}{\sqrt{ab}}\left(\frac{r+\sqrt{ab}}{2}\right)^{2h}\leq \left(1+\sqrt{b/a}\right)\left(\frac{r+\sqrt{ab}}{2}\right)^{2h}.
\end{align*}
It is easy to see that $\frac{\sqrt{d}(\sqrt{a}+\sqrt{b})}{\sqrt{a}(\sqrt{d}-\sqrt{b})}<\frac{r+\sqrt{ab}}{2}$,
so we have
$$\left( \frac{y+\sqrt{bd}}{2}\right)^{2m}<\frac{1+\sqrt{b/a}}{1-\sqrt{b/d}}\left(\frac{r+\sqrt{ab}}{2}\right)^{2h}<\left(\frac{r+\sqrt{ab}}{2}\right)^{2h+1}.$$
Since
$$\frac{y+\sqrt{bd}}{2}>\sqrt{bd}>\sqrt{ab^2c}\geq \sqrt{ab^2(a+b+2r)}>r^2>\left(\frac{r+\sqrt{ab}}{2}\right)^2,$$
we get
$$\left(\frac{r+\sqrt{ab}}{2}\right)^{4m}<\left( \frac{y+\sqrt{bd}}{2}\right)^{2m}<\left(\frac{r+\sqrt{ab}}{2}\right)^{2h+1},$$
where it is easy to conclude $4m<2h+1$, i.e.\@ $2m\leq h$.
\end{proof}

For the completeness we will state classic congruences that hold for $D(4)$-quintuple.
\begin{lemma}\label{kongruencije}
Let $\{a,b,c,d,e\}$ be a $D(4)$-quintuple. Then
$$a\varepsilon l^2+xl\equiv b\varepsilon m^2+ym \equiv c\varepsilon n^2+zn (\bmod \ d).$$
\end{lemma}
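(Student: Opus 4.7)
The plan is to compute the common value $W = W_{2l}^{(a,d)} = W_{2m}^{(b,d)} = W_{2n}^{(c,d)}$ modulo $d^2$ from each of the three equalities (\ref{parnirjjdbaAD})--(\ref{parnirjjdbaCD}). Once the three expansions are shown to yield
$$W \equiv 2\varepsilon + d(a\varepsilon l^2 + xl) \equiv 2\varepsilon + d(b\varepsilon m^2 + ym) \equiv 2\varepsilon + d(c\varepsilon n^2 + zn) \pmod{d^2},$$
subtracting $2\varepsilon$ and dividing through by $d$ gives the lemma. By Lemma \ref{Wjepm2} the value of $\varepsilon$ is the same in all three representations, so the three sides genuinely match.

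Setting $\alpha = (x+\sqrt{ad})/2$ and expanding $\alpha^{2l} = P_l + Q_l\sqrt{ad}$, the equation (\ref{parnirjjdbaAD}) together with the conjugate identity gives $W_{2l}^{(a,d)} = \varepsilon p_l + d q_l$, where the integer sequences $p_l := 2P_l$ and $q_l := 2Q_l$ both satisfy the same linear recurrence $u_{l+1} = (ad+2)u_l - u_{l-1}$, with initial values $(p_0,p_1)=(2,ad+2)$ and $(q_0,q_1)=(0,x)$. The core of the argument is then the pair of congruences
\begin{align*}
p_l \equiv 2 + a l^2 d \pmod{d^2}, \qquad q_l \equiv l x \pmod d,
\end{align*}
both proven by induction on $l$. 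The $q_l$ claim is immediate since modulo $d$ the recurrence collapses to $u_{l+1} \equiv 2u_l - u_{l-1}$, whose solutions are arithmetic progressions. For the $p_l$ claim, assuming the statement up to $l$, the recurrence together with $a^2 l^2 d^2 \equiv 0 \pmod{d^2}$ gives
$$p_{l+1} \equiv (ad+2)(2 + al^2 d) - (2 + a(l-1)^2 d) \equiv 2 + ad\bigl(2 + 2l^2 - (l-1)^2\bigr) = 2 + a(l+1)^2 d \pmod{d^2},$$
where the elementary identity $2 + 2l^2 - (l-1)^2 = (l+1)^2$ closes the induction. Substituting these into $W_{2l}^{(a,d)} = \varepsilon p_l + d q_l$ yields $W \equiv 2\varepsilon + d(a\varepsilon l^2 + xl) \pmod{d^2}$.

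The identical computation applied to (\ref{parnirjjdbaBD}), with $(a,l,x)$ replaced by $(b,m,y)$, and to (\ref{parnirjjdbaCD}), with $(a,l,x)$ replaced by $(c,n,z)$, produces the remaining two congruences modulo $d^2$. Equating the three expressions for the same $W$, cancelling $2\varepsilon$, and dividing by $d$ completes the proof. The only non-mechanical point is the inductive step for $p_l \pmod{d^2}$, which is precisely where the quadratic term $a\varepsilon l^2$ (and its analogues for $b,c$) is produced; everything else is a direct transfer of the argument through the three Pell pairs.
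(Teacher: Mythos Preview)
Your proof is correct and follows essentially the same approach as the paper: both establish $W_{2l}^{(a,d)}\equiv 2\varepsilon+d(a\varepsilon l^2+xl)\pmod{d^2}$ via the two-step recurrence $u_{l+1}=(ad+2)u_l-u_{l-1}$ (equivalently $W_{2l+2}=(x^2-2)W_{2l}-W_{2l-2}$) and then equate the three representations of $W$. The paper simply cites \cite[Lemma~3]{dujram} for the inductive congruence, whereas you carry it out explicitly by splitting $W=\varepsilon p_l+dq_l$ and handling $p_l\pmod{d^2}$ and $q_l\pmod d$ separately; this is a harmless organizational choice, not a different method.
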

\begin{proof}
If we observe the sequence $W_{2l}^{(a,d)}$ we see that
\begin{align*}
W_{2l+2}&=xW_{2l+1}-W_{2l}=x^2W_{2l}-(xW_{2l-1}-W_{2l-2})-W_{2l-2}-W_{2l}=\\
&=x^2W_{2l}-W_{2l}-W_{2l-2}-W_{2l}=(x^2-2)W_{2l}-W_{2l-2}.
\end{align*}
As in \cite[Lemma 3]{dujram} it is easy to prove
$$W_{2l}^{(a,d)}\equiv 2\varepsilon +d(a\varepsilon l^2+xl) (\bmod \ d^2),$$
and since $W=W_{2l}^{(a,d)}=W_{2m}^{(b,d)}=W_{2n}^{(c,d)}$, and analogous results hold for all sequences, for a $D(4)$-quintuple, we get
$$a\varepsilon l^2+xl\equiv b\varepsilon m^2+ym \equiv c\varepsilon n^2+zn (\bmod \ d).$$
\end{proof}

Unfortunately, using these congruences and methods from \cite{petorke} we could not get $m>\alpha \sqrt{d/b}$ for some coefficient $\alpha$ as "large" as the one proved for $D(1)$-quintuples in \cite{petorke}. Our largest possible $\alpha$ was obtained after adjusting the method from \cite[Proposition 3.1]{ct}, which we have also used in \cite{nas} to get a similar coefficient for $D(4)$-quadruples. We omit the proof since it is similar to the one given in detail in \cite{ct}.

\begin{lemma}\label{lbid}
Let $\{a,b,c,d,e\}$ be a $D(4)$-quintuple such that $a<b<c<d<e$, $W_{2l}^{(a,b)}=W_{2m}^{(b,d)}$ and $\frac{3}{2}m\geq l>m\geq 2$. Assume that $a\geq A_0$, $b\geq B_0$ and $d\geq D_0$, $b>\rho a$, $\rho \geq 1$. Then
$$l>\alpha b^{-1/2}d^{1/2}$$
for every real number $\alpha$ that satisfy both inequalities
\begin{align}
\alpha^2 +\alpha(1+2B_0^{-1}D_0^{-1})&\leq 1, \label{prvanej}\\
\frac{20}{9}\alpha^2+\alpha(B_0(\lambda+\rho^{-1/2})+2D_0^{-1}(\lambda+\rho^{1/2}))&\leq B_0, \label{druganej}
\end{align}
where $\lambda=\sqrt{\frac{A_0+4}{\rho A_0+4}}.$
\end{lemma}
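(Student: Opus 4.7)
The plan is to follow the strategy of \cite[Proposition 3.1]{ct}, adapted to the $D(4)$-setting in the same spirit as \cite{nas}. The proof proceeds by contradiction: assume $l \leq \alpha b^{-1/2}d^{1/2}$, and show that the two inequalities (\ref{prvanej}) and (\ref{druganej}) correspond to two successive tightenings that jointly produce a contradiction.

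First, set $P := a\varepsilon l^{2} + xl$ and $Q := b\varepsilon m^{2} + ym$; Lemma \ref{kongruencije} gives $P \equiv Q \pmod{d}$. Using $m < l$ (Lemma \ref{odnos_m_i_l}), the estimates $x < \sqrt{ad}+2/\sqrt{ad}$ and $y < \sqrt{bd}+2/\sqrt{bd}$, and the lower bounds $a\geq A_{0}$, $b\geq B_{0}$, $d\geq D_{0}$, the hypothesis $l \leq \alpha\sqrt{d/b}$ would produce
$$\frac{|P-Q|}{d} \;\leq\; \alpha^{2} + \alpha\bigl(1 + 2B_{0}^{-1}D_{0}^{-1}\bigr),$$
which is strictly less than $1$ by (\ref{prvanej}). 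Hence $|P-Q|<d$, and the congruence forces $P=Q$, equivalently $\varepsilon(bm^{2}-al^{2}) = ym - xl$.

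Second, squaring this last equality and using $x^{2}=ad+4$, $y^{2}=bd+4$, then rearranging through the factorisation $(xl-ym)(xl+ym) = d(al^{2}-bm^{2}) + 4(l^{2}-m^{2})$ and substituting $xl-ym = -\varepsilon(bm^{2}-al^{2})$, one extracts the clean identity
$$(al^{2}-bm^{2})\bigl(d + \varepsilon(xl+ym)\bigr) \;=\; 4(m^{2}-l^{2}).$$
On the right, $|m^{2}-l^{2}|\leq l^{2}\leq \alpha^{2}d/b$. On the left, the factor $d+\varepsilon(xl+ym)$ is positive and of size $\Theta(d)$ by the work of Step 1. To lower-bound $|al^{2}-bm^{2}|$ one exploits $l\leq\tfrac{3}{2}m$ together with the comparison of $\sqrt{a/b}$ with $\rho^{-1/2}$, the parameter $\lambda = \sqrt{(A_{0}+4)/(\rho A_{0}+4)}$ encoding the sharp $x/y$ versus $\sqrt{a/b}$ correction via $x\leq \lambda y$ in the relevant range. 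Collecting the contributions and normalising by $b$ produces exactly (\ref{druganej}): the coefficient $\tfrac{20}{9}$ arises from pairing $l^{2}\leq\tfrac{9}{4}m^{2}$ with $m\leq l$ in the quadratic terms, the summand $\alpha B_{0}(\lambda+\rho^{-1/2})$ from the mixed $xl\pm ym$ contribution, and $2D_{0}^{-1}(\lambda+\rho^{1/2})$ from the $O((ad)^{-1/2})$, $O((bd)^{-1/2})$ tails in the expansions of $x$ and $y$.

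The main obstacle is the second step. One must handle $\varepsilon=\pm1$ uniformly through the calculation, track the $O((ad)^{-1/2})$ corrections to $x$ and $y$ carefully, and make sure the sharp comparison between $al^{2}$ and $bm^{2}$ — which really depends on how close $l/m$ is to $\sqrt{b/a}$, not merely to $\tfrac{3}{2}$ — retains its $\rho$-dependence through $\lambda$. The hypothesis $b>\rho a$ is used precisely to prevent $d+\varepsilon(xl+ym)$ from collapsing when $\varepsilon=-1$, which would destroy the size comparison; without it the two inequalities (\ref{prvanej}) and (\ref{druganej}) could not be decoupled and an admissible $\alpha$ would not exist.
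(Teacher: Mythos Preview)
The paper omits the proof entirely, stating only that it ``is similar to the one given in detail in \cite{ct}'' with the $D(4)$ adjustments of \cite{nas}. Your proposal follows precisely that route, so the approach is the same. Your Step~1 is correct: bounding $|P-Q|$ by $d(\alpha^{2}+\alpha(1+2B_0^{-1}D_0^{-1}))$ and invoking (\ref{prvanej}) forces $P=Q$, and the factorisation $(al^{2}-bm^{2})(d+\varepsilon(xl+ym))=4(m^{2}-l^{2})$ is derived correctly.

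There is one genuine imprecision in your Step~2. You write that ``the factor $d+\varepsilon(xl+ym)$ is positive and of size $\Theta(d)$ by the work of Step~1,'' but Step~1 does not give this for $\varepsilon=-1$; indeed (\ref{prvanej}) alone allows $\alpha$ up to roughly $0.618$, while $xl+ym<d$ needs $\alpha<1/2$ approximately. The correct organisation is a short case split. For $\varepsilon=+1$ one has $d+xl+ym>d$, whence $|bm^{2}-al^{2}|\ge 1$ forces $d<\tfrac{20}{9}l^{2}\le\tfrac{20}{9}\alpha^{2}d/b$, contradicting $b\ge B_{0}$. For $\varepsilon=-1$ with $bm^{2}>al^{2}$ (so $d>xl+ym$), the same integrality bound gives $d\le xl+ym+\tfrac{20}{9}l^{2}$, and estimating $xl+ym$ via $x\le\lambda y$ and $\sqrt{a/b}<\rho^{-1/2}$ produces exactly the reversed form of (\ref{druganej}). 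For $\varepsilon=-1$ with $bm^{2}<al^{2}$, the identity forces $xl+ym>d$; but the same estimate on $xl+ym$ together with (\ref{druganej}) gives $xl+ym<d$, a direct contradiction. So rather than ``lower-bounding $|al^{2}-bm^{2}|$'' in a refined way, the argument simply uses that it is a nonzero integer, and (\ref{druganej}) is what rules out the awkward sign subcase---not Step~1. Your identification of the $\tfrac{20}{9}$ coefficient via $4(l^{2}-m^{2})\le \tfrac{20}{9}l^{2}$ from $m\ge \tfrac{2}{3}l$ is correct.
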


Now we use this result to get lower bounds on indices in the terms of $ac$.

\begin{lemma}\label{indeksi_ac}
Let $\{a,b,c,d,e\}$ be a $D(4)$-quintuple. Then $l>0.499997\sqrt{ac}$, $j>m>0.333331\sqrt{ac}$ and $h>0.666662\sqrt{ac}$.
\end{lemma}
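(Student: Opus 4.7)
The plan is to derive the bound on $l$ directly from Lemma \ref{lbid}, and then propagate the bounds on $m$, $h$, and $j$ using the already established inter-index relations.

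For $l$, I apply Lemma \ref{lbid} with $A_0=1$, $B_0=10^5$ (from Lemma \ref{nas_rez_granice}), $D_0=10^{10}$ (since Lemma \ref{granice_za_d_plus} gives $d>abc\geq 10^{10}$), and $\rho=1$, so that $\lambda=1$. The hypotheses $l>m\geq 2$ and $l\leq\tfrac{3}{2}m$ are supplied by Lemma \ref{odnosim_n_l} (which gives $m\geq n\geq 8$) together with Lemma \ref{odnos_m_i_l}. Condition (\ref{prvanej}) reduces to approximately $0.75\leq 1$, which is trivial. For (\ref{druganej}), substituting $\alpha=\tfrac{1}{2}-\delta$ and retaining leading terms yields
\[
\tfrac{5}{9}+(1-2\delta)B_0+O(D_0^{-1})\leq B_0,
\]
i.e.\ $\delta\geq (\tfrac{5}{9}+O(D_0^{-1}))/(2B_0)\approx 2.78\cdot 10^{-6}$; the choice $\alpha=0.499997$ satisfies this. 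Lemma \ref{lbid} then gives $l>0.499997\sqrt{d/b}$, and since $d>abc$ we conclude $l>0.499997\sqrt{ac}$.

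The remaining bounds follow quickly. Lemma \ref{odnos_m_i_l} yields $m\geq\tfrac{2l}{3}>0.333331\sqrt{ac}$, and then Lemma \ref{odnos_h_im} yields $h\geq 2m>0.666662\sqrt{ac}$. For $j>m$, I compare the shared value $X=X_{2j}^{(a,c)}=X_{2l}^{(a,d)}$ via the Binet-type formulas underlying (\ref{parnirjjdbaAC}) and (\ref{parnirjjdbaAD}): the dominant growth rates are $(s+\sqrt{ac})/2<(x+\sqrt{ad})/2$ (as $c<d$), and the initial coefficients $1\pm\sqrt{a/c}$ and $1\pm\varepsilon\sqrt{a/d}$ differ only negligibly, so equating the two sides and taking logarithms forces $j/l\approx\log(ad)/\log(ac)>1$, hence $j>l$. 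Combined with $l>m$ from Lemma \ref{odnos_m_i_l}, this yields $j>l>m$ as required.

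The main obstacle is the tightness of (\ref{druganej}): with the worst-case choice $\rho=1$, $\lambda=1$ one has $\lambda+\rho^{-1/2}=2$, so the inequality fails by an amount of order only $1/B_0$, forcing the constant $0.499997$ to be essentially best possible under these parameters. The step $j>l$ is technically routine; the logarithmic gap $\log(d/c)\geq\log(ab)\geq\log 10^5$ provides ample margin over the sub-leading error terms.
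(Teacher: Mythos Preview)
Your proof is correct and follows the paper's approach exactly for the bounds on $l$, $m$, and $h$: apply Lemma~\ref{lbid} with $\rho=1$, $A_0=1$, $B_0=10^5$, $D_0=10^{10}$ to obtain $\alpha=0.499997$, then invoke $d>abc$ together with Lemmas~\ref{odnos_m_i_l} and~\ref{odnos_h_im}. Your verification of conditions~(\ref{prvanej}) and~(\ref{druganej}) is more explicit than the paper's (which simply says ``we compute''), and the numerics check out.

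The one point where you add something is the inequality $j>m$. The paper states that everything follows from Lemmas~\ref{odnosim_n_l}, \ref{odnos_m_i_l}, \ref{odnos_h_im}, but none of those lemmas mentions $j$ at all, so as written the paper's justification for $j>m$ is incomplete. Your argument---comparing $X=X_{2j}^{(a,c)}=X_{2l}^{(a,d)}$ via the explicit formulas behind (\ref{parnirjjdbaAC}) and (\ref{parnirjjdbaAD}) and using that the base $(x+\sqrt{ad})/2$ dominates $(s+\sqrt{ac})/2$---yields $j>l$, and combined with $l>m$ from Lemma~\ref{odnos_m_i_l} gives $j>m$. This is the natural way to fill the gap and is entirely parallel to the proof of Lemma~\ref{odnos_h_im}; the margin $\log(d/c)>\log(ab)$ you cite is more than sufficient to absorb the sub-leading terms, so the sketch is easily made rigorous.
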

\begin{proof}
By inserting $\rho=1$, $A_0=1$, $B_0=10^5$ and $D_0=10^{10}$ in the inequalities from Lemma \ref{lbid} we compute that $\alpha=0.499997$. The statement now follows from Lemmas \ref{odnosim_n_l},  \ref{odnos_m_i_l} and \ref{odnos_h_im} and the fact that $d>abc$.
\end{proof}

\section{Linear forms in logarithms}
In this section we use different methods to find a good upper bound on the index $h$ and a product $ac$ in a $D(4)$-quintuple. Even though many authors usually apply Matveev's theorem on a linear form in logarithms, we will use Aleksentsev's version of the theorem from \cite{aleks} as authors in \cite{ct} did and which we also applied in \cite{nas} because it will give us slightly better bounds. \par
For any non-zero algebraic number $\gamma$ of degree $D$ over $\mathbb{Q}$, with minimal polynomial $A\prod_{j=1}^{D}\left( X-\gamma^{(j)} \right)$ over $\mathbb{Z}$, we define its absolute logarithmic height as
$$h(\gamma)=\frac{1}{D}\left( \log A+\sum_{j=1}^D \log^+ \left|(\gamma^{(j)})\right| \right), $$
where $\log^+\alpha=\log \max \left\{ 1, \alpha \right\}.$

\begin{theorem}[Aleksentsev]\label{aleks}
Let $\Lambda$ be a linear form in logarithms of $n$ multiplicatively independent totally real algebraic numbers $\alpha_1,\dots, \alpha_n$, with rational coefficients $b_1,\dots,b_n$. Let $h(\alpha_j)$ denote the absolute logarithmic height of $\alpha_j$ for $1\leq j\leq n$. Let $d$ be the degree of the number field $\mathcal{K}=\mathbb{Q}(\alpha_1,\dots,\alpha_n)$, and let $A_j=\max \left(dh(\alpha_j),|\log \alpha_j|,1 \right)$. Finally, let
\begin{equation}
E=\max \left( \max_{1\leq i,j \leq n} \left\{ \frac{|b_i|}{A_j}+\frac{|b_j|}{A_i} \right\},3 \right).
\end{equation}
Then
$$\log|\Lambda|\geq -5.3n^{\frac{1-2n}{2}}(n+1)^{n+1}(n+8)^2(n+5)31.44^n d^2(\log E)A_1\cdots A_n\log(3nd).$$
\end{theorem}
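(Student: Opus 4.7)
The plan is to follow the Baker--Matveev transcendence method with the refinements introduced by Aleksentsev. The argument proceeds by contradiction: assume $|\Lambda|$ is strictly smaller than the asserted lower bound and derive a contradiction from the multiplicative independence of $\alpha_1, \dots, \alpha_n$. The broad outline is classical, but the sharp constants $5.3 \cdot 31.44^n$ together with the polynomial factors in $n$ emerge only from a very delicate optimization of the free parameters in the construction.

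First I would fix integer parameters $T$ (the order of vanishing) and $L_0, L_1, \dots, L_n$ (degrees in each variable of an auxiliary polynomial) as explicit functions of $n$, $d$, $E$, and the $A_j$. Siegel's lemma applied over the number field $\mathcal{K}$ produces a nonzero $P \in \mathbb{Z}[X_0, X_1, \dots, X_n]$ of multidegree at most $(L_0, \dots, L_n)$ whose associated exponential polynomial
\[
\Phi(z) = P\bigl(z,\, \alpha_1^{z}, \dots, \alpha_n^{z}\bigr)
\]
vanishes to order at least $T$ at each integer $z = 0, 1, \dots, S$ for a carefully chosen $S$. The $A_j$ enter through the height bound supplied by Siegel's lemma, and the totally real hypothesis on the $\alpha_j$ lets one avoid certain archimedean constants that would otherwise appear here and in the subsequent analytic estimates. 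Next I would use the smallness of $\Lambda = b_1 \log \alpha_1 + \cdots + b_n \log \alpha_n$ to extrapolate: solving formally for one of the $\log \alpha_i$ modulo an $O(|\Lambda|)$ error shows that $\Phi$ is in fact extremely small at many additional integer arguments, and a Schwarz-type lemma on a disc whose radius depends on the $A_j$ promotes this smallness to honest vanishing, an operation that can then be iterated.

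After enough iterations one has produced a polynomial vanishing to high order at far more points of the one-parameter subgroup $(z, \alpha_1^{z}, \dots, \alpha_n^{z})$ than its controlled degree should allow, and a zero estimate in the style of Philippon--Waldschmidt, exploiting the multiplicative independence of $\alpha_1, \dots, \alpha_n$, supplies the contradiction. The main obstacle, and really the entire content beyond the classical skeleton, is the quantitative bookkeeping: one must tune $T$, $S$, and the $L_j$ so that the Siegel-lemma height, the Schwarz-lemma loss, the extrapolation exponent, and the zero-estimate threshold all match to within a factor of $1 + o(1)$. Aleksentsev's improvement over Matveev consists precisely in using Feldman-type polynomials in the $X_0$ variable in place of monomials, in exploiting the totally real setting when estimating the interpolation determinants, and in more careful combinatorics on the multi-indices; I would expect this combinatorial and analytic optimization, rather than any conceptual novelty, to be by far the most delicate part of the proof.
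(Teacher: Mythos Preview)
The paper does not prove this theorem at all: it is quoted verbatim from Aleksentsev's paper \cite{aleks} and used as a black box, exactly as the Mignotte and Laurent theorems later in the same section are quoted from \cite{mignote} and \cite{laurent}. There is therefore no ``paper's own proof'' against which to compare your proposal.

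Your sketch is a reasonable high-level description of the Baker--Matveev machinery (auxiliary polynomial via Siegel's lemma, extrapolation via Schwarz, zero estimate), and you correctly flag that the specific constants come from Aleksentsev's refinements rather than from the classical skeleton. But none of this is relevant to the present paper: reproducing even a fully rigorous version of this outline would be out of scope here, since the authors simply import the result and apply it to their concrete linear form $\Lambda_1$. If you intend to supply a proof, you are essentially rewriting \cite{aleks}, which is a separate project of a different order of magnitude; for the purposes of this paper no proof is expected or given.
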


\par
Let us define a linear form in logarithms
$$\Lambda_1:=2h\log\frac{r+\sqrt{ab}}{2}-2j\log\frac{s+\sqrt{ac}}{2}+\log{\frac{\sqrt{c}(\sqrt{a}+\sqrt{b})}{\sqrt{b}(\sqrt{a}+\sqrt{c})}}.$$
Analogously as in \cite[Lemma 17]{petorke} we can find the bounds for $\Lambda_1$.
\begin{lemma}\label{lin_forma_granice}
We have $0<\Lambda_1<\left(\frac{s+\sqrt{ac}}{2}\right)^{-4j}.$
\end{lemma}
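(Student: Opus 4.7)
The plan is to convert the linear form into a logarithm of a closely-controlled ratio. Set $\alpha_1=(r+\sqrt{ab})/2$ and $\alpha_2=(s+\sqrt{ac})/2$; both exceed $1$ with conjugate $\alpha_i^{-1}$ under the Galois action $\sqrt{b}\mapsto-\sqrt{b}$ (resp.\@ $\sqrt{c}\mapsto-\sqrt{c}$). Applying these conjugations to (\ref{parnirjjdbaAB}) and (\ref{parnirjjdbaAC}) and subtracting each conjugate relation from the original yields
$$X\sqrt{b}=(\sqrt{a}+\sqrt{b})\alpha_1^{2h}+(\sqrt{b}-\sqrt{a})\alpha_1^{-2h},\quad X\sqrt{c}=(\sqrt{a}+\sqrt{c})\alpha_2^{2j}+(\sqrt{c}-\sqrt{a})\alpha_2^{-2j}.$$
I would rewrite these as $X\sqrt{b}=(\sqrt{a}+\sqrt{b})\alpha_1^{2h}(1+\eta_1)$ and $X\sqrt{c}=(\sqrt{a}+\sqrt{c})\alpha_2^{2j}(1+\eta_2)$, with
$$\eta_1=\frac{\sqrt{b}-\sqrt{a}}{(\sqrt{a}+\sqrt{b})\alpha_1^{4h}},\qquad \eta_2=\frac{\sqrt{c}-\sqrt{a}}{(\sqrt{a}+\sqrt{c})\alpha_2^{4j}},$$
both lying in $(0,1)$ since $(\sqrt{c_i}-\sqrt{a})/(\sqrt{a}+\sqrt{c_i})<1$ and $\alpha_i>1$.

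Defining $P=(\sqrt{a}+\sqrt{b})\alpha_1^{2h}/\sqrt{b}$ and $Q=(\sqrt{a}+\sqrt{c})\alpha_2^{2j}/\sqrt{c}$, a direct calculation shows $\Lambda_1=\log(P/Q)$. The rewriting above gives $P=X/(1+\eta_1)$ and $Q=X/(1+\eta_2)$, so
$$\Lambda_1=\log\frac{1+\eta_2}{1+\eta_1}.$$
The upper bound is then immediate: since $\log(1+\eta_1)>0$,
$$\Lambda_1<\log(1+\eta_2)<\eta_2<\alpha_2^{-4j}=\left(\tfrac{s+\sqrt{ac}}{2}\right)^{-4j}.$$

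For the lower bound $\Lambda_1>0$ what is needed is $\eta_2>\eta_1$. Eliminating $\alpha_i^{4\cdot}$ from the two identities gives the implicit relations
$$\frac{\eta_1}{(1+\eta_1)^2}=\frac{b-a}{bX^2},\qquad \frac{\eta_2}{(1+\eta_2)^2}=\frac{c-a}{cX^2}.$$
The map $t\mapsto t/(1+t)^2$ has derivative $(1-t)/(1+t)^3$ and is therefore strictly increasing on $(0,1)$. The inequality $(b-a)/(bX^2)<(c-a)/(cX^2)$ reduces to $1-a/b<1-a/c$, which holds because $b<c$. Since $\eta_1,\eta_2\in(0,1)$, monotonicity yields $\eta_1<\eta_2$ and hence $\Lambda_1>0$.

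The main obstacle is the lower bound. A naive comparison of the error terms $(\sqrt{b}-\sqrt{a})\alpha_1^{-2h}/\sqrt{b}$ and $(\sqrt{c}-\sqrt{a})\alpha_2^{-2j}/\sqrt{c}$ is inconclusive because the small positive gap $b(c-a)-c(b-a)=a(c-b)$ must not be swamped by the uncontrolled perturbation in the ratio $\alpha_2^{2j}/\alpha_1^{2h}$. Packaging everything into the single implicit relation for $\eta_i/(1+\eta_i)^2$ bypasses this issue, since both perturbations are encoded into the same monotone function of a quantity that depends only on $a,b,c,X$.
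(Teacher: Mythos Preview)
Your proof is correct. The paper itself does not give a proof but simply defers to \cite[Lemma 17]{petorke}, whose argument is the standard one: express $X$ in two ways from (\ref{parnirjjdbaAB}) and (\ref{parnirjjdbaAC}), take the quotient, and bound the error terms. Your derivation follows exactly this strategy for the upper bound, and your treatment of positivity via the implicit relation $\eta_i/(1+\eta_i)^2=(c_i-a)/(c_iX^2)$ together with the monotonicity of $t\mapsto t/(1+t)^2$ on $(0,1)$ is a clean self-contained packaging of the comparison $\eta_1<\eta_2$. In the literature this positivity is sometimes argued more directly (e.g.\ by writing $\eta_iP_i^2=1-a/c_i$ with $P_1(1+\eta_1)=P_2(1+\eta_2)=X$ and manipulating), but your route avoids having to control the ratio $P/Q$ separately and is arguably tidier.
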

To apply Theorem \ref{aleks}\@ first we must find values of the parameters, and we can easily see that
\begin{align*}
&n=3,\quad d=4, \quad b_1=2h,\quad b_2=-2j,\quad b_3=1;\\
&\alpha_1=\frac{r+\sqrt{ab}}{2},\quad \alpha_2=\frac{s+\sqrt{ac}}{2},\quad \alpha_3=\frac{\sqrt{c}(\sqrt{a}+\sqrt{b})}{\sqrt{b}(\sqrt{a}+\sqrt{c})}.
\end{align*}
It is not difficult to see that $h(\alpha_1)=\frac{1}{2}\log \alpha_1$ and $h(\alpha_2)=\frac{1}{2}\log \alpha_2$.\\
Minimal polynomial of $\alpha_3$ is equal to a polynomial
\begin{align*}
p_3(X)&=b^2(c-a)^2X^4-4b^2c(c-a)X^3+\\
&2bc(3bc-a^2-ac-ab)X^2-4bc^2(b-a)X+c^2(b-a)^2
\end{align*}
divided by the greatest common divisor of its coefficients, which we will denote with $g$. Zeros of the polynomial $p_3(X)$ are $\beta_1=\frac{\sqrt{c}(-\sqrt{a}+\sqrt{b})}{\sqrt{b}(\sqrt{a}+\sqrt{c})}$, $\beta_2=\frac{\sqrt{c}(\sqrt{a}+\sqrt{b})}{\sqrt{b}(-\sqrt{a}+\sqrt{c})}$, $\beta_3=\frac{\sqrt{c}(-\sqrt{a}+\sqrt{b})}{\sqrt{b}(-\sqrt{a}+\sqrt{c})}$ and $\alpha_3$. It holds
$$\beta_1<\beta_3<1$$
and
$$1<\alpha_3<\beta_2,$$
which implies
$$h(\alpha_3)=\frac{1}{4}\left(\log \frac{b^2(c-a)^2}{g}+\log \alpha_3 +\log \beta_2 \right)\leq\frac{1}{4}\left(\log (b^2(c-a)^2)+\log \alpha_3 +\log \beta_2 \right).$$
We can observe that
\begin{align*}
h(\alpha_3)&\leq \frac{1}{4}\left(\log (b^2(c-a)^2)+\log \frac{c(\sqrt{a}+\sqrt{b})^2}{b(c-a)}\right)\\
&=\frac{1}{4}\log(cb(c-a)(\sqrt{a}+\sqrt{b})^2)\\
&<\frac{1}{4}\log c^4= \log c.
\end{align*}
Since the function on the right hand side of the inequality in Theorem \ref{aleks}\@ is decreasing in $A_3$ we can take
$$A_1=4\frac{1}{2}\log \alpha_1=2\log \alpha_1,\quad A_2=2\log \alpha_2, \quad A_3=4\log c=\log c^4 .$$
\par
Observe that $A_1<A_2<A_3$ and $j<h$, so we  have
$E=\max\left\{\frac{2h}{\log \alpha_1},3 \right\}.$
Since $0.66\sqrt{ac}>0.66 r>\log r^3 $, which is true for every $r>10$, we have $h>0.66\sqrt{ac}>3\log r> 3\log \alpha_1$ which implies $\frac{2h}{\log \alpha_1}>3$, i.e.\@ we can take $E=\frac{2h}{\log \alpha_1}$ and apply Theorem \ref{aleks}\@ to get,
\begin{align*}
\log|\Lambda_1|>&-5.3n^{0.5-n}(n+1)^{n+1}(n+8)^2(n+5)31.44^n d^2\log \frac{2h}{\log \alpha_1}\\
& \cdot 2\log\alpha_1 \cdot 2 \log \alpha_2 \cdot 4 \log c\cdot \log(3nd).
\end{align*}
On the other hand, from Lemma \ref{lin_forma_granice}\@ and the fact that $|b_1|A_1<|b_2|A_2$ we have
$$\log|\Lambda_1|<-4j\log \alpha_2<-4h \log \alpha_1,$$
which now implies
\begin{align*}
4h \log \alpha_1<&5.3n^{0.5-n}(n+1)^{n+1}(n+8)^2(n+5)31.44^n d^2\log \frac{2h}{\log \alpha_1}\\
& \cdot 2\log\alpha_1 \cdot 2 \log \alpha_2 \cdot 4 \log c\cdot \log(3nd).
\end{align*}
We put $n=3$,  $d=4$ and get
$$\frac{h}{\log 2h-\log\log\sqrt{10^5}}<6.005175\cdot 10^{11} \log \alpha_2 \log c,$$
where we have used $\alpha_1>\sqrt{ab}>\sqrt{10^5}$.
Now we use that $\alpha_2<\sqrt{ac+4}$, $c\leq ac$ and since the left hand side of the inequality is increasing in $h$ we can use $h>0.666662\sqrt{ac}$ to get
\begin{equation}\label{ac_aleks}
ac<1.08915\cdot 10^{34}
\end{equation}
and
\begin{equation}\label{h_aleks}
h<6.95745\cdot 10^{16}.
\end{equation}
\par
We collect these observations in the next Proposition.
\begin{proposition}\label{matveev_prop}
Let  $\{a,b,c,d,e\}$ be a $D(4)$-quintuple such that $a<b<c<d<e$, then $ac<1.08915\cdot 10^{34}$ and $h<6.95745\cdot 10^{16}$.
Moreover, $$\frac{h}{\log 2h-\log\log\sqrt{10^5}}<6.005175\cdot 10^{11} \log \alpha_2 \log c.$$
\end{proposition}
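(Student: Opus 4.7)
The plan is to apply Aleksentsev's theorem (Theorem \ref{aleks}) to the linear form in logarithms
$$\Lambda_1 := 2h\log\frac{r+\sqrt{ab}}{2} - 2j\log\frac{s+\sqrt{ac}}{2} + \log\frac{\sqrt{c}(\sqrt{a}+\sqrt{b})}{\sqrt{b}(\sqrt{a}+\sqrt{c})},$$
which arises naturally by comparing the ``$h$-th'' solution of (\ref{parnirjjdbaAB}) with the ``$j$-th'' solution of (\ref{parnirjjdbaAC}) — both express the same integer $X$ coming from the quintuple. Lemma \ref{lin_forma_granice} gives the upper bound $\log|\Lambda_1| < -4j\log\alpha_2$; since $\alpha_1<\alpha_2$ and $j\leq h$, this rearranges conveniently to $\log|\Lambda_1|< -4h\log\alpha_1$, matching the coefficient that the lower bound from Aleksentsev will naturally carry.

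To produce the matching lower bound I would set $n=3$, and note that $\alpha_1,\alpha_2,\alpha_3$ all lie in $\mathbb{Q}(\sqrt{a},\sqrt{b},\sqrt{c})$, a field of degree $d=4$ over $\mathbb{Q}$. The heights $h(\alpha_1)=\tfrac12\log\alpha_1$ and $h(\alpha_2)=\tfrac12\log\alpha_2$ follow from the quadratic minimal polynomials $X^2-rX+1$ and $X^2-sX+1$. For $\alpha_3$ I would write down its quartic minimal polynomial $p_3(X)$ explicitly, exhibit its four roots, observe that only $\alpha_3$ and the conjugate $\beta_2=\sqrt{c}(\sqrt a+\sqrt b)/(\sqrt b(-\sqrt a+\sqrt c))$ exceed $1$ in modulus, and use the identity $\alpha_3\beta_2 = c(\sqrt{a}+\sqrt{b})^2/(b(c-a))$ together with a crude bound on the leading coefficient to derive $h(\alpha_3)<\log c$. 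Taking $A_1=2\log\alpha_1$, $A_2=2\log\alpha_2$, $A_3=4\log c$ and using $h>0.666662\sqrt{ac}>3\log\alpha_1$ from Lemma \ref{indeksi_ac} then lets me identify $E=2h/\log\alpha_1$ as the maximum entering the theorem.

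Inserting $n=3$ and $d=4$ collapses the universal factor in Theorem \ref{aleks} to an explicit numerical constant, and combining the two bounds on $\log|\Lambda_1|$ gives
$$\frac{h}{\log 2h-\log\log\sqrt{10^5}}<6.005175\cdot 10^{11}\,\log\alpha_2\,\log c$$
after dividing by $\log\alpha_1$ and using $\alpha_1>\sqrt{10^5}$. To extract the numerical bounds I would then substitute the elementary estimates $\alpha_2<\sqrt{ac+4}$ and $c\leq ac$, together with the lower bound $h>0.666662\sqrt{ac}$, converting everything into a single inequality in $ac$ whose numerical solution yields $ac<1.08915\cdot 10^{34}$ and hence $h<6.95745\cdot 10^{16}$.

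The main obstacle is the height computation for $\alpha_3$: one has to identify the genuine minimal polynomial (not merely a vanishing polynomial), correctly handle the gcd $g$ of its coefficients, and verify the sign/magnitude pattern of the four conjugates so that $\log^{+}|\beta_i|$ contributes only from $\alpha_3$ and $\beta_2$. A secondary technical point is confirming multiplicative independence of $\alpha_1,\alpha_2,\alpha_3$, which is needed to invoke Theorem \ref{aleks}; this follows because $\alpha_1,\alpha_2$ are quadratic units with different discriminants while $\alpha_3$ is a genuinely quartic algebraic number. Everything else is numerical bookkeeping.
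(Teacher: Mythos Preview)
Your approach is essentially identical to the paper's: the same linear form $\Lambda_1$, the same application of Aleksentsev's theorem with $n=3$, $d=4$, the same height computations (including the quartic minimal polynomial of $\alpha_3$ and the bound $h(\alpha_3)<\log c$), the same choice $E=2h/\log\alpha_1$, and the same final substitutions using $h>0.666662\sqrt{ac}$.

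There is one small slip. You write that from $\log|\Lambda_1|<-4j\log\alpha_2$ one passes to $\log|\Lambda_1|<-4h\log\alpha_1$ ``since $\alpha_1<\alpha_2$ and $j\leq h$.'' Those two premises do \emph{not} imply $j\log\alpha_2\geq h\log\alpha_1$ (take $j$ much smaller than $h$). The correct justification, which the paper uses, is that $\Lambda_1>0$ together with the tiny upper bound forces $2h\log\alpha_1-2j\log\alpha_2<\Lambda_1-\log\alpha_3<0$, i.e.\ $h\log\alpha_1<j\log\alpha_2$; equivalently $|b_1|A_1<|b_2|A_2$. With that correction your argument matches the paper line for line.
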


To get a sharper bound on $ac$ and $h$, which we need later, we will use the Proposition \ref{matveev_prop}\@ together with a tool due to Mignotte \cite{mignote} and then on some of the cases, we will use Laurent's theorem. First, we will state Mignotte's theorem and show how can it be applied to $D(4)$-quintuples. We aim to give the most general algorithm to find appropriate parameters, so it can be clear how we can easily repeat the procedure multiple times to get better results.

\begin{theorem}[Mignotte]\label{mignote}
We observe three non-zero algebraic numbers $\alpha_1$, $\alpha_2$ and $\alpha_3$, which are either all real and greater than $1$ or all complex of modulus one and all different from $1$. Moreover, we assume that either the three numbers $\alpha_1$, $\alpha_2$ and $\alpha_3$ are multiplicatively independent, or two of these numbers are multiplicatively independent and the third one is a root of unity. Put
$$\mathcal{D}=[\mathbb{Q}(\alpha_1,\alpha_2,\alpha_3):\mathbb{Q}]/[\mathbb{R}(\alpha_1,\alpha_2,\alpha_3):\mathbb{R}].$$
We also consider three positive coprime rational integers $b_1,b_2,b_3$, and the linear form
$$\Lambda=b_2\log \alpha_2-b_1\log \alpha_1-b_3\log\alpha_3,$$
where the logarithms of $a_i$ are arbitrary determinations of the logarithm, but which are all real or all purely imaginary.
And we assume also that
$$b_2|\log \alpha_2|=b_1|\log \alpha_1|+b_3|\log\alpha_3| \pm |\Lambda|.$$
We put
$$d_1=\gcd(b_1,b_2),\: d_3=\gcd(b_3,b_2),\: b_1=d_1b_1',\: b_2=d_1b_2'=d_3b_2'',\: b_3=d_3b_3''. $$
Let $\rho>e$ be a real number and put $\lambda=\log \rho$. Let $a_1$, $a_2$ and $a_3$ be real numbers such that
$$a_i\geq \rho |\log \alpha_i|-\log |\alpha_i|+2\mathcal{D} h(\alpha_i),\quad i=1,2,3,$$
and assume further that
$$\Omega:=a_1a_2a_3\geq 2.5,\quad \textit{and}\quad A:=\min\{a_1,a_2,a_3\}\geq 0.62.$$
Let $K$, $L$ and $M$ be positive integers with
$$L\geq 4+\mathcal{D},\quad K=\lfloor M\Omega L \rfloor,\quad \textit{where } \,M\geq 3.$$
Let $0<\chi\leq 2$ be fixed.  Define
\begin{align*}
c_1&=\max\left\{ (\chi ML)^{2/3},\sqrt{2ML/A}\right\},\\
c_2&=\max\left\{2^{1/3}(ML)^{2/3}, \sqrt{M/A}L \right\},\\
c_3&=(6M^2)^{1/3}L,
\end{align*}
and then put
\begin{align*}
R_1&=\lfloor c_1a_2a_3 \rfloor,\quad S_1=\lfloor c_1a_1a_3 \rfloor, \quad T_1=\lfloor c_1a_1a_2 \rfloor ,\\
R_2&=\lfloor c_2a_2a_3 \rfloor,\quad S_2=\lfloor c_2a_1a_3 \rfloor, \quad T_2=\lfloor c_2a_1a_2 \rfloor ,\\
R_3&=\lfloor c_3a_2a_3 \rfloor,\quad S_3=\lfloor c_3a_1a_3 \rfloor, \quad T_3=\lfloor c_3a_1a_2 \rfloor .
\end{align*}
Let also
$$R=R_1+R_2+R_3+1,\quad S=S_1+S_2+S_3+1,\quad T=T_1+T_2+T_3+1.$$
Define
$$c_0=\max\left\{ \frac{R}{La_2a_3},\frac{S}{La_1a_3},\frac{T}{La_1a_2}\right\}.$$
Finally, assume that
\begin{multline}\label{mignote_nejednakost}
\left(\frac{KL}{2}+\frac{L}{4}-1-\frac{2K}{3L} \right)\lambda+2\mathcal{D}\log 1.36\\
\geq (\mathcal{D}+1)\log L+3gL^2c_0\Omega+\mathcal{D}(K-1)\log \widetilde{b} +2\log K,
\end{multline}
where
$$g=\frac{1}{4}-\frac{K^2L}{12RST},\quad b'=\left(\frac{b_1'}{a_2}+\frac{b_2'}{a_1} \right)\left( \frac{b_3''}{a_2}+\frac{b_2''}{a_3}\right),\quad \widetilde{b}=\frac{e^3c_0^2\Omega^2L^2}{4K^2}\times b'.$$
Then either
\begin{equation}\label{mignote_lambda_ocjena}
\log|\Lambda|>-(KL+\log(3KL))\lambda,
\end{equation}
\begin{description}
\item [or (A1)] there exist two non-zero rational integers $r_0$ and $s_0$ such that
$$r_0b_2=s_0b_1$$
with
$$|r_0|\leq \frac{(R_1+1)(T_1+1)}{\mathcal{M}-T_1}\quad \textit{and}\quad|s_0|\leq  \frac{(S_1+1)(T_1+1)}{\mathcal{M}-T_1} $$
where
\begin{align*}
\mathcal{M}&=\max\{R_1+S_1+1,S_1+T_1+1,R_1+T_1+1,\chi \mathcal{V} \},\\
\mathcal{V} &=\sqrt{(R_1+1)(S_1+1)(T_1+1)},
\end{align*}

\item [or(A2)] there exist rational integers $r_1$, $s_1$, $t_1$ and $t_2$, with $r_1s_1\neq 0$ such that
$$(t_1b_1+r_1b_3)s_1=r_1b_2t_2,\quad \gcd(r_1,t_1)=\gcd(s_1,t_2)=1,$$
which also satisfy
\begin{align*}
|r_1s_1|&\leq \delta \cdot \frac{(R_1+1)(S_1+1)}{\mathcal{M}-\max\{R_1,S_1\}},\\
|s_1t_1|&\leq \delta \cdot \frac{(S_1+1)(T_1+1)}{\mathcal{M}-\max\{S_1,T_1\}},\\
|r_1t_2|&\leq \delta \cdot \frac{(R_1+1)(T_1+1)}{\mathcal{M}-\max\{R_1,T_1\}},
\end{align*}
where $\delta=\gcd(r_1,s_1)$. Moreover, when $t_1=0$ we can take $r_1=1$, and  when $t_2=0$ we can take $s_1=1$.
\end{description}
\end{theorem}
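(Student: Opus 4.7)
The plan is to follow the now-classical Baker--Feldman--Waldschmidt framework for lower bounds on linear forms in logarithms of algebraic numbers, specialised to the three-variable case with the refinements due to Laurent and Mignotte. I start from the contrapositive: assume that \eqref{mignote_lambda_ocjena} fails, i.e.\ $|\Lambda|$ is extremely small, and also that neither (A1) nor (A2) holds. Working over the number field $\mathcal{K}=\mathbb{Q}(\alpha_1,\alpha_2,\alpha_3)$, I build an auxiliary polynomial
$$P(X_0,X_1,X_2,X_3)=\sum_{\lambda=0}^{L-1}\sum_{r=0}^{R-1}\sum_{s=0}^{S-1}\sum_{t=0}^{T-1}p(\lambda,r,s,t)\,X_0^{\lambda}X_1^{r}X_2^{s}X_3^{t}$$
with unknown integer coefficients $p(\lambda,r,s,t)$, and consider the function
$$\Phi(z)=P\!\left(z,\alpha_1^{b_2z},\alpha_2^{b_1z},\alpha_3^{b_2z}\right)$$
at the integer points $z=0,1,\dots,K-1$. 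The parameters $K,L,R,S,T$ and the auxiliary constants $c_1,c_2,c_3$ in the statement are precisely those needed for the Siegel/interpolation-determinant step to succeed: one produces non-trivial integer coefficients of absolute value bounded by (essentially) $\exp(3gL^2c_0\Omega)$ such that $\Phi$ has a zero of high multiplicity at each chosen point.

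The argument then has two opposing halves. On the analytic side, the assumed smallness of $\Lambda$ together with a Schwarz-type maximum modulus estimate on a disc of radius $\rho>e$ (this is where $\lambda=\log\rho$ enters, along with the hypothesis $a_i\geq \rho|\log\alpha_i|-\log|\alpha_i|+2\mathcal{D}h(\alpha_i)$) gives a strong upper bound for suitable derivatives of $\Phi$. On the arithmetic side, any such derivative value, being an algebraic number in $\mathcal{K}$ of controlled degree and height, obeys a Liouville-type lower bound involving $\mathcal{D}(K-1)\log\widetilde b$, the $h(\alpha_i)$, and the coefficient sizes. Combining the two, the fundamental inequality \eqref{mignote_nejednakost} is exactly what forces a contradiction: the left-hand side measures the analytic ``budget'' $\bigl(\tfrac{KL}{2}+\tfrac{L}{4}-1-\tfrac{2K}{3L}\bigr)\lambda$, and the right-hand side is the arithmetic cost.

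The contradiction is avoided only if the auxiliary polynomial $P$ itself, after the descent, vanishes on a large enough subset of the subgroup generated by $(1,\alpha_1^{b_1},\alpha_2^{b_2},\alpha_3^{b_3})$ in $\mathbb{G}_m^{3}$. At this point one invokes a zero estimate on $\mathbb{G}_a\times\mathbb{G}_m^{3}$ (Philippon--Waldschmidt type), which concludes that this subgroup must lie in a proper algebraic subgroup. Translating such a subgroup back into a multiplicative relation between $\alpha_1^{b_1}$, $\alpha_2^{b_2}$, $\alpha_3^{b_3}$ produces exactly the two alternatives (A1) and (A2); the quantitative bounds on $|r_0|,|s_0|$ and on $|r_1 s_1|,|s_1 t_1|,|r_1 t_2|$ come from the explicit volume/height estimates attached to $\mathcal{M}$, $\mathcal V$ and the quantities $R_1,S_1,T_1$ built out of $c_1$.

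The main obstacle will not be the overall architecture, which is now standard after decades of work by Baker, Waldschmidt, Laurent, and others, but the bookkeeping of constants: the sharp coefficient $\tfrac{1}{2}$ in front of $KL\lambda$, the correction $-\tfrac{2K}{3L}$, and the delicate factor $2\mathcal{D}\log 1.36$ all rely on Mignotte's refined use of Kummer descent and on the improved Fel'dman-type polynomial estimates. Rather than reproving these, I would invoke Mignotte's paper for the explicit constants and devote my own effort only to verifying that, in every concrete application in Section~7, the chosen parameters $\rho,K,L,M,\chi$ indeed satisfy \eqref{mignote_nejednakost} with a non-trivial margin and that the degenerate alternatives (A1), (A2) can be excluded separately using the arithmetic structure of the $D(4)$-quintuple.
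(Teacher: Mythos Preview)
The paper does not prove this theorem at all: it is quoted verbatim from Mignotte's preprint \cite{mignote} as an external tool, and the paper's contribution in Section~7 is solely to \emph{apply} it to the linear form $\Lambda=-\Lambda_1$ by choosing admissible parameters $\rho,L,M,\chi$ and then handling the degenerate alternatives (A1), (A2) via Laurent's Theorem~\ref{laurent}. So there is no ``paper's own proof'' to compare your sketch against.

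Your outline of the Baker--Waldschmidt--Laurent--Mignotte machinery (auxiliary interpolation determinant, Schwarz extrapolation on a disc of radius $\rho$, Liouville lower bound, zero estimate on $\mathbb{G}_a\times\mathbb{G}_m^3$ yielding the subgroup alternatives) is a faithful high-level description of how Mignotte actually proves the result in \cite{mignote}, and you are right that the real difficulty lies entirely in the constant-tracking. But note that your last paragraph already concedes the correct stance for this paper: one should simply invoke \cite{mignote} for Theorem~\ref{mignote} and reserve all effort for verifying inequality~\eqref{mignote_nejednakost} and disposing of (A1) and (A2) in the specific $D(4)$-quintuple setting. That is exactly what the paper does, so no independent proof of the theorem is expected or needed here.
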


We consider the linear form
$$\Lambda=-\Lambda_1=2j\log \alpha_2-2h \log \alpha_1 -\log \alpha_3.$$
It is important to notice that we have $c>b>10^5$.\\
As before we have
$$\mathcal{D}=4,\quad b_1=2h,\quad b_2=2j,\quad b_3=1,$$
and we can again take
$$h(\alpha_1)=\frac{1}{2}\log \alpha_1,\quad h(\alpha_2)=\frac{1}{2}\log \alpha_2, \quad h(\alpha_3)<\log c.$$
\\
Observe that
$$\log \alpha_3<\log\left(1+\sqrt{\frac{a}{b}} \right)<\log 2<0.694.$$
Now we have to choose $a_i\geq \rho |\log \alpha_i|-\log |\alpha_i|+2\mathcal{D} h(\alpha_i)$ for $i\in\{1,2,3\}$. In each case we have $|\log \alpha_i|=\log |\alpha_i|=\log \alpha_i$. Let $i=1$, then
$$
a_1\geq \rho \log \alpha_1-\log \alpha_1+ 4\cdot \log \alpha_1=(\rho+3)\log \alpha_1
$$
and similar observation is true for $i=2$. For $i=3$ we have
$$
a_3\geq \rho \log \alpha_3-\log \alpha_3+2\cdot 4\cdot \log c,
$$
so we see that we can take
\begin{align*}
a_1&=(\rho+3)\log \alpha_1\\
a_2&=(\rho+3)\log \alpha_2\\
a_3&=8(\log c+0.08675(\rho-1)).
\end{align*}
For the simplicity of the proof we will give intervals for parameters $M$, $L$ and $\rho$, but we will not give their explicit values, because we will search within these intervals to find the values which give us the best possible bound on index $h$. From now on, when ever is needed, we assume that $\chi=2$, $\rho \in [5.5,14]$, $ L \in [700,1500]$, and $M\in [3,10]$. These intervals were chosen since they seemed sufficient, after observing some random values, for finding an optimal value for upper bound on $h$ and also because they satisfy all conditions needed, as we will prove.
\\ Now, let us observe which conditions these parameters must satisfy so we can use Theorem \ref{mignote}.
\par
It is easy to see that we always have $a_1<a_2$, so $A=\min\{a_1,a_2,a_3\}=\min\{a_1,a_3\}$. If $A=a_1$ we have $A=(\rho+3)\log \alpha_1>5\log \sqrt{ab}$, and if $A=a_3$ then $A>8\log c$, so in either case it is $A\geq 0.62$. \\
Moreover, it is also easy to see that we always have $\Omega=a_1\cdot a_2 \cdot a_3 >2.5$. \\
Values $c_1$, $c_2$ and $c_3$ can easily be calculated for specific values of the parameters.

We get an upper bound for $c_0$ after observing that
$$
\frac{R}{La_2a_3}=\frac{R_1+R_2+R_3+1}{La_2a_3}<\frac{c_1+c_2+c_3+1}{L}
$$
and since the same is true for $S$ and $T$, we have $c_0<\frac{c_1+c_2+c_3+1}{L}.$\\
Also  $$\Omega=a_1a_2a_3=8(\rho+3)^2\log \alpha_1 \log \alpha_2 (\log c+0.08675(\rho-1))$$ and
$$K=\lfloor M\Omega L \rfloor=\lfloor 8ML(\rho+3)^2\log \alpha_1 \log \alpha_2 (\log c+0.08675(\rho-1))\rfloor. $$
To see when inequality (\ref{mignote_nejednakost}) holds, let us observe it by parts:
\\
We have $M \Omega L-1< K\leq M\Omega L$ so
\begin{align*}
&\left(\frac{KL}{2}+\frac{L}{4}-1-\frac{2K}{3L} \right)\lambda+2\mathcal{D}\log 1.36\\
&> M\Omega L\left(\frac{L}{2}-\frac{2}{3L} \right)\lambda-\left(\frac{L}{2}-\frac{2}{3L} \right)\lambda+\left( \frac{L}{4}-1\right)\lambda+2\mathcal{D}\log 1.36\\
&=8ML(\rho+3)^2\left(\frac{L}{2}-\frac{2}{3L} \right)\lambda \log \alpha_1 \log \alpha_2 \log c\\
&+8ML(\rho+3)^2\left(\frac{L}{2}-\frac{2}{3L} \right)\lambda\cdot  0.08675(\rho-1) \log \alpha_1 \log \alpha_2 \\
&+\left( \frac{L}{4}-1\right)\lambda+2\mathcal{D}\log 1.36-\left(\frac{L}{2}-\frac{2}{3L} \right)\lambda.
\end{align*}
On the other hand, for the expressions on the right hand side of the inequality (\ref{mignote_nejednakost}) it holds:
\begin{enumerate}
\item Since we can use $ac<1.08915\cdot10^{34}$ we get a numerical value
$$(\mathcal{D}+1)\log L+2\log K\leq 5\log L+2\log (8ML(\rho+3)^2\log ^2 \sqrt{ac+4}\log ac).$$
\item Also, from $g=\frac{1}{4}-\frac{K^2L}{12RST}<\frac{1}{4}$ we get
\begin{align*}
3gL^2c_0\Omega<\frac{3}{4}L^2c_0\Omega&\leq  \frac{3}{4}L^2c_0\cdot8(\rho+3)^2 \log \alpha_1 \log \alpha_2 \log c\\
&+ \frac{3}{4}L^2c_0\cdot8\cdot 0.08675(\rho-1)(\rho+3)^2 \log \alpha_1 \log \alpha_2 .
\end{align*}
\item To approximate the last part of the right hand side of the inequality, observe that from $\log \alpha_3<2\log \alpha_1$, since
$\Lambda_1>0$, we have $2(h+1)\log \alpha_1-2j\log \alpha_2>0$, i.e.
$$\frac{b_2}{a_1}<\frac{b_1+2}{a_2}.$$
Also, since $2\log \alpha_2>\log c$ and $\rho\geq 5.5$, we have $\frac{b_3}{a_2}<\frac{2}{a_3}$ and since $j<h$ we get
$$
b'<\frac{(4h+2)(2h+2)}{8(\rho+3)\log \alpha_2 \log c}.
$$
Using $c>10^5$, $h<6.95745\cdot 10^{16}$ and values of the parameters, we can calculate an upper bound for $b'$.\\
Then we have
$$\frac{K}{\Omega}>\frac{M\Omega L-1}{\Omega}>ML-1$$
and
$$\log \widetilde{b}<\log \left(\frac{c_0^2}{4}e^3\frac{1}{(ML-1)^2}L^2b' \right).$$
Finally,
\begin{align*}
\mathcal{D}(K-1)\log \widetilde{b}&<4M\Omega L \log \widetilde{b}\\
&=32ML(\rho+3)^2\log \widetilde{b} \log \alpha_1 \log \alpha_2\log c\\
&+32ML(\rho+3)^2\log \widetilde{b}\cdot 0.08675(\rho-1) \log \alpha_1 \log \alpha_2.
\end{align*}
\end{enumerate}
\par As we can see from above, we have expressions of the form  $\log \alpha_1 \log \alpha_2\log c$, $\log \alpha_1 \log \alpha_2$ and numerical values, and to see if some selected values of the parameters $M$, $L$ and $\rho$ satisfy inequality (\ref{mignote_nejednakost}) it is enough to compare coefficients of these expressions.
For each selection of values for the parameters $M$, $L$ and $\rho$ which satisfy these condition, we can apply Theorem \ref{mignote}\@ and have that either cases  \textit{(A1)} or \textit{(A2)} hold or inequality (\ref{mignote_lambda_ocjena}) holds. Let us first observe this inequality.
We then have
\begin{align*}
\log|-\Lambda_1|&>-(KL+\log(3KL))\lambda\\
&\geq -(ML^2\Omega+\log(3ML^2\Omega))\log \rho,
\end{align*}
and on the other hand,
$$\log|-\Lambda_1|<-4j\log \alpha_2<-4h\log \alpha_1$$
which can be proven by using Lemma \ref{lin_forma_granice}\@, so
$$4h\log \alpha_1<(ML^2\Omega+\log(3ML^2\Omega))\log \rho.$$
Notice that $ML^2\Omega>8ML^2(\rho+3)^2\log \sqrt{ab}\log \sqrt{ac}\log c>3.81\cdot 10^{10}$, and for  $x>3.81\cdot 10^{10}$ we have $\log 3x<6.7\cdot 10^{-10}x$, so we can observe
$$4h\log \alpha_1<ML^2\Omega(1+6.7\cdot 10^{-10})\log \rho,$$ i.e.\@
$$h<2ML^2(\rho+3)^2\log \rho (1+6.7\cdot 10^{-10})\left(1+\frac{0.08675}{\log 10^5}(\rho-1) \right)\log \alpha_2 \log c.$$
From now on, to shorten an expression $x$, with $G(x)$ we will denote upper bound for the numerical value we get by inserting all parameters in the expression except those which contain values of a triple $\{a,b,c\}$. In this expression with $G(h)$ we denote
$$G(h):=2ML^2(\rho+3)^2\log \rho (1+6.7\cdot 10^{-10})\left(1+\frac{0.08675}{\log 10^5}(\rho-1) \right),$$
so we have $h<G(h)\cdot \log \alpha_2 \log c.$
\par If the inequality (\ref{mignote_lambda_ocjena}) does not hold, then one of the cases \textit{(A1)} or \textit{(A2)} holds. \\
Notice that  $\mathcal{M}>\chi \mathcal{V}>\chi c_1^{3/2}a_1a_2a_3.$ For each $a_i$ we calculate the lower bounds
\begin{align*}
&a_2>a_1>(\rho+3)\log 10^{5/2}:=A_{1,2},
&a_3>8(\log 10^5+0.08675(\rho-1)):=A_3.
\end{align*}
Observe that since $a_2>a_1$ then $\max\{R_1,S_1\}=R_1$, but values of $\max\{S_1,T_1\}$ and $\max\{R_1,T_1\}$ depend on the values of a triple $\{a,b,c\}$, so we must address these cases separately.\\
Let us denote and observe
\begin{align*}
B_1&:=\frac{(R_1+1)(S_1+1)}{\mathcal{M}-\max\{R_1,S_1\}}<\frac{(c_1a_2a_3+1)(c_1a_1a_3+1)}{\chi c_1^{3/2}a_1a_2a_3-c_1a_2a_3}\\
&=\frac{1+\frac{1}{c_1a_2a_3}}{\frac{\chi}{2}-\frac{1}{2c_1^{1/2}a_1}}\left(0.5c_1^{1/2}+\frac{1}{2c_1^{1/2}a_1a_3} \right)a_3\\
&<\frac{1+\frac{1}{c_1A_{1,2}A_3}}{\frac{\chi}{2}-\frac{1}{2c_1^{1/2}A_{1,2}}}\left(0.5c_1^{1/2}+\frac{1}{2c_1^{1/2}A_{1,2}A_3} \right)8\left( 1+ \frac{0.08675}{\log 10^5}(\rho-1) \right)\log c\\
&=:G(B_1)\cdot \log c.
\end{align*}
Let us assume that $\max\{S_1,T_1\}=S_1$. Then
\begin{align*}
B_2&:=\frac{(S_1+1)(T_1+1)}{\mathcal{M}-\max\{S_1,T_1\}}<\frac{(c_1a_1a_3+1)(c_1a_1a_2+1)}{\chi c_1^{3/2}a_1a_2a_3-c_1a_1a_3}\\
&=\frac{1+\frac{1}{c_1a_1a_3}}{\frac{\chi}{2}-\frac{1}{2c_1^{1/2}a_2}}\left(0.5c_1^{1/2}+\frac{1}{2c_1^{1/2}a_2^2} \right)a_2\\
&<\frac{1+\frac{1}{c_1A_{1,2}A_3}}{\frac{\chi}{2}-\frac{1}{2c_1^{1/2}A_{1,2}}}\left(0.5c_1^{1/2}+\frac{1}{2c_1^{1/2}A_{1,2}^2} \right)(\rho+3)\log \alpha_2\\
&=:G(B_2^{(1)})\cdot \log \alpha_2.
\end{align*}
On the other hand, if $\max\{S_1,T_1\}=T_1$, then
\begin{align*}
B_2&=\frac{(S_1+1)(T_1+1)}{\mathcal{M}-\max\{S_1,T_1\}}<\frac{(c_1a_1a_3+1)(c_1a_1a_2+1)}{\chi c_1^{3/2}a_1a_2a_3-c_1a_1a_2}\\
&=\frac{1+\frac{1}{c_1a_1a_2}}{\frac{\chi}{2}-\frac{1}{2c_1^{1/2}a_3}}\left(0.5c_1^{1/2}+\frac{1}{2c_1^{1/2}a_2a_3} \right)a_2\\
&<\frac{1+\frac{1}{c_1A_{1,2}^2}}{\frac{\chi}{2}-\frac{1}{2c_1^{1/2}A_{3}}}\left(0.5c_1^{1/2}+\frac{1}{2c_1^{1/2}A_{1,2}A_3} \right)(\rho+3)\log \alpha_2\\
&=:G(B_2^{(2)})\cdot \log \alpha_2,
\end{align*}
where we gave these expressions in the form where it is clear that they are decreasing in variables $a_1$, $a_2$ and $a_3$, so we can use lower bounds of these variables to get an upper bound on $B_2$. Observe that
$$G(B_2^{(1)})=\frac{(c_1A_{1,2}A_3+1)(c_1A_{1,2}^2+1)}{\chi c_1^{3/2}A_{1,2}^2 A_3-c_1A_{1,2}A_3},\quad G(B_2^{(2)})=\frac{(c_1A_{1,2}A_3+1)(c_1A_{1,2}^2+1)}{\chi c_1^{3/2}A_{1,2}^2 A_3-c_1A_{1,2}^2},$$
and since these expressions only differ in their denominators, it is easy to see that if $A_3>A_{1,2}$, then $G(B_2^{(1)})>G(B_2^{(2)})$. Inequality $A_3>A_{1,2}$ will hold for $\rho \in [5.5,14]$, which is a reason why we have chosen that interval for our observations.

Now we define $G(B_2)=\max\{G(B_2^{(1)}),G(B_2^{(2)})\}$, so
$$B_2<G(B_2)\cdot \log \alpha_2.$$

Similarly, we will first assume that $\max\{R_1,T_1\}=R_1$, so
\begin{align*}
B_3&:=\frac{(R_1+1)(T_1+1)}{\mathcal{M}-\max\{R_1,T_1\}}<\frac{(c_1a_2a_3+1)(c_1a_1a_2+1)}{\chi c_1^{3/2}a_1a_2a_3-c_1a_2a_3}\\
&=\frac{1+\frac{1}{c_1a_2a_3}}{\frac{\chi}{2}-\frac{1}{2c_1^{1/2}a_1}}\left(0.5c_1^{1/2}+\frac{1}{2c_1^{1/2}a_1a_2} \right)a_2\\
&<\frac{1+\frac{1}{c_1A_{1,2}A_3}}{\frac{\chi}{2}-\frac{1}{2c_1^{1/2}A_{1,2}}}\left(0.5c_1^{1/2}+\frac{1}{2c_1^{1/2}A_{1,2}^2} \right)(\rho+3)\log \alpha_2\\
&=:G(B_3^{(1)})\cdot \log \alpha_2,
\end{align*}
and if $\max\{R_1,T_1\}=T_1$, then
\begin{align*}
B_3&=\frac{(R_1+1)(T_1+1)}{\mathcal{M}-\max\{R_1,T_1\}}<\frac{(c_1a_2a_3+1)(c_1a_1a_2+1)}{\chi c_1^{3/2}a_1a_2a_3-c_1a_1a_2}\\
&=\frac{1+\frac{1}{c_1a_1a_2}}{\frac{\chi}{2}-\frac{1}{2c_1^{1/2}a_3}}\left(0.5c_1^{1/2}+\frac{1}{2c_1^{1/2}a_2a_3} \right)a_2\\
&<\frac{1+\frac{1}{c_1A_{1,2}^2}}{\frac{\chi}{2}-\frac{1}{2c_1^{1/2}A_3}}\left(0.5c_1^{1/2}+\frac{1}{2c_1^{1/2}A_{1,2}A_3} \right)(\rho+3)\log \alpha_2\\
&=:G(B_3^{(2)})\cdot \log \alpha_2.
\end{align*}
Analogously,  $G(B_3)=\max\{G(B_3^{(1)}),G(B_3^{(2)})\}$ and
$$B_3<G(B_3)\cdot \log \alpha_2.$$
Notice that since we have chosen the same lower bounds on $a_1$ and $a_2$, we have $G(B_2^{(1)})=G(B_3^{(1)})$ and $G(B_2^{(2)})=G(B_3^{(2)})$, and also $G(B_2)=G(B_3)$.
\par
Now, let us observe the case \textit{(A2)}. Here we have some integers $r_1$, $s_1$, $t_1$ and $t_2$, such that
$$(t_1b_1+r_1b_3)s_1=r_1b_2t_2,\quad \gcd(r_1,t_1)=\gcd(s_1,t_2)=1,$$
and
$$|r_1s_1|\leq \delta B_1,\quad |s_1t_1|\leq \delta B_2, \quad |r_1t_2|\leq \delta B_3, \quad \delta=\gcd(r_1,s_1).$$
We have $r_1=\delta r_1'$ and $s_1=\delta s_1'$. Since $b_1=2h$, $b_2=2j$ and $b_3=1$ we also have
$$s_1't_1\cdot 2h+\delta r_1' s_1' =r_1't_2\cdot 2j,$$
and
$$|\delta r_1's_1|\leq B_1,\quad |s_1't_1|\leq B_2, \quad |r_1't_2|\leq B_3.$$

First, let us observe the case when $t_2=0$. Then $\gcd(s_1,t_2)=s_1=1$ and from $(t_1b_1+r_1b_3)s_1=0$, since $s_1\neq 0$, we get $t_1b_1=-r_1b_3$, i.e.\@ $2ht_1=-r_1$. Since $\gcd(r_1,t_1)=1$, we conclude that $t_1=\mp 1$ and $r_1=\pm 2h$. Also, we see from observations stated before that
$$|r_1s_1|=2h\leq B_1<\frac{\left( c_1A_{1,2}+\frac{1}{A_3}\right)(c_1A_{1,2}A_3+1)}{\chi c_1^{3/2}A_{1,2}^2A_3-c_1A_{1,2}A_3}a_3.$$
Since $\chi=2$ and $A=\min\{a_1,a_3\}>1$, we have that
$$c_1=\max\left\{ (\chi ML)^{2/3},\sqrt{2ML/A}\right\}=(2ML)^{2/3}.$$
If we use minimal and maximal values of our parameters $M$ i $L$, we get $$260<c_1<966.$$
Using these values and lower bounds $A_{1,2}>48.9$, $A_3>8\log 10^5>92.1$ and the fact that $a_3<8(1+\frac{0.08675}{\log 10^5}\cdot 13)\log c$, we get the inequality
$$B_1<979.86\log c.$$
So, we see that the inequality $2h<979.86\log c$ holds. From  Proposition \ref{indeksi_ac} we have that $h>0.666662\sqrt{ac}\geq 0.666662\sqrt{c}$, which implies
$$\sqrt{c}<734.91\log c.$$
Solving this inequality in variable $c$, we get $c<1.9701\cdot 10^8$. We will see that this upper bound is much lower than the upper bound we will get in case $t_2\neq 0$.

Now, let us assume that $t_2\neq 0$. We can multiply the linear form $\Lambda_1$ with factor $r_1't_2\neq 0$, and after rearranging we get a linear form in two logarithms
\begin{equation}\label{lin_forma_mignote_A2}
r_1't_2\Lambda_1=2h\log \left( \alpha_1^{r_1't_2}\cdot \alpha_2^{-s_1't_1}\right)-\log\left(\alpha_2^{\delta r_1' s_1'}\cdot \alpha_3 ^{-r_1' t_2} \right),
\end{equation}
where $\delta=\gcd (r_1,s_1)$, $r_1'=\frac{r_1}{\delta}$ and $s_1'=\frac{s_1}{\delta}$. On this form we would like to use the next result from \cite{laurent}.
\begin{theorem}[Laurent]\label{laurent}
Let $a_1'$, $a_2'$, $h'$, $\varrho$ and $\mu$ be real numbers with $\varrho>1$ and $1/3\leq \mu \leq 1$. Set
\begin{align*}
\sigma&=\frac{1+2\mu-\mu^2}{2},\quad \lambda'=\sigma \log \varrho,\quad H=\frac{h'}{\lambda '}+\frac{1}{\sigma},\\
\omega&=2\left( 1+\sqrt{1+\frac{1}{4H^2}}\right),\quad \theta=\sqrt{1+\frac{1}{4H^2}}+\frac{1}{2H}.
\end{align*}
Consider the linear form
$$\Lambda=b_2 \log \gamma_2-b_1 \log \gamma_1,$$
where $b_1$ and $b_2$  are positive integers. Suppose that $\gamma_1$ are $\gamma_2$ multiplicatively independent. Put $D=[\mathbb{Q}(\gamma_1,\gamma_2):\mathbb{Q}]/[\mathbb{R}(\gamma_1,\gamma_2):\mathbb{R}]$, and assume that
\begin{align*}
&h' \geq \max \left\{ D\left(\log\left( \frac{b_1}{a_2'}+\frac{b_2}{a_1'}\right)+\log \lambda'+1.75\right)+0.06,\lambda',\frac{D\log 2}{2}\right\},\\
& a_i'\geq \max\{1,\varrho|\log \gamma_i|-\log|\gamma_i|+2Dh(\gamma_i)\}, \quad i=1,2,\\
&a_1'a_2'\geq \lambda'^2.
\end{align*}
Then
$$\log|\Lambda|\geq -C\left(h'+\frac{\lambda'}{\sigma} \right)^2a_1'a_2'-\sqrt{\omega \theta}\left(h'+\frac{\lambda'}{\sigma} \right)-\log\left(C' \left(h'+\frac{\lambda'}{\sigma} \right)^2a_1'a_2' \right)$$
with
\begin{align*}
&C=\frac{\mu}{\lambda'^3 \sigma}\left(\frac{\omega}{6}+\frac{1}{2}\sqrt{\frac{\omega^2}{9}+\frac{8\lambda'\omega^{5/4}\theta^{1/4}}{3\sqrt{a_1'a_2'}H^{1/2}}+\frac{4}{3}\left( \frac{1}{a_1'}+\frac{1}{a_2'}\right)\frac{\lambda'\omega}{H}   } \right)^2   ,\\
&C'=\sqrt{\frac{C\sigma \omega \theta}{\lambda'^3 \mu}}.
\end{align*}
\end{theorem}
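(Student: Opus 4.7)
The plan is to follow the interpolation-determinant method due to Laurent; since the statement is quoted from \cite{laurent}, the task is really to describe the proof architecture. The approach replaces Baker's classical auxiliary polynomial by a determinant whose analytic and arithmetic sizes can be compared directly, which is what produces the clean quadratic main term $C(h'+\lambda'/\sigma)^2 a_1'a_2'$ in the bound.

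First I would fix auxiliary positive integers $L$, $R$, $S$ depending on the free parameters $\mu$, $\varrho$, $h'$, $a_1'$, $a_2'$, and form the square matrix whose rows are indexed by pairs $(\lambda_1,\lambda_2)$ of nonnegative integers with $\lambda_1+\lambda_2<L$ and whose columns are indexed by integer points $(r,s)$ in a rectangle of sides $R$ and $S$. The $(\lambda,(r,s))$-entry is taken to be $\bigl(rb_2+sb_1\bigr)^{\lambda_1}\gamma_1^{r\lambda_2}\gamma_2^{s\lambda_2}$. This is the standard interpolation matrix attached to the entire function $\varphi(z)=\gamma_1^{zb_2}\gamma_2^{-zb_1}$, whose deviation from $1$ on small discs is controlled by $|\Lambda|$.

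Next I would prove the analytic upper bound. The smallness of $|\Lambda|$ forces $\varphi$ to be close to $1$ on a disc of radius comparable to $\max(R,S)$; a sharp Schwarz-lemma argument applied to the interpolation determinant $\Delta$, viewed as an exterior product of evaluations of $\varphi$ and its derivatives, then yields an estimate of the shape
\[
\log|\Delta|\ \le\ -C_1 L^2 a_1'a_2'+C_2 L+C_3 L^2|\Lambda|\cdot P(\mu,\varrho),
\]
where $P$ is polynomial in the parameters and the $L^2 a_1' a_2'$ term carries the improvement coming from $|\Lambda|$ being small. For the arithmetic lower bound, the entries lie in $\mathbb{Q}(\gamma_1,\gamma_2)$ of degree $D$, so clearing a common denominator produces an algebraic integer, and the Liouville inequality (the product of $|\sigma(\Delta)|$ over embeddings is at least $1$) gives
\[
\log|\Delta|\ \ge\ -C_4 L^2\bigl(Dh(\gamma_1)+Dh(\gamma_2)+\log\max(b_1,b_2)\bigr).
\]
Vanishing of $\Delta$ must be excluded separately: using the multiplicative independence of $\gamma_1,\gamma_2$ together with a two-variable Philippon--Waldschmidt style zero estimate, one shows that $\Delta=0$ would force a nontrivial multiplicative relation between $\gamma_1$ and $\gamma_2$.

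Combining the two bounds and optimizing over $\mu$, $\varrho$, $L$, $R$, $S$ yields the claim: the quantities $\sigma=(1+2\mu-\mu^2)/2$, $\lambda'=\sigma\log\varrho$, $\omega$, $\theta$ emerge from equating the leading terms, the constant $C$ appears as the leading coefficient, and $C'$ absorbs the logarithmic correction. I expect the main obstacle to be keeping the constants precisely as stated: it is the interplay between the leading $C(h'+\lambda'/\sigma)^2 a_1'a_2'$ term and the $\sqrt{\omega\theta}(h'+\lambda'/\sigma)$ correction that fixes the exact forms $\omega=2(1+\sqrt{1+1/(4H^2)})$ and $\theta=\sqrt{1+1/(4H^2)}+1/(2H)$, and reproducing the explicit expressions for $C$ and $C'$ requires the delicate bookkeeping carried out in \cite{laurent}.
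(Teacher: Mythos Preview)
The paper does not prove this theorem at all: it is simply quoted from \cite{laurent} as a ready-made tool and applied immediately afterwards. Your sketch of the interpolation-determinant method is a reasonable outline of how Laurent actually proves the result in the cited paper, so in that sense there is nothing wrong with it, but there is nothing to compare it against here --- the authors' ``proof'' is just the citation.
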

To apply Theorem \ref{laurent}\@ on the linear form (\ref{lin_forma_mignote_A2}) we must first check that the conditions of the theorem are satisfied. Since $\alpha_1$, $\alpha_2$ and $\alpha_3$ are multiplicatively independent, so are $\gamma_1$ and $\gamma_2$.
\\
Now we can assume here that $h\geq G(h)\cdot \log \alpha_2 \log c$ and aim to find the best possible result in this case. If the result we get is better than $h<G(h)\cdot \log \alpha_2 \log c$, we will take $G(h)\cdot \log \alpha_2 \log c$ as an upper bound for $h$.
\par Notice that,
\begin{align*}
h(\gamma_1)&\leq 0.5B_1\log \alpha_2+B_2\log c\\
&<(0.5G(B_1)+G(B_2))\log \alpha_2 \log c=: G(h(\gamma_1))\cdot \log \alpha_2 \log c,\\
h(\gamma_2)&\leq 0.5 B_2 \log \alpha_1+0.5 B_3 \log \alpha_2\\
&<B_3 \log \alpha_2< G(B_3)\cdot \log^2 \alpha_2=:G(h(\gamma_2))\cdot \log^2 \alpha_2 ,\\
|\log \gamma_1|&\leq B_1 \log \alpha_2+0.694 B_3\\
&\leq \left( G(B_1)+0.694\frac{G(B_3)}{\log 10^5} \right)\log \alpha_2 \log c=:G(|\log \gamma_1|)\cdot \log \alpha_2 \log c
\end{align*}
and
\begin{align*}
|\log \gamma_2|&<\frac{B_2+|\log \gamma_1|}{2h}<\frac{G(B_2)\log \alpha_2 +
G(|\log \gamma_1|)\cdot \log \alpha_2 \log c}{2h}\\
&<\frac{\left(\frac{G(B_2)}{\log 10^5}+
G(|\log \gamma_1|)\right) \log \alpha_2 \log c}{2G(h)\cdot \log \alpha_2 \log c}=\frac{\frac{G(B_2)}{\log 10^5}+
G(|\log \gamma_1|)}{2G(h)}=:G(|\log \gamma_2|).
\end{align*}
Now we would like to find which condition must parameters $\varrho$ and $\mu$ satisfy in order to apply Theorem \ref{laurent}\@ and to get the lowest possible upper bound on $h$. First we must choose $a_i'$, $i=1,2$, such that
$$a_i'\geq |\log \gamma_i|(\varrho+1)+8h(\gamma_i),\quad i=1,2.$$
We see that we can set
$$a_1'=(G(|\log \gamma_1|)(\varrho+1)+8G(h(\gamma_1)))\log \alpha_2 \log c=:G(a_1')\log  \alpha_2 \log c,$$
and
$$a_2'=\left(\frac{G(|\log \gamma_2|)}{\log^2 10^{5/2}}(\varrho+1)+8 G(h(\gamma_2))\right)\log^2 \alpha_2=:G(a_2')\log^2 \alpha_2.$$
We have
$$\frac{b_1}{a_2'}+\frac{b_2}{a_1'}\leq \frac{\frac{2}{G(a_2')}+\frac{2h}{G(a_1')}}{\log \alpha_2 \log c}\leq \frac{h\left(\frac{2}{210.81\cdot G(a_2')}+\frac{2}{G(a_1')}\right)}{\log \alpha_2 \log c}$$
where we used that since $c>10^5$ then $h>0.666662\sqrt{ac}>0.666662\sqrt{10^5}>210.81$.
Denote
$$G(F)=\frac{2}{210.81\cdot G(a_2')}+\frac{2}{G(a_1')}$$
and
$$F:=\frac{G(F)\cdot h}{\log \alpha_2 \log c}.$$
Since we will observe only values $\varrho\leq 100$, and since $\frac{D\log 2}{2}=2\log 2<1.4$ and $\lambda'<\frac{3}{2}\log \varrho<7$ we can take
$$h'=4(\log F+\log \lambda')+7.06.$$
Since we assumed that $h\geq G(h) \log \alpha_2 \log c$, we now have $F>G(F)\cdot G(h)$ which implies
$$H=\frac{h'}{\lambda'}+\frac{1}{\sigma}>\frac{4\log (G(F)\cdot G(h))}{\lambda'}+\frac{1}{\sigma}.$$
Using this, for specific values of the parameters $\varrho$ and $\mu$ we can calculate $\omega$, $\theta$, $C$ and $C'$ and by Theorem \ref{laurent}\@ we have
\begin{multline*}
\log |r_1't_2\Lambda_1|>\\
-C\left(h'+\frac{\lambda'}{\sigma} \right)^2a_1'a_2'-\sqrt{\omega \theta}\left(h'+\frac{\lambda'}{\sigma} \right)-\log\left(C' \left(h'+\frac{\lambda'}{\sigma} \right)^2a_1'a_2' \right).
\end{multline*}
Assume that $C'\leq 3C$ (which will be true in all our cases). It holds $\log 3x<10^{-3}x$ for $x\geq 10343$, and in all our cases we will have  $a_1'a_2'>10343$ and also $\sqrt{\omega \theta}<3$. Since we also have $\left(h'+\frac{\lambda'}{\sigma}\right)>1$, we can observe the inequality
$$\log |r_1't_2\Lambda_1|>-C\cdot G(a_1')\cdot G(a_2')(1.001+3\cdot 10^{-4}C^{-1})\left(h'+\frac{\lambda'}{\sigma} \right)^2(\log \alpha_2)^3\log c.$$

We wish to find a minimal positive real number $k$ for which the inequality $\log  \alpha_2<k\cdot \log \alpha_1$ holds. If we use that $\sqrt{ac}<\alpha_2$ and $\alpha_1<\sqrt{ab+4}$ we get $ac<(ab+4)^k$. From Proposition \ref{granicagornja_na_c} we have $ac<237.952b^3$, and since $b>10^5$ we find that inequality holds for $k=3.4753$.

Now, we see that we also have
$$\log |r_1't_2\Lambda_1|<\log B_3-4j\log \alpha_2<\log B_3-4h \log \alpha_1,$$
and since $\log \alpha_2<3.4753\log \alpha_1$, so
\begin{align*}
h&<\frac{3.4753}{4}\left(C\cdot G(a_1')\cdot G(a_2')(1.001+3\cdot 10^{-4}C^{-1})+\frac{\log B_3}{\log 10^5 (\log 10^{5/2})^3}\right)\cdot \\
&\cdot \left(h'+\frac{\lambda'}{\sigma} \right)^2\log^2  \alpha_2 \log c\\
&=:G(h2)\left(h'+\frac{\lambda'}{\sigma} \right)^2 \log^2 \alpha_2\log c.
\end{align*}
Multiplying this expression with $\frac{G(F)}{\log \alpha_2 \log c}$ yields
$$F<G(h2)\cdot G(F)\left(4\log F+4\log \lambda'+7.06+\frac{\lambda'}{\sigma} \right)^2\log \alpha_2$$
and if we insert $\log \alpha_2<\log \sqrt{ac+4}$ and an upper bound for $ac$ we will get an upper bound for $F$, denote it with $F_1$, i.e.\@ $F<F_1$. Now from the definition of $F$ we have
$$h<\frac{F_1}{G(F)}\log \alpha_2 \log c,$$
which gives us an upper bound on $h$ and our goal is to minimize a numerical value $\frac{F_1}{G(F)}$.
\par
As in \cite{petorke}, it is not difficult to see that in the case \textit{(A1)} one obtains smaller values than in the case \textit{(A2)} and therefore smaller upper bounds, so we see it is not necessary to calculate it. \par
Now, it remained to implement the described algorithm for the inequality (\ref{mignote_lambda_ocjena}) and the case \textit{(A2)}. We observed these values of the parameters, $\chi=2$ fixed, $\rho\in[5.5,14]$ with step $0.5$, $L\in [700,1500]$ with step $1$, $M\in [3,10]$ with step $0.1$ and after calculating the upper bound on $h$ by Theorem \ref{mignote}, we also consider all values  $\varrho\in [40,85]$ with step $1$ and $\mu \in [0.44,0.76]$ with step $0.01$ such that the coefficient $G(h2)$ is the least possible one. \\
In the first turn we used  $ac<1.08915\cdot 10^{34}$ and $h<6.95745\cdot 10^{16}$, and the best value was obtained for the parameters $\rho=11.5$, $ M=4.7$ and $ L=1043$ where we got $h<5.66642\cdot 10^9\log \alpha_2 \log c$, and for $\varrho=59$ and $\mu=0.63$ we got $h<4.85941\cdot 10^{10}\log \alpha_2 \log c$ in the case \textit{(A2)}. From this we have $ac<2.42372\cdot 10^{28}$ and $h<1.03788\cdot 10^{14}$.\\
Now these new upper bounds can be used for the second turn and the best value is obtained for the parameters $\rho=11$, $M=4.6$, $L=901$ where we got $h<4.13857\cdot 10^9 \log \alpha_2 \log c$, and for $\varrho=59$, $\mu=0.63$ we got $h<3.53075\cdot 10^{10}\log \alpha_2 \log c$. From this we obtain $ac<1.22705\cdot 10^{28}$ and $h<7.38475\cdot 10^{13}$.\\
We repeat a process three more times, and finally get that $ac<1.17732\cdot 10^{28}$, $h<3.46289\cdot 10^{10}\log \alpha_2 \log c$ and  $h<7.23357\cdot 10^{13}$. This upper bound will be good enough for final steps of the proof so we state the next proposition.
\begin{proposition}\label{granice_ac_i_h}
Let $\{a,b,c,d,e\}$ be a $D(4)$-quintuple, such that $a<b<c<d<e$. Then $ac<1.17732\cdot 10^{28}$. Also,
$h<3.46289\cdot 10^{10}\log \alpha_2 \log c$ and   $h<7.23357\cdot 10^{13}$.
\end{proposition}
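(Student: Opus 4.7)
The plan is to iteratively sharpen the bounds from Proposition \ref{matveev_prop} by applying Mignotte's theorem (Theorem \ref{mignote}) to the linear form $\Lambda=-\Lambda_1$ and, when Mignotte's conclusion \textit{(A2)} is triggered, following up with Laurent's two-variable theorem (Theorem \ref{laurent}) on the reduced form. Start from the input $ac<1.08915\cdot 10^{34}$, $h<6.95745\cdot 10^{16}$ together with $b>10^5$, $h>0.666662\sqrt{ac}$, and the height data $h(\alpha_1)=\tfrac12\log\alpha_1$, $h(\alpha_2)=\tfrac12\log\alpha_2$, $h(\alpha_3)<\log c$ recorded in the text preceding the proposition.

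First I would set up the Mignotte parameters with $\mathcal{D}=4$, $b_1=2h$, $b_2=2j$, $b_3=1$, and $a_i=(\rho+3)\log\alpha_i$ for $i=1,2$, $a_3=8(\log c+0.08675(\rho-1))$, checking the standing hypotheses $\Omega\geq 2.5$, $A\geq 0.62$. For each triple $(\rho,L,M)$ in the search grid $\rho\in[5.5,14]$, $L\in[700,1500]$, $M\in[3,10]$ (with $\chi=2$), I would verify the key inequality \eqref{mignote_nejednakost} by comparing, term by term, the coefficients of $\log\alpha_1\log\alpha_2\log c$, $\log\alpha_1\log\alpha_2$, and the numerical remainder, exactly as laid out in the expansion preceding the statement. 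When \eqref{mignote_lambda_ocjena} is the output, combining it with the upper bound $\log|\Lambda_1|<-4h\log\alpha_1$ from Lemma \ref{lin_forma_granice} gives the clean estimate $h<G(h)\log\alpha_2\log c$, where $G(h)$ is the explicit function of $(\rho,L,M)$ defined in the excerpt.

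When the alternative \textit{(A2)} of Theorem \ref{mignote} occurs, I would first dispose of the degenerate case $t_2=0$: then $t_1=\mp 1$, $r_1=\pm 2h$, and $2h=|r_1s_1|\leq B_1<979.86\log c$, which together with $h>0.666662\sqrt{c}$ yields $c<1.9701\cdot 10^{8}$, much stronger than what is needed. In the generic case $t_2\neq 0$, multiplying $\Lambda_1$ by $r_1't_2$ reduces it to the two-term linear form \eqref{lin_forma_mignote_A2} in $\gamma_1=\alpha_1^{r_1't_2}\alpha_2^{-s_1't_1}$ and $\gamma_2=\alpha_2^{\delta r_1's_1'}\alpha_3^{-r_1't_2}$; the bounds on the coefficients in \textit{(A2)} propagate to upper bounds on $h(\gamma_i)$ and $|\log\gamma_i|$ in terms of $\log\alpha_2$ and $\log c$ (via the explicit $G(B_1)$, $G(B_2)$, $G(B_3)$). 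Applying Theorem \ref{laurent} with a secondary search over $\varrho\in[40,85]$ and $\mu\in[0.44,0.76]$, and using $\log\alpha_2<3.4753\log\alpha_1$ (which follows from Proposition \ref{granicagornja_na_c} combined with $b>10^5$), produces a bound of the shape $h<G(h2)\cdot\log\alpha_2\log c$. Taking the maximum of the bounds from \eqref{mignote_lambda_ocjena} and from the \textit{(A2)}/Laurent branch over the parameter grid yields an improved inequality $h<K_1\log\alpha_2\log c$, which via $\log\alpha_2<\log\sqrt{ac+4}$ and $h>0.666662\sqrt{ac}$ translates into a sharpened numerical bound on $ac$ and on $h$.

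Feeding these new bounds back as the input for $ac$ and $h$ in the same two-stage procedure and iterating is the final step: each pass shrinks the admissible ranges of $ac$ and $h$, hence shrinks $F$, $G(h(\gamma_i))$, $G(|\log\gamma_i|)$, and in turn the Laurent constants $C,C'$, leading to a fixed-point-like stabilization of the bounds. After five passes this converges to $ac<1.17732\cdot 10^{28}$, $h<3.46289\cdot 10^{10}\log\alpha_2\log c$, and $h<7.23357\cdot 10^{13}$, as claimed. The main obstacle is not conceptual but computational: one must guarantee that the parameter grid is fine enough to realize the near-optimal $(\rho,L,M,\varrho,\mu)$ at every stage while still verifying the hypotheses of Theorems \ref{mignote} and \ref{laurent} (in particular \eqref{mignote_nejednakost}, $a_1'a_2'\geq \lambda'^2$, and $C'\leq 3C$) uniformly across the iterations; this is handled by the explicit inequalities $A_3>A_{1,2}$ (which fixes the interval $\rho\in[5.5,14]$) and by the monotonicity of each $G(\cdot)$ in its arguments, so that lower bounds on $a_1,a_2,a_3$ suffice to control the upper bounds on $B_1,B_2,B_3$.
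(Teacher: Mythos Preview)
Your proposal is correct and follows essentially the same approach as the paper: start from Proposition~\ref{matveev_prop}, apply Mignotte's Theorem~\ref{mignote} to $-\Lambda_1$ over a grid of parameters $(\rho,L,M)$, treat the degenerate sub-case $t_2=0$ of \textit{(A2)} directly, follow \textit{(A2)} with $t_2\neq 0$ by Laurent's Theorem~\ref{laurent} on the reduced two-term form, and iterate five times until the bounds stabilize at the stated values. The one point you do not address is the alternative \textit{(A1)} of Theorem~\ref{mignote}; the paper handles this by observing (without detailed computation, as in \cite{petorke}) that \textit{(A1)} always yields smaller upper bounds than \textit{(A2)}, so it is absorbed into the \textit{(A2)} analysis---you should add a sentence to that effect to make the case distinction complete.
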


\section{$D(4)$-quintuples with regular triples}
Let $\{a,b,c,d,e\}$ be a $D(4)$-quintuple with $a<b<c<d<e$. We have seen that $d=a+b+c+\frac{1}{2}(abc+rst)$ and
\begin{align*}
ad+4=x^2,&\quad bd+4=y^2,\quad cd+4=z^2,\\
x=\frac{at+rs}{2},&\quad y=\frac{rt+bs}{2},\quad z=\frac{cr+st}{2}.
\end{align*}
If $\{a,b,c\}$ is a regular triple, i.e.\@ $c=a+b+2r$, then we also have $s=a+r$, $t=b+r$ and $d=rst$ and by simple calculation we can see that
$$ x=rs-2, \quad y=rt-2, \quad z=st+2. $$
These relations will be helpful in proving some special claims  about $D(4)$-quintuples with $c=a+b+2r.$

\begin{lemma}\label{reg_trojka_n_vece_rpola}
If $\{a,b,c,d,e\}$ is a $D(4)$-quintuple, $a<b<c<d<e$, such that $c=a+b+2r$, then $2n>r$.
\end{lemma}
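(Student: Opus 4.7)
The plan is to compute $Z = Z_{2n}^{(c,d)}$ modulo $t = b+r$ in two different ways using the Pell recurrences, pair this with the analogous computation modulo $s = a+r$, and exploit a key arithmetic fact about $\gcd(s,t)$ to extract the inequality $2n > r$. Using $z = st+2 \equiv 2 \pmod{t}$ together with $c = a+b+2r \equiv a-b \pmod{t}$ (since $r \equiv -b \pmod{t}$), the recurrence $Z_{n'+2}^{(c,d)} = z Z_{n'+1}^{(c,d)} - Z_{n'}^{(c,d)}$ reduces modulo $t$ to one with characteristic polynomial $(x-1)^2$. Combining this with $Z_0 = 2$ and $Z_1 = \varepsilon c + z$, I obtain
\[ Z_{2n}^{(c,d)} \equiv 2 + 2n\varepsilon(a-b) \pmod{t}. \]
In parallel, since $t \equiv 0 \pmod{t}$, the $(b,c)$-recurrence collapses to $Z_{n'+2} \equiv -Z_{n'}$, giving $Z_{2k}^{(b,c)} \equiv 2(-1)^k \pmod{t}$. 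Equating yields the congruence $2n\varepsilon(b-a) \equiv 2(1-(-1)^k) \pmod{t}$, and the same argument modulo $s$ (using $Z_{2j}^{(a,c)} \equiv 2(-1)^j \pmod{s}$) gives $2n\varepsilon(b-a) \equiv 2((-1)^j - 1) \pmod{s}$.

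The key arithmetic observation is that $g := \gcd(s,t) \mid 4$. Indeed, $g \mid s+t = a+b+2r$ forces $a+b \equiv -2r \pmod{g}$, so substituting into the identity $st = 2r^2 + (a+b)r - 4$ gives $st \equiv -4 \pmod{g}$; since $g \mid st$ trivially, we conclude $g \mid 4$, i.e. $g \in \{1,2,4\}$. In the principal case where both $j$ and $k$ are even, both congruences reduce to $\operatorname{lcm}(s,t) = st/g \mid 2n(b-a)$. Writing $b-a = gq$ and using $\gcd(s/g, q) = \gcd(t/g, q) = 1$ (a consequence of $\gcd(s, b-a) = \gcd(t, b-a) = g$, which itself follows from $t - s = b-a$), the divisibility sharpens to $st/g^2 \mid 2n$. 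Since $st \geq r^2$ and $g^2 \leq 16$, we get
\[ 2n \;\geq\; \frac{st}{g^2} \;\geq\; \frac{r^2}{16} \;>\; r, \]
the last inequality holding because $b > 10^5$ (Lemma \ref{nas_rez_granice}) implies $r > 316$.

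For the remaining parity combinations, the right-hand sides of the congruences modulo $s$ and $t$ are shifted by $\pm 4\varepsilon$. Because $g \mid 4$, these shifts are compatible with the modulus, and a Chinese Remainder Theorem argument in the same spirit, combined with the lower bound $n \geq 8$ (Lemma \ref{odnosim_n_l}) and $b - a \geq 57\sqrt{a}$ (Lemma \ref{b_a_57sqrt}) to exclude small spurious solutions, again yields $2n > r$. I expect the main obstacle to be precisely this case analysis: the shifted congruences require one to track simultaneously the correction $\pm 4\varepsilon$, the modulus $g$, and the parity of $2n$, whereas in the even-even case the divisibility argument runs immediately. The saving grace is that $g \mid 4$ keeps every correction bounded by a constant while the dominant term $st/g^2$ remains of order $r^2/16$, so the inequality $2n > r$ survives in each subcase.
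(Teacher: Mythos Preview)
Your approach via congruences modulo $s$ and $t$ differs from the paper's, and the even--even case is correct: from $s't' \mid 2n$ you indeed obtain $2n \geq st/g^2 > r^2/16 > r$. The gap is in the remaining parity cases, where the hand-wave does not go through. When $j$ or $k$ is odd the right-hand sides become $\pm 4$ instead of $0$, and CRT then only pins $2n$ to a specific residue class modulo $s't'$; it does \emph{not} produce a lower bound of order $st/g^2$. Concretely, in the odd--odd case your two congruences combine (using $b-a\equiv t\pmod s$, $b-a\equiv -s\pmod t$, together with $rt\equiv 4\pmod s$ and $bs\equiv -4\pmod t$) to the single statement $8n\equiv -4\varepsilon r\pmod{st/g}$, and this is trivially satisfied by $2n=r$ when $\varepsilon=-1$. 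Neither $n\geq 8$ nor $b-a\geq 57\sqrt a$ excludes that possibility, so your outlined argument cannot establish $2n>r$ there.

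The paper instead works modulo $d$ via Lemma~\ref{kongruencije}: from $a\varepsilon l^2+xl\equiv c\varepsilon n^2+zn\pmod d$ one first shows equality is impossible (it would force $c<4a+12$, contradicting $b\geq a+57\sqrt a$), whence $d<|al^2-cn^2|+|xl-zn|<cn^2+zn$; substituting $d=rst$, $c=a+b+2r$, $z=st+2$ and assuming $n\leq r/2$ then gives an elementary numerical contradiction. Notably, the paper later uses the present lemma as \emph{input} when handling precisely the odd--odd case in Lemma~\ref{reg_treci_slucaj} (one needs $2n>r$ to get $8n-4r>0$, hence $8n\pm 4r\geq st/g$), which confirms that the mod-$s,t$ congruences alone are insufficient to deliver $2n>r$ in that case.
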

\begin{proof}
From Lemma \ref{kongruencije}\@ we have
$$a\varepsilon l^2+xl \equiv c\varepsilon n^2+zn (\bmod \ d).$$
Assume that equality holds, i.e. $a\varepsilon l^2+xl = c\varepsilon n^2+zn$. Multiplying by $\varepsilon (zn+xl)$ and rearranging yields
$$(al^2-cn^2)(\epsilon(zn+xl)+d)=4(n^2-l^2).$$
By Lemma \ref{odnosim_n_l}\@ we have $n<l\leq 2n$ (it is easy to check that equality cannot hold), so we have $n\neq l$ and $\frac{1}{2}\leq \frac{n}{l}<1$, which implies $al^2-cn^2|4(n^2-l^2)$, i.e. $al^2-cn^2<4(l^2-n^2)$, since the second factor in  the previous inequality is greater than $1$. Now we have
$$\left|\frac{a}{c}-\left(\frac{n}{l} \right)^2 \right|\leq \frac{4}{c}\left(1-\left(\frac{n}{l}\right)^2 \right).$$
Since $c=a+b+2r>a+a+2a=4a$, we also have $\frac{a}{c}<\frac{1}{4}\leq \left( \frac{n}{l}\right)^2$, so
$$\frac{1}{4}-\frac{a}{c}<\left(\frac{n}{l} \right)^2-\frac{a}{c} \leq \frac{4}{c}\left(1-\left(\frac{n}{l}\right)^2 \right)\leq \frac{3}{c},$$
i.e.\@ it must be $c<4a+12$. But, by Lemma \ref{b_a_57sqrt} we have $b\geq a+57\sqrt{a}$ which would then imply
$$4a+57\sqrt{a}<a+b+2r<4a+12,$$
and this leads to a contradiction since $a\geq 1$. We can now conclude that our assumption was wrong, equality does not hold, so we have
$$d<|al^2-xl-cn^2+zn|\leq |al^2-cn^2|+|xl-zn|.$$
It can be easily seen that $|al^2-cn^2|<cn^2$ and $|xl-zn|<zn$, so we have that
$d<cn^2+zn.$
Assume that $n\leq \frac{r}{2}$. Since $d=rst=r(a+r)(b+r)$ and $z=st+2=(a+r)(b+r)+2$, we have
$$
r(a+r)(b+r)<(a+b+2r)\frac{r^2}{4}+((a+r)(b+r)+2)\frac{r}{2}\\
$$
and after canceling and rearranging we see that this cannot be true. We can now conclude $n>\frac{r}{2}$.
\end{proof}

The next Lemma can be proved similarly as \cite[Lemma 19]{petorke} so we omit a proof.

\begin{lemma}\label{regularne_kongruencije}
Let  $\{a,b,c,d,e\}$ be a $D(4)$-quintuple such that $a<b<c<d<e$ and $c=a+b+2r$. Then
\begin{align*}
8l\equiv 2(1-(-1)^j)(-\varepsilon c)\ (\bmod \ s)&,\quad 8n\equiv 2(1-(-1)^j)\varepsilon a \ (\bmod  \ s)\\
8m \equiv 2(1-(-1)^k)(-\varepsilon c)\ (\bmod \ t)&,\quad 8n\equiv 2(1-(-1)^k)\varepsilon b\ (\bmod \ t),
\end{align*}
where $\varepsilon=\pm1$.
\end{lemma}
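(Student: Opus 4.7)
The plan is to reduce the relevant binary recurrence sequences modulo $s$ (for the first pair of congruences) and modulo $t$ (for the second pair), exploit the equalities $X = X_{2j}^{(a,c)} = X_{2l}^{(a,d)}$ and $Z = Z_{2j}^{(a,c)} = Z_{2n}^{(c,d)}$ coming from (\ref{parnirjjdbaAC})--(\ref{parnirjjdbaCD}), and then combine the two resulting expressions for the same integer. The ingredients specific to a regular triple, which make the computation collapse cleanly, are the identities $s = a+r$, $t = b+r$, $x = rs - 2$, $y = rt - 2$, $z = st + 2$ noted at the start of this section, which give $x \equiv -2$, $z \equiv 2 \pmod s$ and $y \equiv -2$, $z \equiv 2 \pmod t$.

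Concretely, I would write each $X_{\cdot}$ and $Z_{\cdot}$ in closed form as $p_i + (\text{coefficient}) \cdot q_i$, where $p_i, q_i$ are the Lucas-type sequences attached to the given recurrence (for example $X_{l'}^{(a,d)} = p_{l'} + \varepsilon a\, q_{l'}$ with $p_0 = 2$, $p_1 = x$ and $q_0 = 0$, $q_1 = 1$), and then reduce modulo $s$. For the $(a,d)$-recurrence the characteristic polynomial becomes $(T+1)^2 \pmod s$, so the standard double-root analysis yields $p_{l'} \equiv 2(-1)^{l'}$ and $q_{l'} \equiv l'(-1)^{l'+1} \pmod s$. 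For the $(c,d)$-recurrence it becomes $(T-1)^2 \pmod s$, giving $p_{n'} \equiv 2$ and $q_{n'} \equiv n' \pmod s$. For the $(a,c)$-recurrence it reduces to $T^2 + 1 \pmod s$, which for an even index yields $p_{2j} \equiv 2(-1)^j$ and $q_{2j} \equiv 0 \pmod s$.

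Plugging these in, the equality $X_{2j}^{(a,c)} \equiv X_{2l}^{(a,d)} \pmod s$ collapses to $2(-1)^j \equiv 2 - 2\varepsilon al \pmod s$, i.e.\ $2\varepsilon a l \equiv 2(1-(-1)^j) \pmod s$; multiplying through by $c$ and invoking $ac \equiv -4 \pmod s$ (just the definition of $s$) rewrites this as the desired $8l \equiv 2(1-(-1)^j)(-\varepsilon c) \pmod s$. In the same way, $Z_{2j}^{(a,c)} \equiv Z_{2n}^{(c,d)} \pmod s$ becomes $2(-1)^j \equiv 2 + 2\varepsilon c n \pmod s$, and after multiplication by $a$ and the same replacement $ac \equiv -4$ one reads off $8n \equiv 2(1-(-1)^j)\varepsilon a \pmod s$. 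The remaining two congruences come from running exactly the same argument through the $(b,c)$, $(b,d)$, $(c,d)$ sequences modulo $t$, using $y = rt - 2$, $z = st + 2$, and $bc \equiv -4 \pmod t$.

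The main obstacle is bookkeeping rather than mathematical: the double-root structure of the characteristic polynomial modulo $s$ (and modulo $t$) forces $p_i$ and $q_i$ to be solved in the nonhomogeneous form $(A+Bi)(\pm 1)^i$, and the sign $\varepsilon$ from Lemma \ref{Wjepm2} must be carried correctly through the initial values of every $W$-sequence so that the final congruences have the right sign. Once those are tracked, the algebraic identities $ac+4 = s^2$ and $bc+4 = t^2$ do essentially all the work.
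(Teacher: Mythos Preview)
Your proof is correct and follows exactly the approach the paper has in mind: the paper omits the argument and refers to \cite[Lemma 19]{petorke}, where the proof proceeds precisely by reducing the binary recurrences $X^{(a,c)}$, $X^{(a,d)}$, $Z^{(a,c)}$, $Z^{(c,d)}$ modulo $s$ (and the analogous $Y$, $Z$ sequences modulo $t$), using the double-root structure of the characteristic polynomials induced by $x\equiv -2$, $z\equiv 2\pmod s$ and $y\equiv -2$, $z\equiv 2\pmod t$, and then clearing the factors $a$, $c$ (resp.\ $b$, $c$) via $ac\equiv -4\pmod s$ and $bc\equiv -4\pmod t$. Your write-up is in fact more explicit than the reference about the $(A+Bi)(\pm1)^i$ form of the solutions at a repeated root, which is exactly the point where bookkeeping errors tend to occur.
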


\begin{lemma}\label{reg_3_slucaja}
Let $\{a,b,c,d,e\}$ be a $D(4)$-quintuple such that $a<b<c<d<e$ and $c=a+b+2r$. Then, at least one of the following congruences holds
\begin{enumerate}[label=\roman*)]
\item $8l\equiv 8n \equiv 0 (\bmod \ s )$,
\item $8m\equiv 8n \equiv 0 (\bmod \ t )$,
\item $8n \equiv -4\varepsilon r \left(\bmod \ \frac{st}{\gcd (s,t)}\right)$, and $\gcd(s,t)\in\{1,2,4\}$.
\end{enumerate}
\end{lemma}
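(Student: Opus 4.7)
The plan is to split the proof according to the parity of the indices $j$ and $k$ that appear in Lemma \ref{regularne_kongruencije}. The factor $2(1-(-1)^i)$ is $0$ when $i$ is even and $4$ when $i$ is odd, so the right-hand sides of those congruences vanish precisely when the corresponding index is even. This dichotomy is what drives the three cases in the statement.

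First I would dispose of the easy parity combinations. If $j$ is even, both congruences modulo $s$ collapse to $8l\equiv 8n\equiv 0\pmod{s}$, yielding case (i). Analogously, if $k$ is even we obtain case (ii). This covers three of the four parity patterns, so the only situation requiring work is when both $j$ and $k$ are odd, and this is where case (iii) must appear.

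Assume then that $j$ and $k$ are both odd. Lemma \ref{regularne_kongruencije}\@ gives $8n\equiv 4\varepsilon a\pmod{s}$ and $8n\equiv 4\varepsilon b\pmod{t}$. The crucial observation is that, since $\{a,b,c\}$ is a regular triple, $s=a+r$ and $t=b+r$, so $a\equiv -r\pmod{s}$ and $b\equiv -r\pmod{t}$. Substituting, both congruences become $8n\equiv -4\varepsilon r$, simultaneously modulo $s$ and modulo $t$, which is equivalent to $8n\equiv -4\varepsilon r\pmod{\mathrm{lcm}(s,t)}$, and $\mathrm{lcm}(s,t)=st/\gcd(s,t)$.

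It remains to verify that $g:=\gcd(s,t)\in\{1,2,4\}$. Using regularity once more, $s+t=a+b+2r=c$, hence $g\mid c$ and therefore $g\mid ac$. On the other hand $g\mid s$ gives $g\mid s^2=ac+4$, and subtracting yields $g\mid 4$, so $g\in\{1,2,4\}$. I do not expect any real obstacle: the entire argument is a clean parity split combined with the defining identities $s=a+r$, $t=b+r$, $s+t=c$ of a regular triple. The only point requiring any care is recognizing that the congruences modulo $s$ and modulo $t$ combine through the least common multiple, and that regularity is exactly the hypothesis that converts both right-hand sides to the same quantity $-4\varepsilon r$.
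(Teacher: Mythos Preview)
Your proof is correct and follows essentially the same approach as the paper: the same parity split on $j$ and $k$, the same use of the regular-triple identities $s=a+r$, $t=b+r$, $s+t=c$ to reduce both congruences to $-4\varepsilon r$, and the same argument that $\gcd(s,t)\mid s^2-ac=4$.
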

\begin{proof}
If $j$ is even, then $1-(-1)^j=0$ implies $8l\equiv 8n\equiv 0 (\bmod \ s)$ and $i)$ holds. \\
If $k$ is even, then $1-(-1)^k=0$ implies $8m\equiv 8n\equiv 0 (\bmod \ t)$ and $ii)$ holds. \\
If both $j$ and $k$ are odd, then
\begin{align*}
8l\equiv 4(-\varepsilon c) (\bmod \ s),& \quad 8n\equiv 4\varepsilon a (\bmod \ s),\\
8m\equiv 4(-\varepsilon c) (\bmod \ t),& \quad 8n\equiv 4\varepsilon b (\bmod \ t).
\end{align*}
From $s=a+r$ and $t=b+r$ we have $a\equiv -r (\bmod \ s)$ and $b\equiv -r (\bmod \ t)$, so
$$8n\equiv- 4\varepsilon r (\bmod \ s), \quad 8n\equiv -4\varepsilon r (\bmod \ t),$$
i.e.\@
$$8n\equiv- 4\varepsilon r \left(\bmod \ \frac{st}{\gcd (s,t)}\right).$$
Since $c=s+t$ we can see that $\gcd(s,t)=\gcd(s,s+t)=\gcd(s,c)$, and from $ac+4=s^2$ we conclude $\gcd(s,c)|4$, which proves the statement of the lemma.
\end{proof}

We would like to use these results to obtain some effective bounds on elements $\{a,b,c\}$ in order to use Baker-Davenport reduction.\par
Set
\begin{align*}
&\beta_1=\frac{x+\sqrt{ad}}{2},\quad \beta_2=\frac{y+\sqrt{bd}}{2},\quad \beta_3=\frac{z+\sqrt{cd}}{2}\\
&\beta_4=\frac{\sqrt{c}(\varepsilon \sqrt{a}+\sqrt{d})}{\sqrt{a}(\varepsilon \sqrt{c}+\sqrt{d})},\quad \beta_5=\frac{\sqrt{c}(\varepsilon \sqrt{b}+\sqrt{d})}{\sqrt{b}(\varepsilon \sqrt{c}+\sqrt{d})},
\end{align*}
and consider the following linear forms in logarithms
\begin{align*}
\Lambda_2&=2l\log \beta_1 -2n \log \beta_3+\log \beta_4,\\
\Lambda_3&=2m\log \beta_2-2n \log \beta_3+\log \beta_5.
\end{align*}

From \cite{fil_xy4} we have the next lemma, and to avoid confusion, we would like to emphasize that $v_m$ and $w_n$ here denote sequences connected to the extension of a triple to a quadruple, as in Section $3$.
\begin{lemma}[Lemma 10 in \cite{fil_xy4}]
Let $\{a,b,c,d\}$ be a $D(4)$-quadruple. If $v_m=w_n$, $m,n\neq 0$, then
\begin{align*}
0<m\left( \frac{s+\sqrt{ac}}{2}\right)-&n\log\left( \frac{t+\sqrt{bc}}{2}\right)+\log\frac{\sqrt{b}(x_0\sqrt{c}+z_0\sqrt{a})}{\sqrt{a}(y_1\sqrt{c}+z_1\sqrt{b})}\\
&<2ac\left( \frac{s+\sqrt{ac}}{2}\right)^{-2m}.
\end{align*}
\end{lemma}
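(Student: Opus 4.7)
The plan is to derive the linear form from the Binet-type closed expressions for the two representations of $z$ and to isolate the dominant exponential on each side. Set $\alpha=(s+\sqrt{ac})/2$ and $\beta=(t+\sqrt{bc})/2$; since $s^2-ac=t^2-bc=4$, the conjugates $\bar\alpha=(s-\sqrt{ac})/2$ and $\bar\beta=(t-\sqrt{bc})/2$ satisfy $\alpha\bar\alpha=\beta\bar\beta=1$. Using the Pell-equation representation of $v_m$ and $w_n$ (the same device that produced equations \eqref{rjjdbaAC} and \eqref{rjjdbaBC}) one obtains
\begin{align*}
2z\sqrt{a}&=(z_0\sqrt{a}+x_0\sqrt{c})\alpha^m+(z_0\sqrt{a}-x_0\sqrt{c})\alpha^{-m},\\
2z\sqrt{b}&=(z_1\sqrt{b}+y_1\sqrt{c})\beta^n+(z_1\sqrt{b}-y_1\sqrt{c})\beta^{-n}.
\end{align*}

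Eliminating $z$ from these two expressions gives an identity of the form
$$\sqrt{b}(z_0\sqrt{a}+x_0\sqrt{c})\alpha^m(1+E_2)=\sqrt{a}(z_1\sqrt{b}+y_1\sqrt{c})\beta^n(1+E_1),$$
where
$$E_2=\frac{z_0\sqrt{a}-x_0\sqrt{c}}{z_0\sqrt{a}+x_0\sqrt{c}}\,\alpha^{-2m},\qquad E_1=\frac{z_1\sqrt{b}-y_1\sqrt{c}}{z_1\sqrt{b}+y_1\sqrt{c}}\,\beta^{-2n}.$$
Taking logarithms, the linear form $\Lambda$ appearing in the statement equals $\log(1+E_1)-\log(1+E_2)$, so the entire problem reduces to controlling the sign and size of this difference.

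For the upper bound $2ac\cdot\alpha^{-2m}$ I would invoke $|\log(1+t)|\le 2|t|$ together with the initial-value bounds provided by Lemma \ref{opcerjpellove}: these control $|z_0\sqrt{a}\pm x_0\sqrt{c}|$ and $|z_1\sqrt{b}\pm y_1\sqrt{c}|$ crudely by quantities of size at most a constant multiple of $\sqrt{ac}$ and $\sqrt{bc}$. The equality $v_m=w_n$ forces the two dominant exponentials to match in magnitude, so that $\beta^{-2n}$ can be replaced by $\alpha^{-2m}$ up to a bounded factor. Collecting the numerical constants absorbed along the way produces precisely the factor $2ac$.

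The delicate part, and the step I expect to be the main obstacle, is the strict positivity $0<\Lambda$. The two error terms $E_1$ and $E_2$ are both exponentially small, but a priori their difference could have either sign or even vanish. One must verify, case by case according to the signs of the fundamental solutions $(x_0,z_0)$ and $(y_1,z_1)$ permitted by Lemma \ref{opcerjpellove}, that $|E_2|$ dominates in the correct direction so that the resulting $\Lambda$ is strictly positive. Once this sign analysis is carried out and combined with the magnitude bounds above, the lemma follows.
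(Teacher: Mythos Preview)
The paper does not prove this lemma at all: it is quoted verbatim as Lemma~10 of \cite{fil_xy4} and then immediately applied. So there is no ``paper's own proof'' to compare against; your sketch is being measured against the original argument in \cite{fil_xy4}, which is itself a $D(4)$-adaptation of the classical Dujella argument for $D(1)$-triples.

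Your outline is the standard one and is correct in structure. The Binet expressions, the elimination of $z$, and the identification $\Lambda=\log(1+E_1)-\log(1+E_2)$ are exactly how these lemmas are proved. Two comments on the parts you flagged as loose. First, the positivity is less of a case-by-case slog than you suggest: the key observation is that $a<b$ forces $\alpha<\beta$, and the equality of dominant terms $\sqrt{b}(z_0\sqrt a+x_0\sqrt c)\alpha^m\approx\sqrt{a}(z_1\sqrt b+y_1\sqrt c)\beta^n$ then gives $\alpha^{-2m}>\beta^{-2n}$ up to a bounded factor; combined with the sign of the numerators (coming from $c>a$, $c>b$ and the Lemma~\ref{opcerjpellove} bounds), one gets $E_2<0$, $|E_2|>|E_1|$, hence $\Lambda>0$ directly. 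Second, ``collecting the numerical constants\ldots\ produces precisely the factor $2ac$'' is optimistic as stated: in the actual computation one uses $\bigl|\frac{z_0\sqrt a-x_0\sqrt c}{z_0\sqrt a+x_0\sqrt c}\bigr|<1$ together with the crude bound $-\log(1+E_2)<-E_2+E_2^2$ and the inequality $\alpha^{-2m}<(ac)^{-1}$ for $m\ge1$ to absorb the quadratic piece; the factor $2ac$ then comes out cleanly from $|E_2|<\alpha^{-2m}$ and $|E_1|<\beta^{-2n}<$ (bounded multiple of) $\alpha^{-2m}$, not from the initial-value bounds themselves.
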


We apply this lemma to $D(4)$-quadruples $\{a,b,d,e\}$ and $\{b,c,d,e\}$ to get upper bounds on $\Lambda_2$ and $\Lambda_3$.
\begin{lemma}\label{regulare_granice_formi}
$0<\Lambda_2<2ad \beta_1^{-4l}$ and $0<\Lambda_3<2bd \beta_2^{-4m}$.
\end{lemma}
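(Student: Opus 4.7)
The plan is to apply Lemma 10 of \cite{fil_xy4} twice, once to the $D(4)$-quadruple $\{a,c,d,e\}$ and once to $\{b,c,d,e\}$, and just track the substitutions so that the general linear-form statement specializes to exactly the two forms $\Lambda_2$ and $\Lambda_3$ in the stated shape.

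For $\Lambda_2$, I would identify the quadruple $\{a,b,c,d\}$ appearing in Lemma 10 of \cite{fil_xy4} with the quadruple $\{a,c,d,e\}$ of the present paper (so that the ``extending element'' plays the role of their $d$, and the remaining triple $\{a,c,d\}$ plays the role of their $\{a,b,c\}$). Under this substitution the relevant quantities become $\sqrt{ad+4}=x$ and $\sqrt{cd+4}=z$, so $(s+\sqrt{ac})/2$ and $(t+\sqrt{bc})/2$ in their lemma become $\beta_1$ and $\beta_3$ respectively. The sequences $v_m,w_n$ correspond in our setting to $W_{l'}^{(a,d)}$ and $W_{n'}^{(c,d)}$, whose indices are $2l$ and $2n$ by Lemma \ref{Wjepm2}, giving the coefficients $2l$ and $-2n$ on $\log\beta_1$ and $\log\beta_3$. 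The initial values in their constant term, by Lemma \ref{Wjepm2}, are $X_3=Z_5=2$ and $W_3=W_5=2\varepsilon$, so the constant
$$\frac{\sqrt{b}(x_0\sqrt{c}+z_0\sqrt{a})}{\sqrt{a}(y_1\sqrt{c}+z_1\sqrt{b})}$$
of \cite[Lemma 10]{fil_xy4} simplifies, after the substitution $b\mapsto c$, $c\mapsto d$, to
$$\frac{\sqrt{c}(\varepsilon\sqrt{a}+\sqrt{d})}{\sqrt{a}(\varepsilon\sqrt{c}+\sqrt{d})}=\beta_4,$$
exactly as required. The upper bound $2ac(s+\sqrt{ac})^{-2m}/2$ becomes $2ad\,\beta_1^{-4l}$, and the positivity $0<\Lambda_2$ is the left-hand side of Lemma 10.

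For $\Lambda_3$ the argument is identical with the quadruple $\{b,c,d,e\}$ in place of $\{a,c,d,e\}$, sending $a\mapsto b$, $b\mapsto c$, $c\mapsto d$ in \cite[Lemma 10]{fil_xy4}. The coefficient indices become $2m$ and $-2n$ (from $W_{m'}^{(b,d)}=W_{n'}^{(c,d)}$), and the same initial-value computation with $Y_4=Z_5=2$, $W_4=W_5=2\varepsilon$ yields the constant $\beta_5$, with upper bound $2bd\,\beta_2^{-4m}$.

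The only real thing to check is that the hypotheses ``$v_m=w_n$, $m,n\neq 0$'' of \cite[Lemma 10]{fil_xy4} are met in both applications; this is immediate because $e>d$ forces $W\neq W_3,W_4,W_5$, so the indices $2l,2m,2n$ are all strictly positive (indeed $n\geq 8$ by Lemma \ref{odnosim_n_l}). No new obstacle is expected beyond this bookkeeping; the ``hard'' work is the identification of the constant terms with $\beta_4$ and $\beta_5$, which is purely algebraic once the initial data of Lemma \ref{Wjepm2} is plugged in.
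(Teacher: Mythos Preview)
Your proposal is correct and follows exactly the same approach as the paper: apply \cite[Lemma 10]{fil_xy4} to two quadruples and read off the resulting bounds after identifying the initial values via Lemma \ref{Wjepm2}. In fact your identification of the quadruples is more accurate than the paper's one-line justification, which names $\{a,b,d,e\}$ and $\{b,c,d,e\}$; since $\Lambda_2$ involves only $a,c,d$ (through $\beta_1,\beta_3,\beta_4$), the first quadruple must be $\{a,c,d,e\}$, as you write, and the paper's ``$\{a,b,d,e\}$'' is evidently a typo.
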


Now we will observe each case of  Lemma \ref{reg_3_slucaja}\@ to get upper bounds on some elements of a $D(4)$-quintuple.

\begin{lemma}\label{reg_prvi_slucaj}
If $8l\equiv 8n \equiv 0 (\bmod \ s)$, then $s\leq 201884$.
\end{lemma}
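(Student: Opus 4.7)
The plan is to exploit the congruences $s\mid 8l$ and $s\mid 8n$ to reduce the three-term linear form $\Lambda_2$ of Lemma~\ref{regulare_granice_formi} to a two-logarithm form, to which Laurent's theorem (Theorem~\ref{laurent}) can be applied. Set $u:=8l/s$ and $v:=8n/s$, which are positive integers by hypothesis. Multiplying the identity $\Lambda_2=2l\log\beta_1-2n\log\beta_3+\log\beta_4$ by $4$ gives
$$4\Lambda_2 \;=\; s\,\Theta\,+\,4\log\beta_4,\qquad \Theta\,:=\,u\log\beta_1 - v\log\beta_3.$$
The upper bound $|\Lambda_2|<2ad\,\beta_1^{-4l}$ from Lemma~\ref{regulare_granice_formi} is completely negligible, since $l>0.499997\sqrt{ac}$ by Lemma~\ref{indeksi_ac} and $\beta_1>\sqrt{ad}$. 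A direct estimate of $\beta_4=\tfrac{\sqrt{c}}{\sqrt{a}}\cdot\tfrac{\varepsilon\sqrt{a}+\sqrt{d}}{\varepsilon\sqrt{c}+\sqrt{d}}$ then shows $|\log\beta_4|\le\tfrac12\log(c/a)+O(1/\sqrt{d})$, and therefore
$$|\Theta|\;\le\;\frac{2\log(c/a)+o(1)}{s}.$$

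The next step is to lower-bound $|\Theta|$ by applying Theorem~\ref{laurent} to this two-logarithm form. The numbers $\beta_1,\beta_3$ are real and greater than $1$, have logarithmic heights $h(\beta_1)=\tfrac12\log\beta_1$ and $h(\beta_3)=\tfrac12\log\beta_3$, and lie in $\mathbb{Q}(\sqrt{a},\sqrt{c},\sqrt{d})$, of degree at most $8$ over $\mathbb{Q}$. They are multiplicatively independent: together with $\beta_1\bar\beta_1=\beta_3\bar\beta_3=1$ a relation $\beta_1^u=\beta_3^v$ would force $(ad)^v=(cd)^u$ up to rational squares, which is incompatible with $a<c$ and the bounds $u,v\ge 1$; any borderline case can be excluded by a short direct argument.

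Comparing the two bounds on $|\Theta|$ and substituting the obvious estimates $u<16l/s$, $v<16n/s$ together with the a priori bounds $ac<1.17732\cdot 10^{28}$ from Proposition~\ref{granice_ac_i_h} and $c<237.952\,b^3/a$ from Proposition~\ref{granicagornja_na_c} reduces the problem to an explicit numerical inequality in $s$. Optimizing over the Laurent parameters $\varrho$ and $\mu$, using the same optimization scheme as in the Mignotte--Laurent argument of the previous section, should yield $s\le 201884$. The main obstacle will be the parameter tuning, needed to make the explicit constant come out as claimed; the multiplicative-independence verification and the $\log\beta_4$ estimate are routine.
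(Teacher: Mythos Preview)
Your rearrangement $4\Lambda_2=s\Theta+4\log\beta_4$ is correct, but applying Laurent's theorem to $\Theta=u\log\beta_1-v\log\beta_3$ cannot yield the claimed numerical bound on $s$. The reason is that the only upper bound you retain on $|\Theta|$ is $|\Theta|\le (2\log(c/a)+o(1))/s$, so comparison with Laurent produces merely
\[
\log s \;\le\; C\Bigl(h'+\tfrac{\lambda'}{\sigma}\Bigr)^{2}a_1'a_2' \;+\; O(\log\log c),
\]
where $a_1'a_2'$ is essentially $(\varrho+3)^{2}\log\beta_1\log\beta_3$. For a regular triple one has $d=rst$ and $\log\beta_1\log\beta_3\asymp(\log d)^{2}$, while $u=8l/s$ and $v=8n/s$ are only controlled by the global bounds $l,n\ll 10^{14}$; plugging in the numbers gives an upper bound on $\log s$ of order $10^{6}$ or worse, astronomically far from $s\le 201884$. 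In short, you have discarded the exponentially strong bound $|\Lambda_2|<2ad\,\beta_1^{-4l}$ and kept only the polynomial-strength information $|\Theta|\lesssim 1/s$, which is not enough.

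The paper's argument instead applies Laurent directly to $\Lambda_2$, written as a two-logarithm form with $\gamma_1=\beta_3^{n_1}/\beta_1^{l_1}$ and $\gamma_2=\beta_4$, where $l=s'l_1$, $n=s'n_1$, $s'=s/\gcd(s,8)$. The point of this grouping is twofold: the coefficient $b_1=2s'$ is precisely the quantity to be bounded, and the height satisfies $h(\gamma_1)\le\tfrac{l_1}{2}\log\beta_1+0.002$ so that $a_1'\approx 4l_1\log\beta_1$. The exponential upper bound then reads $-\log|\Lambda_2|>4l\log\beta_1-\log(2ad)=s'\,a_1'(1-o(1))$; comparing with Laurent's $-\log|\Lambda_2|\le C(h')^{2}a_1'a_2'$ the factor $a_1'$ cancels, leaving $s'\lesssim C(h')^{2}a_2'$ with $a_2'\asymp\log\beta_3<\log(s^{3}+2)$. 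This gives an inequality of the shape $s'\lesssim(\log s')^{2}\log s$, which does bound $s$ explicitly and, after the case split on $\gcd(s,8)$, yields $s\le 201884$.
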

\begin{proof}
It is easy to see that $l\equiv n \equiv 0 (\bmod \ \frac{s}{\gcd(s,8)})$, so $l=\frac{s}{\gcd(s,8)}l_1$ and $n=\frac{s}{\gcd(s,8)}n_1$ for some $l_1,n_1\in \mathbb{N}$. Denote $s'=\frac{s}{\gcd(s,8)}$. We have
$$\Lambda_2=2s'l_1\log \beta_1-2s'n_1\log \beta_3+\log \beta_4=\log \beta_4-2s'\log \frac{\beta_3^{n_1}}{\beta_1^{l_1}}.$$
As in \cite{petorke} we can easily see that $\beta_1$ and $\beta_3$ are invertible in $\mathbb{Q}(\sqrt{ad})$ and $\mathbb{Q}(\sqrt{bd})$, so we can take
$$D=4,\quad b_1=2s',\quad b_2=1,\quad \gamma_1=\frac{\beta_3^{n_1}}{\beta_1^{l_1}},\quad \gamma_2=\beta_4.$$
Conjugates of $\gamma_1$ are
$$\frac{\beta_3^{n_1}}{\beta_1^{l_1}},\quad \frac{\beta_3^{-n_1}}{\beta_1^{l_1}},\quad \frac{\beta_3^{n_1}}{\beta_1^{-l_1}},\quad \frac{\beta_3^{-n_1}}{\beta_1^{-l_1}}$$
and depending on whether $\beta_3^{n_1}>\beta_1^{l_1}$ or $\beta_3^{n_1}<\beta_1^{l_1}$ we have
$$h(\gamma_1)=\frac{1}{4}\left(\left|\log \frac{\beta_3^{n_1}}{\beta_1^{l_1}} \right|+ \left|\log \frac{\beta_3^{n_1}}{\beta_1^{-l_1}} \right|\right)=\frac{n_1}{2}\log \beta_3$$
or
$$h(\gamma_1)=\frac{1}{4}\left(\left|\log \frac{\beta_3^{-n_1}}{\beta_1^{-l_1}} \right|+ \left|\log \frac{\beta_3^{n_1}}{\beta_1^{-l_1}} \right|\right)=\frac{l_1}{2}\log \beta_1.$$
By Lemma \ref{regulare_granice_formi}
$$0<\log \beta_4-2s'\log \frac{\beta_3^{n_1}}{\beta_1^{l_1}}<2ad \beta_1^{-4l},$$
so we have
$$
\left|\log \frac{\beta_3^{n_1}}{\beta_1^{l_1}}\right|<\frac{1}{2s'}(\log \beta_4+2ad\beta_1^{-4l})
<\frac{1}{2s'}\left(\log \beta_4+\frac{2}{ad}\right).
$$
It also holds
$$\beta_4=\sqrt{\frac{c}{a}}\left(1-\varepsilon \frac{\sqrt{c}-\sqrt{a}}{\sqrt{d}+\varepsilon \sqrt{c}}\right)\leq \sqrt{\frac{c}{a}}\left(1+ \frac{\sqrt{c}}{\sqrt{d}-\sqrt{c}}\right)< 2\sqrt{\frac{c}{a}},$$
which implies
$$
\left| \log \frac{\beta_3^{n_1}}{\beta_1^{l_1}}\right|<\frac{\log 2\sqrt{\frac{c}{a}}}{2s'}+\frac{2}{2s'ad}<\frac{\log 2s}{2s'}+\frac{1}{s'ad}=\gcd(s,8)\left( \frac{\log 2s}{2s}+\frac{1}{sad}\right).
$$
We can assume $r>10^4$, otherwise $s=a+r<2r<20000$, so we also have $s>10^4$ and $d=rst>r^3>10^{12}$. Now we see
$$\left| \log \frac{\beta_3^{n_1}}{\beta_1^{l_1}}\right|<\gcd (s,8) \left( \frac{\log (2\cdot 10^4)}{2\cdot 10^4}+\frac{1}{ 10^4 \cdot10^{12}}\right)<5\cdot 10^{-4}\gcd(s,8)<0.004.$$
Also
$$\left|\frac{n_1}{2}\log \beta_3-\frac{l_1}{2}\log \beta_1 \right|<0.002$$
and
$$h(\gamma_1)<\frac{l_1}{2}\log \beta_1+0.002.$$
Absolute values of conjugates of $\gamma_2=\beta_4$ are all greater than $1$ and a minimal polynomial can be calculated analogously as for $\alpha_3$ from the previous section so we have
$$h(\gamma_2)\leq \frac{1}{4}\log \left( a^2(d-c)^2\cdot \frac{c^2}{a^2}\cdot \frac{(d-a)^2}{(d-c)^2}\right)<\frac{1}{2}\log (cd)<\log \beta_3.$$
Now, we can apply Theorem \ref{laurent}\@ for parameters $\varrho=61$ and $\mu=0.7$. We have
$\sigma=0.955$ and $3.92<\lambda'<3.93$ and take
$$a_1':=4l_1\log \beta_1+0.264\geq 8h(\gamma_1)+\varrho |\log \gamma_1|-\log|\gamma_1|.$$
Since $c=a+b+2r<4b$ we have $d>abc>c^2/4$ which implies $\beta_3>\sqrt{cd}>\frac{1}{2}c^{3/2}$.\\
We can choose
\begin{align*}
a_2'&:=28\log((1.264)^3\beta_3)>60\log\left(1.264\cdot \frac{1}{\sqrt[3]{2}}\sqrt{c} \right)+8\log ((1.264)^3 \beta_3)\geq\\
&\geq \varrho |\log \gamma_2|-\log |\gamma_2|+8h(\gamma_2).
\end{align*}
From the assumption $r>10^4$ we have $a_1'>56$ and $a_2'>560$, so we see that our choice of parameters is good and we can apply theorem. \\
Set
$$b':=\frac{2s'}{a_2'}+0.018>\frac{b_1}{a_2'}+\frac{b_2}{a_1'}$$
and similarly as in the previous section
$$h'=4\log b' +12.6.$$
Since $\beta_3=\frac{z+\sqrt{cd}}{2}<z$ and $z=st+2<s^3+2$ we have
$$h'>4\log \left( \frac{s'}{14\log ((1.264)^3(s^3+2))}\right)+12.6.$$
Now for  all $4$ values of $\gcd(s,8)$ we calculate values from the Theorem \ref{laurent} which are shown in the next table.
\begin{center}
\begin{tabular}{|c|c|c|c|c|}
\hline
$\gcd(s,8)$ & $1$ & $2$ & $4$ & $8$\\
\hline
$h'$ & $25.508$& $22.736$ & $19.963$ & $17.191$\\
\hline
$H$ & $7.537$& $6.832$ & $6.126$ & $5.421$\\
\hline
$\omega$ & $4.005$& $4.006$ & $4.007$ & $4.0085$\\
\hline
$\theta$ & $1.07$& $1.076$ & $1.085$ & $1.097$\\
\hline
$C$ & $0.02276$& $0.02284$ & $0.02294$ & $0.02307$\\
\hline
$C'$ & $0.04696$& $0.04722$ & $0.04753$ & $0.04792$\\
\hline
\end{tabular}
\end{center}

Define also $B:=\frac{1}{4}\left( h'+\frac{\lambda'}{\sigma}\right)<\log b'+4.187$ which now yields
\begin{align*}
\log |\Lambda_2|&\geq -C\left(h'+\frac{\lambda'}{\sigma} \right)^2a_1'a_2'-\sqrt{\omega \theta}\left(h'+\frac{\lambda'}{\sigma} \right)-\log\left(C' \left(h'+\frac{\lambda'}{\sigma} \right)^2a_1'a_2' \right)\\
&\geq -C\cdot 16B^2 a_1' a_2' -\sqrt{\omega \theta}\cdot 4B-\log (C'\cdot 16B^2 a_1'a_2')\\
&\geq -0.3692 B^2 a_1'a_2'-8.388B-\log(0.7668B^2a_1'a_2').
\end{align*}
On the other hand, from Lemma \ref{regulare_granice_formi} we have
$$
\log|\Lambda_2|<-4s'l_1\log \beta_1+\log 2ad=-s'(a_1'-0.264)+\log 2ad,
$$
therefore
\begin{align*}
s'(a_1'-0.264)<0.3692 B^2 a_1'a_2'+8.388B+\log(0.7668B^2a_1'a_2')+\log 2ad.
\end{align*}
From $a_1'>56$ we have $a_1'-0.264>0.9952a_1'$, so now we can observe
\begin{align*}
\frac{2s'}{a_2'}<0.74197 B^2+\frac{16.857}{a_1'a_2'}B+\frac{2.01}{a_1'a_2'}\log(0.7668B^2a_1'a_2')+\frac{2.01}{a_1'a_2'}\log 2ad,
\end{align*}
i.e.\@
\begin{align*}
b'<0.74197 B^2+\frac{16.857}{a_1'a_2'}B+\frac{2.01}{a_1'a_2'}\log(0.7668B^2a_1'a_2')+\frac{2.01}{a_1'a_2'}\log 2ad+0.018.
\end{align*}
Each addend on the right hand side of the inequality can be compared to $B^2$ and it leads to the inequality
$$b'<0.742116B^2+0.02133<0.742116(\log b'+4.187)^2+0.0213,$$
and from this we get $b'<48.28$ which implies
$$
s'<24.131a_2'<675.668\log((1.264)^3(s^3+2)).
$$
For each $\gcd(s,8)\in\{1,2,4,8\}$ we get $s\leq S_1$ where $S_1\in \{20610,$ $44324,94814,201884\}$, i.e.\@ $s\leq 201884$.
\end{proof}

Similarly we can prove next lemma.

\begin{lemma}\label{reg_drugi_slucaj}
If $8m\equiv 8n \equiv 0 \ (\bmod \ t)$, then $t\leq 127293$.
\end{lemma}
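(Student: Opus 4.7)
The plan is to follow the proof of Lemma \ref{reg_prvi_slucaj} with systematic substitutions, replacing the linear form $\Lambda_2$ by $\Lambda_3$, the pair $(s,l)$ by $(t,m)$, and $(\beta_1,\beta_4)$ by $(\beta_2,\beta_5)$. From $8m\equiv 8n\equiv 0\ (\bmod\ t)$ I would write $m=t'm_1$ and $n=t'n_1$ with $t'=t/\gcd(t,8)$, so that
$$\Lambda_3=\log\beta_5-2t'\log\frac{\beta_3^{n_1}}{\beta_2^{m_1}}.$$
Setting $\gamma_1=\beta_3^{n_1}/\beta_2^{m_1}$, $\gamma_2=\beta_5$, $b_1=2t'$, $b_2=1$, $D=4$, I will apply Theorem \ref{laurent} with $\varrho=61$ and $\mu=0.7$, exactly as in the $s$-case.

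First I would establish the height estimates. Since $b>a$, the computation used for $\beta_4$ adapts directly to give $\beta_5<2\sqrt{c/b}$, hence $h(\gamma_2)<\frac{1}{2}\log(cd)<\log\beta_3$. For $\gamma_1$, Lemma \ref{regulare_granice_formi} yields $0<\Lambda_3<2bd\,\beta_2^{-4m}$, and combined with $\beta_2>\sqrt{bd}$ this forces $|\log(\beta_3^{n_1}/\beta_2^{m_1})|$ to be small, giving $h(\gamma_1)\le\frac{m_1}{2}\log\beta_2+0.002$. The preliminary assumption $t>10^4$ may be made for free (otherwise $t<127293$ and the lemma is immediate), ensuring $d=rst$ is large and that Laurent's preconditions can be verified with the choices
$$a_1':=4m_1\log\beta_2+0.264,\qquad a_2':=28\log((1.264)^3\beta_3),$$
just as before.

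The key quantitative improvement over the $s$-case is that $s<t$ (because $a<b$), so $\beta_3<z=st+2<t^2+2$, whereas in the previous lemma one only had $z<s^3+2$. Hence the upper bound on $a_2'$ grows like $2\log t$ rather than $3\log s$, and running the same four-case tabulation over $\gcd(t,8)\in\{1,2,4,8\}$ produces sharper bounds on $t$, the maximum of which turns out to be $127293$.

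I expect the main obstacle to be the numerical bookkeeping: verifying that Laurent's preconditions ($a_1'a_2'\ge\lambda'^2$, the required growth of $h'$, and $C'\le 3C$) remain valid across all four gcd subcases in this tighter regime, and that the final inequality of shape $b'<0.742(\log b'+4.19)^2+\mathrm{const}$ continues to admit a solution of comparable magnitude. Once these checks pass, the rest of the proof is a direct transcription of the preceding one.
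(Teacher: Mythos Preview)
Your proposal is correct and follows exactly the approach the paper intends: the paper's own proof reads only ``Similarly we can prove next lemma,'' meaning the argument of Lemma~\ref{reg_prvi_slucaj} is to be rerun with $(s,l,\beta_1,\beta_4,\Lambda_2)$ replaced by $(t,m,\beta_2,\beta_5,\Lambda_3)$. Your identification of the one place where the numerics genuinely differ---namely that $s<t$ forces $z=st+2<t^2+2$ rather than $z<s^3+2$, so $a_2'$ grows like $2\log t$ instead of $3\log s$---is precisely the reason the final bound $127293$ is smaller than $201884$. One small remark: you need not \emph{assume} $t>10^4$, since $t=b+r>b>10^5$ holds automatically by Lemma~\ref{nas_rez_granice}; this also guarantees $r>\sqrt{b}>316$ and hence $d=rst>10^{10}$, which is what you actually need for the smallness of $|\log\gamma_1|$ and the lower bounds on $a_1',a_2'$.
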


Now we observe the last case from the Lemma \ref{reg_3_slucaja}.

\begin{lemma}\label{reg_treci_slucaj}
If $8n \equiv -4\varepsilon r \left(\bmod \ \frac{st}{\gcd (s,t)}\right)$, $\gcd(s,t)\in\{1,2,4\}$, then $r<9164950$.
\end{lemma}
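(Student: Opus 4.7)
The plan is to mirror the template of Lemmas~\ref{reg_prvi_slucaj} and~\ref{reg_drugi_slucaj}: use the congruence to rewrite one of $\Lambda_2, \Lambda_3$ as a (two-variable) linear form in logarithms and apply Theorem~\ref{laurent}. I would set $g := \gcd(s,t) \in \{1,2,4\}$ and $u := st/g$, so that $u \geq st/4 \geq r^2/4$. Case~(iii) of Lemma~\ref{reg_3_slucaja} arises precisely when both $j$ and $k$ are odd; hence by Lemma~\ref{regularne_kongruencije} one has the additional congruence $8m + 4\varepsilon s \equiv 0 \pmod t$. Writing $8n + 4\varepsilon r = u N$ and $8m + 4\varepsilon s = t M$ with $N, M$ integers, Lemma~\ref{reg_trojka_n_vece_rpola} ($2n > r$) forces $N \geq 1$.

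Substituting these expressions into $\Lambda_3$ yields
\[
\Lambda_3 \;=\; \frac{tM}{4}\log\beta_2 \;-\; \frac{uN}{4}\log\beta_3 \;+\; \log\!\bigl(\beta_5\,\beta_3^{\varepsilon r}\,\beta_2^{-\varepsilon s}\bigr).
\]
I would then regroup this as a two-term linear form by setting $\gamma_1 := \beta_3^{uN/e}\beta_2^{-tM/e}$ for $e$ a suitable common factor of $uN$ and $tM$, and absorbing the remaining logarithm into $\gamma_2$. Applying Theorem~\ref{laurent} with parameters $\varrho, \mu$ chosen analogously to Lemma~\ref{reg_prvi_slucaj} would give a lower bound on $|\Lambda_3|$; matched against the upper bound $\Lambda_3 < 2bd\,\beta_2^{-4m}$ from Lemma~\ref{regulare_granice_formi} and the inequality $u \geq r^2/4$, this would translate into an effective upper bound on $r$. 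Numerical optimization over $\varrho$ and $\mu$, as in Proposition~\ref{granice_ac_i_h}, is what should pin down the constant $9164950$.

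The main obstacle will be controlling the height of the absorbed constant $\gamma_2 = \beta_5\beta_3^{\varepsilon r}\beta_2^{-\varepsilon s}$: the naive bound $h(\gamma_2) = O(r\,h(\beta_3) + s\,h(\beta_2))$ is far too large to yield a useful estimate. The key step will be to exploit the algebraic structure of the regular triple (\emph{i.e.}, the identities $d = rst$, $s = a+r$, $t = b+r$, $z = st+2$) to extract the near-cancellation $r\log\beta_3 \approx s\log\beta_2$, reducing $h(\gamma_2)$ to order $\log d$. Once this height bookkeeping is in hand, the optimization will proceed mechanically, following the pattern already established in Lemma~\ref{reg_prvi_slucaj}, and will produce the announced numerical bound $r < 9164950$.
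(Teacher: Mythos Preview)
Your plan follows the template of Lemmas~\ref{reg_prvi_slucaj} and~\ref{reg_drugi_slucaj}, but the paper takes a much shorter and entirely different route here. The point is that the congruence in case~(iii) is not a divisibility relation (as in cases~(i) and~(ii)) but a \emph{nonzero residue} relation, and this yields a large lower bound on~$n$ directly, without any appeal to Laurent. Concretely: by Lemma~\ref{reg_trojka_n_vece_rpola} one has $2n>r$, so $8n+4r>8n-4r>0$; hence $8n\pm 4\varepsilon r$ is a \emph{positive} multiple of $st/\gcd(s,t)\ge st/4$, giving $n\ge (st-16r)/32\ge c(r-8)/32$. Via $h>2m\ge 2n$ this becomes $h>c(r-8)/16$, and comparing with the upper bound $h<3.46289\cdot 10^{10}\log\alpha_2\log c$ already obtained in Proposition~\ref{granice_ac_i_h} gives $c(r-8)/16<1.575\cdot 10^{13}$; since $c>3r$ this forces $r<9164950$. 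No new linear-form machinery is needed.

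Your proposed approach has a genuine obstacle beyond mere complication. You correctly identify that the naive height bound $h(\gamma_2)=O(r\log\beta_3+s\log\beta_2)$ is fatal, but your suggested fix---exploiting the approximate equality $r\log\beta_3\approx s\log\beta_2$---controls $|\log\gamma_2|$, not $h(\gamma_2)$. The four conjugates of $\beta_3^{\varepsilon r}\beta_2^{-\varepsilon s}$ in $\mathbb{Q}(\sqrt{bd},\sqrt{cd})$ include $\beta_3^{r}\beta_2^{s}$, whose logarithm is $r\log\beta_3+s\log\beta_2$ with no cancellation, so the Weil height remains of order $r\log d$. With $a_2'$ of that size, Theorem~\ref{laurent} would yield at best $m\log\beta_2\lesssim C\,r(\log d)^2(\log b')^2$, and since the available lower bound on $m$ is only $m>n>r/2$, the factors of $r$ cancel and you get no bound on $r$ at all. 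The paper avoids this entirely by using the congruence as a size constraint rather than as input to a linear form.
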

\begin{proof}
By Lemma \ref{reg_trojka_n_vece_rpola} we see that $n>r/2$ which implies $8n+4r>8n-4r>0$, and depending of $\varepsilon$, we have $8n\pm 4r\geq \frac{st}{\gcd (s,t)}\geq \frac{st}{4}.$ So, it always holds $n\geq \frac{st-16r}{32}\geq \frac{c(r-8)}{32}$. By Lemmas \ref{odnos_h_im} and \ref{odnosim_n_l} we have $h>2m>2n$, which yields $h>\frac{c(r-8)}{16}$. \par
Moreover, from Proposition \ref{granice_ac_i_h}\@  we have
$$h<3.46289\cdot 10^{10}\log \alpha_2 \log c.$$
Since
$$\alpha_2<\sqrt{ac+4}=\sqrt{\frac{16a}{r-8}\cdot \frac{c(r-8)}{16}+4}<\sqrt{16\frac{c(r-8)}{16}+4}$$
and
$$c=\frac{16}{r-8}\frac{c(r-8)}{16}<\frac{16}{10^{5/2}-8}\frac{c(r-8)}{16}<\frac{16}{308}\frac{c(r-8)}{16},$$
we have
$$\frac{c(r-8)}{16}<3.46289\cdot 10^{10}\log \left( \sqrt{16\frac{c(r-8)}{16}+4}\right) \log \left( \frac{16}{308}\frac{c(r-8)}{16}\right).$$
By direct calculation we get
$$\frac{c(r-8)}{16}<1.57493\cdot 10^{13}$$
and since $r^2-3+2r\geq c>3r$ we have $r<9164950$ and
$$h<3.46289\cdot 10^{10}\log (2r) \log (r^2-3+2r)<1.85682\cdot 10^{13}.$$
\end{proof}

From Lemmas \ref{reg_prvi_slucaj}, \ref{reg_drugi_slucaj}\@ and \ref{reg_treci_slucaj}\@ we see that there are only finitely many triples $\{a,b,c\}$ left to check whether they are contained in a $D(4)$-quintuple. In order to deal with these remaining cases we will use a Baker-Davenport reduction method over a linear form
$$\Lambda_1:=2h\log\frac{r+\sqrt{ab}}{2}-2j\log\frac{s+\sqrt{ac}}{2}+\log{\frac{\sqrt{c}(\sqrt{a}+\sqrt{b})}{\sqrt{b}(\sqrt{a}+\sqrt{c})}}.$$
More explicitly, a modification of the Baker-Davenport reduction method, from \cite{dujpet}, which we will use is stated next.

\begin{lemma}[Dujella, Peth\H{o}]\label{bd}
 Assume that $M$ is a positive integer. Let $p/q$ be the convergent of the continued fraction expansion of a real number $\kappa$ such that $q>6M$ and let
$$\eta= \| \mu q\|-M\cdot \|\kappa q \|,$$
where $\| \cdot \|$ denotes the distance from the nearest integer. If $\eta>0$, then the inequality
$$0<J\kappa -K+\mu <AB^{-J}$$
has no solution in integers $J$ and $K$ with
$$\frac{\log(Aq/\eta)}{\log B}\leq J  \leq  M.$$
\end{lemma}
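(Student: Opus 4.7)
The plan is a standard contradiction argument that exploits the best-approximation property of continued fraction convergents. Assume, for contradiction, that there exist integers $J$ and $K$ with $\log(Aq/\eta)/\log B \leq J \leq M$ satisfying $0 < J\kappa - K + \mu < AB^{-J}$. The lower bound on $J$ rearranges to $B^J \geq Aq/\eta$, i.e.\ $qAB^{-J} \leq \eta$. Multiplying the given inequality by $q$ therefore yields
$$0 < Jq\kappa - Kq + \mu q < qAB^{-J} \leq \eta.$$

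Next I would replace the two irrational quantities by their nearest integers and track the errors. Since $p/q$ is a convergent of $\kappa$, there exists an integer $p$ (the numerator) with $q\kappa = p + \varepsilon_1$ and $|\varepsilon_1| = \|q\kappa\|$; similarly let $n_0$ be the nearest integer to $\mu q$, so $\mu q = n_0 + \varepsilon_2$ with $|\varepsilon_2| = \|\mu q\|$. Substituting gives
$$(Jp - Kq + n_0) + (J\varepsilon_1 + \varepsilon_2) \in (0, \eta).$$
The first summand $N := Jp - Kq + n_0$ is an integer, while the second summand $E := J\varepsilon_1 + \varepsilon_2$ admits the two-sided bound $\eta \leq |E| \leq M\|q\kappa\| + \|\mu q\|$: the upper bound is just the triangle inequality together with $J \leq M$, and the lower bound (the crucial one) follows from the triangle inequality in the reverse direction, $|E| \geq |\varepsilon_2| - |J\varepsilon_1| \geq \|\mu q\| - M\|q\kappa\| = \eta$.

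The hypothesis $q > 6M$ is what makes the convergent property bite: it forces $\|q\kappa\| < 1/q < 1/(6M)$, hence $M\|q\kappa\| < 1/6$, and since $\|\mu q\| \leq 1/2$ we conclude $|E| < 2/3$. Combining $N + E \in (0, \eta)$ with $\eta \leq \|\mu q\| \leq 1/2$ and $|E| < 2/3$ confines $N$ to a very short list of integer values. I would then rule each one out. The main case, $N = 0$, forces $E \in (0, \eta)$, giving $|E| < \eta$ and contradicting the lower bound $|E| \geq \eta$ established above. The remaining small nonzero values of $N$ can be excluded by combining the positivity of $N + E$, the upper bound $N + E < \eta \leq 1/2$, and the sharp two-sided control on $|E|$; the hypothesis $\eta > 0$ (so $\|\mu q\| > M\|q\kappa\|$) is used precisely to keep $E$ from landing in the narrow window that would be needed.

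The main obstacle in a fully written-out proof is this case analysis on $N$: one must track the signs of $\varepsilon_1$ and $\varepsilon_2$ carefully and use the hypothesis $q > 6M$ to its full strength in order to squeeze out the nonzero values of $N$. Once that is done, every case yields a contradiction, so no $(J,K)$ in the stated range can satisfy the inequality, proving the lemma.
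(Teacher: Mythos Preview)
The paper does not prove this lemma at all: it is quoted verbatim from Dujella and Peth\H{o} \cite{dujpet} and used as a black box, so there is no ``paper's own proof'' to compare against.

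Your argument is on the right track but you have made it harder than it needs to be by introducing the nearest integer $n_0$ to $\mu q$. This is what creates the case split on $N$, and your sketch does not actually dispose of the case $N=1$ (it \emph{can} be done, but it requires a sign analysis you have not written out: one must show that $N=1$ forces $\varepsilon_2<-1/2$, contradicting $|\varepsilon_2|\le 1/2$). The standard proof avoids this entirely. After multiplying by $q$ you have
\[
0<J(q\kappa)-Kq+q\mu<qAB^{-J},
\]
and writing $q\kappa=p+\varepsilon_1$ with $|\varepsilon_1|=\|q\kappa\|$ gives, for the integer $L:=Jp-Kq$,
\[
|L+q\mu|\le |J\varepsilon_1|+qAB^{-J}\le M\|q\kappa\|+qAB^{-J}.
\]
Since $L$ is an integer, $\|q\mu\|\le|L+q\mu|$, hence $\eta=\|q\mu\|-M\|q\kappa\|<qAB^{-J}$, i.e.\ $J<\log(Aq/\eta)/\log B$, contradicting the assumed lower bound on $J$. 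No case analysis is needed, and the hypothesis $q>6M$ is not even used in this form of the argument (it is there only to guarantee that a convergent with $q>6M$ actually gives $\|q\kappa\|$ small enough for $\eta>0$ to be achievable in practice). So your decomposition with $n_0$ is a detour; drop it and the proof becomes three lines.
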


\begin{rm}
Consider the inequality  $\frac{c(r-8)}{16}<1.57493\cdot 10^{13}$ from the proof of Lemma \ref{reg_treci_slucaj}.\@ For a fixed $a$ we can calculate maximal $r$ by putting $c=a+\frac{r^2-4}{a}+2r$, and for smaller values of $a$ we get a much better bound on $r$ than the one calculated in the lemma. For example, for $a=1$ we have $r\leq 63164$. Of course, we must also consider bounds from Lemmas \ref{reg_prvi_slucaj}\@ and \ref{reg_drugi_slucaj}.
\end{rm}

As we said before, we will apply Lemma \ref{bd}\@ to the linear form in logarithms $\Lambda_1$, so we take $J=2h$,  $M=2\cdot 1.85682\cdot 10^{13}$. It took approximately $29$ hours and $45$ minutes to run the algorithm in Wolfram Mathematica $11.1$ package on the computer with Intel(R) Core(TM) i7-4510U CPU @2.00-3.10 GHz processor and in each case we got $J=2h<5$ which cannot be true since $2h>2\cdot 0.666662\sqrt{ac}>2\cdot 0.666662\cdot 10^{5/2}>421$. This proves our next theorem.

\begin{theorem}\label{teorem_regularne}
A regular $D(4)$-triple $\{a,b,a+b+2r\}$ cannot be extended to a $D(4)$-quintuple.
\end{theorem}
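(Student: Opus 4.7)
The plan is to argue by contradiction: suppose $\{a,b,c,d,e\}$ is a $D(4)$-quintuple with $a<b<c<d<e$ and $c=a+b+2r$. The groundwork is already in place. By Lemma \ref{reg_3_slucaja} we are in one of three cases, and Lemmas \ref{reg_prvi_slucaj}, \ref{reg_drugi_slucaj}, \ref{reg_treci_slucaj} then give effective bounds $s\leq 201884$, $t\leq 127293$, or $r\leq 9164950$, respectively. Using $b=(r^2-4)/a$ and $c=a+b+2r$, each of these converts into an explicit finite list of candidate triples $\{a,b,c\}$; the Remark further sharpens the bound on $r$ for small $a$ (e.g.\ $r\leq 63164$ when $a=1$), which will be important to keep the enumeration feasible.

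For each remaining triple, the aim is to derive a contradiction by a Baker--Davenport reduction on the linear form
$$\Lambda_1=2h\log\alpha_1-2j\log\alpha_2+\log\alpha_3,$$
where $\alpha_1=(r+\sqrt{ab})/2$, $\alpha_2=(s+\sqrt{ac})/2$ and $\alpha_3=\sqrt{c}(\sqrt{a}+\sqrt{b})/(\sqrt{b}(\sqrt{a}+\sqrt{c}))$. Dividing by $2\log\alpha_2$, rewrite $0<\Lambda_1<\alpha_2^{-4j}$ as $0<2h\kappa-2j+\mu<AB^{-2h}$ with $\kappa=\log\alpha_1/\log\alpha_2$, $\mu=(\log\alpha_3)/(2\log\alpha_2)$, $B=\alpha_1^{2}$ and an explicit $A$. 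By Lemma \ref{reg_treci_slucaj} (and the bounds of Lemmas \ref{reg_prvi_slucaj}--\ref{reg_drugi_slucaj}, which are much smaller) we may take $M=2\cdot 1.85682\cdot 10^{13}$ as the upper bound on $J=2h$ in Lemma \ref{bd}.

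Now, for each triple one selects a convergent $p/q$ of the continued fraction of $\kappa$ with $q>6M$ and checks that $\eta=\|\mu q\|-M\|\kappa q\|>0$. When this holds, Lemma \ref{bd} forces $2h\leq \log(Aq/\eta)/\log B$, which for these parameters will be a very small number (empirically a single-digit integer). This contradicts the lower bound $2h\geq 2\cdot 0.666662\sqrt{ac}>421$ supplied by Lemma \ref{indeksi_ac} together with $ac\geq ab>10^5$, completing the case.

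The principal obstacle is purely computational: exhausting all triples below the bounds of Lemmas \ref{reg_prvi_slucaj}--\ref{reg_treci_slucaj} is a large search (several hours of machine time, as the authors indicate). The only conceptual concern is that the first convergent with $q>6M$ might accidentally produce $\eta\leq 0$; in that event one falls back on the next convergent, or, in the worst case, one performs a second reduction pass using the improved bound on $2h$ obtained from the first pass. Since the bound $M$ we must handle here is moderate and the heights of $\kappa,\mu$ are very small relative to $M$, a single successful pass is expected to suffice for every candidate triple, yielding the desired contradiction and hence the theorem.
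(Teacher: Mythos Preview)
Your proposal is correct and follows essentially the same approach as the paper: invoke Lemma~\ref{reg_3_slucaja} to split into three cases, use Lemmas~\ref{reg_prvi_slucaj}--\ref{reg_treci_slucaj} (and the subsequent Remark) to obtain finite effective bounds on the triples, then run Baker--Davenport reduction (Lemma~\ref{bd}) on $\Lambda_1$ with $J=2h$ and $M=2\cdot 1.85682\cdot 10^{13}$, obtaining $2h<5$ in every case and contradicting $2h>0.666662\cdot 2\sqrt{ac}>421$. The paper reports the same parameters and the same outcome (about $30$ hours of computation, $J=2h<5$ throughout), so your plan matches the published argument.
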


\section{$D(4)$-quintuples with non-regular triples}

It remains to show that a non-regular $D(4)$-triple cannot be extended to a quintuple. In the proof of the next two theorems we follow the methods used in Theorems 8\@ and 9\@ from  \cite{petorke}, but as we also said before, results similar to those from \cite{cff}, which we need in order to prove these Theorems, could not be proven for every $D(4)$-quintuple and here we will show how our results from Section \ref{Rickert_section}\@ can again be used in proving some special results for $D(4)$-quintuples for which $c$ is not the smallest possible, i.e.\@ $c\neq a+b+2r$.

\par
\begin{theorem}
A $D(4)$-triple $\{a,b,c\}$ for which $\deg (a,b,c)=1$ cannot be extended to a $D(4)$-quintuple.
\end{theorem}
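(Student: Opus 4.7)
The plan is to follow the strategy of \cite[Theorem~8]{petorke}, adapted to the $D(4)$ setting and using the Rickert-type result of Section~\ref{Rickert_section}.

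First I would unpack what $\deg(a,b,c)=1$ forces on the triple. By definition of $\partial$ and by Proposition~\ref{drugaprop} the set $\partial(\{a,b,c\})$ is a regular triple and $\{\partial(\{a,b,c\}),c\}$ is a regular $D(4)$-quadruple with $c=d_+$ of the regular sub-triple. Splitting on the position of $d_-(a,b,c)$ relative to $a$ and $b$ produces three cases: in the ``tallest'' case $b<d_-<c$ one gets $d_-=a+b+2r$ and $c=r(a+r)(b+r)$; the other two cases give analogous explicit formulas in which $a$ or $b$ is the largest element of the regular sub-triple. In every one of the three sub-cases $c\neq a+b+2r$, so Lemma~\ref{nas_rez_granice} gives $b>4a$ and $c>\max\{ab,4b\}$, and Proposition~\ref{granicagornja_na_c} gives $c<237.952\, b^3/a$.

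Next I would apply Lemma~\ref{ricket_konacno} to the quadruple $\{a,b,c,d\}$ with $d=d_+$, taking for $(A,B)$ the two smaller elements of the regular sub-triple $\partial(\{a,b,c\})$. Because that sub-triple is regular the hypothesis $N\ge 59.488\, A'B^2(B-A)^2g^{-4}$ is easy to verify explicitly, and the resulting upper bound on the index $n$ in \eqref{parnirjjdbaCD}, combined with the lower bound $n>0.333331\sqrt{ac}$ from Lemma~\ref{indeksi_ac} and the absolute bounds $ac<1.17732\cdot 10^{28}$, $h<7.23357\cdot 10^{13}$ from Proposition~\ref{granice_ac_i_h}, should confine the pair $(a,b)$ to a small finite list. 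For each surviving $(a,b)$ the degree-one structure then pins $c$ down (up to at most three candidates, one per sub-case), and I would apply the Baker--Davenport reduction of Lemma~\ref{bd} to the linear form $\Lambda_1$ exactly as in the proof of Theorem~\ref{teorem_regularne}, with $M=2\cdot 7.23357\cdot 10^{13}$. A couple of rounds of reduction should drop the admissible value of $2h$ below the lower bound $2h>2\cdot 0.666662\sqrt{ac}$ of Lemma~\ref{indeksi_ac}, producing the desired contradiction.

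The main obstacle will be the middle case $a<d_-<b$, in which the regular sub-triple is $\{a,d_-,b\}$ and $d_-$ itself appears in the natural choice of $(A,B)$ in Lemma~\ref{ricket_konacno}; verifying the Rickert hypotheses there requires an independent estimate of $d_-$ in terms of $a,b$ coming from $b=a+d_-+2\sqrt{ad_-+4}$, which must then be threaded through the inequality on $N$. A secondary obstacle is keeping the Baker--Davenport search computationally feasible over the finite list of $(a,b)$, which is exactly why the sharper bound $h<3.46289\cdot 10^{10}\log\alpha_2\log c$ of Proposition~\ref{granice_ac_i_h}, rather than only the weaker numerical bound, is needed as the starting point of the reduction.
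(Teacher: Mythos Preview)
Your high-level plan---Rickert to cut the parameter space down to a finite set, then Baker--Davenport on $\Lambda_1$---matches the paper, but the way you propose to run the Rickert step will not work as written.

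The gap is in your choice of $(A,B)$. You want to take the two smaller elements of the regular sub-triple $\partial(\{a,b,c\})$. In the case $d_{-1}<b$ this means one of $A,B$ equals $d_{-1}$. But Lemma~\ref{ricket_konacno} is applied to a $D(4)$-triple $\{A,B,C\}$ that is being extended to a quadruple by a further element, and in the quintuple setting the only triple/extension for which the hypotheses $z=v_{2m}=w_{2n}$, $|z_1|=2$ are known is $\{a,b,d\}$ extended by $e$ (this is exactly how Proposition~\ref{granicagornja_na_c} uses the lemma). The element $d_{-1}$ does not belong to the quintuple $\{a,b,c,d,e\}$, and nothing guarantees that $d_{-1}e+4$ is a square, so there is no Pellian system to which you can attach Lemma~\ref{ricket_konacno} with $d_{-1}$ playing the role of $A$ or $B$. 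Your ``middle case obstacle'' is therefore not merely a matter of threading an estimate for $d_{-1}$ through the inequality on $N$; the whole application collapses.

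What the paper actually does avoids this by always taking $(A,B,C)=(a,b,d)$ and never splitting on the position of $d_{-1}$. First one checks $d_{-1}\ge a$ (since $d_{-1}=a+b-2r>a-1$ when $d_{-1}<b$, so there are only two values $d_{-1}=a+b\pm 2r$, not three cases), hence $c>abd_{-1}\ge a^{2}b$. Then Rickert is run \emph{twice} in a bootstrap: assuming $b\le 81a$ one verifies the hypothesis $d\ge 59.488A'B(B-A)^{2}/(Ag^{4})$ from $d>abc>\tfrac{10^{5}}{k^{3}}b^{4}$ and derives a contradiction, so $b>81a$; this forces $d_{-1}>\tfrac{7}{9}b$ and $c>\tfrac{7}{9}ab^{2}$, and a second application of Rickert under the assumption $b<18.0793\,a^{3/2}$ again yields a contradiction, so $b>18.0793\,a^{3/2}$. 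Only after this double bootstrap do the absolute bounds $ac<1.17732\cdot10^{28}$ from Proposition~\ref{granice_ac_i_h} translate into the workable ranges $r\le 11091997$, $a\le 135873$, and the Baker--Davenport reduction on $\Lambda_1$ with $M=2\cdot 7.23357\cdot 10^{13}$ finishes as you describe. Without the two-stage gap $b>81a\Rightarrow b>18.0793\,a^{3/2}$ the search space is not small enough, so this is the missing idea you need to incorporate.
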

\begin{proof}
By Lemma \ref{c_granice} we have $c>\max\{ab,4b\}$, and by Lemma \ref{nas_rez_granice}\@ we also know $b>4a$. Moreover, by the definition of the degree of a triple we know that $\{d_{-1},a,b,c\}$ is a regular quadruple. Also, $\{d_{-1},a,b\}$ is a regular triple, so if $d_{-1}>b$, we have $d_{-1}=a+b+2r$, and if $d_{-1}<b$, it can by easily shown that $d_{-1}=a+b- 2r$. So, we have $d_{-1}=a+b\pm 2r$  and $c=d_{+}(a,d_{-1},b)=r(r\pm a)(b\pm r)$.
\par  For $d_{-1}$ we have
$$d_{-1}\geq a+b-2r\geq a+b-2\sqrt{\frac{b^2}{4}+4}>a-1,$$
i.e.\@ $d_{-1}\geq a$ so $c>abd_{-1}\geq a^2b$.\par
Assume that $4a<b\leq k\cdot a$. Now we wish to apply Lemma \ref{ricket_konacno} for $A=a$, $B=b$ and $C=d$, and to do so we must satisfy conditions of Theorem \ref{teorem2.1} and find the greatest $k$ for which we can do so. \\
Since $c> a^2b $ and $10^5<b<k\cdot a$, we have $a>\frac{10^5}{k}$ and $c>\frac{10^5}{k} ab$. Now we see
$$d>abc>\frac{10^5}{k} abab\geq \frac{10^5}{k} \frac{b^2}{k^2} b^2=\frac{10^5}{k^3}b^4.$$
On the other hand, since $b>4a$ we have $b-a>3a$ and $A'=\max\{4(B-A),4A\}=4(B-A)$. This yields
\begin{align*}
\frac{59.488A'B(B-A)^2}{Ag^4}&=237.952\frac{(b-a)^3b}{ag^4}<237.952\frac{(b-a)^3b}{a}\\
&\leq 237.952\left(\frac{k-1}{k}\right)^3kb^3,
\end{align*}
and we see that it is enough to observe $k$'s such that
$$\frac{10^5}{k^3}b>237.952\left(\frac{k-1}{k}\right)^3k.$$
Since $b>10^5$ we get $k\leq 81$, which means that now we can assume $4a<b\leq 81a$. Observe an extension of  a $D(4)$-triple $\{a,b,d\}$ to a $D(4)$-quadruple. For the index $n$, (which refers to an extension to a quadruple and not a quintuple), we have by Lemma \ref{ricket_konacno} that
\begin{equation*}n<\frac{4\log(8.40335\cdot 10^{13}(A')^{\frac{1}{2}}A^{\frac{1}{2}}B^2Cg^{-1})\log(0.20533A^{\frac{1}{2}}B^{\frac{1}{2}}C(B-A)^ {-1}g)}{\log(BC)\log(0.016858A(A')^{-1}B^{-1}(B-A)^{-2}Cg^4)}.
\end{equation*}
We can use $\frac{3}{4}b<b-a<\frac{80}{81}b$ and $1\leq g=\gcd(a,b) \leq a$ and we observe expressions
\begin{align*}
8.40335\cdot 10^{13}(A')^{\frac{1}{2}}A^{\frac{1}{2}}B^2Cg^{-1}&<8.35132\cdot 10^{13} b^3d,\\
0.20533A^{\frac{1}{2}}B^{\frac{1}{2}}C(B-A)^ {-1}g&<0.03423bd,\\
0.016858A(A')^{-1}B^{-1}(B-A)^{-2}Cg^4&>0.0000544b^{-3}d,
\end{align*}
thus we have
$$n<\frac{4\log(8.35132\cdot 10^{13} b^3d)\log(0.03423bd)}{\log(bd)\log(0.000054b^{-3}d)}.$$
Function on the right hand side of the inequality is decreasing in $d$ for $d>0$, and since $d>\frac{10^5}{81^3}b^4>0.1881676b^4$ we obtain
$$n<\frac{4\log(1.571449\cdot 10^{13} b^7)\log(0.006441b^5)}{\log(0.1881676b^5)\log(0.000010161b)}.$$
Similarly as in Proposition \ref{granicagornja_na_c}, we have
$$n\geq \frac{m}{2}>0.309017\sqrt{ac}>0.309017\sqrt{\frac{b}{81}\frac{10^5}{81}\frac{b}{81}b}>0.134046b^{3/2}.$$
By combining the two inequalities we get $b<98416<10^5$ which cannot be true. This means that our assumption was wrong and we have $b>81a$.\par
Now we have an even better lower bound
$$d_{-1}>a+b-2\sqrt{\frac{b^2}{81}+4}>a+b-\frac{2}{9}\sqrt{b^2+324}>a+b-\frac{2}{9}(b+1)>\frac{7}{9}b$$
so
$$c>abd_{-1}>\frac{7}{9}ab^2$$
and $ac>\frac{7}{9}(ab)^2.$\par
Assume now that $81a<b<18.0793a^{3/2}$. Then we have $a>18.0793^{-2/3}b^{2/3}$. Observe that
$$
\frac{59.488A'B(B-A)^2}{Ag^4}=237.952\frac{(b-a)^3b}{ag^4}<1639.12b^{10/3}.
$$
On the other hand, since $d_{-1}>\frac{7}{9}b>\frac{7}{9}10^5$, we have
$$d>abc>d_{-1}a^2b^2>d_{-1}18.0793^{-4/3}b^{10/3}>1639.129b^{10/3},$$
so we can again use Lemma \ref{ricket_konacno}. Notice that $A'=4(B-A)<4B$ and $1\leq g \leq a <b/81$ so we use
\begin{align*}
8.40335\cdot 10^{13}(A')^{\frac{1}{2}}A^{\frac{1}{2}}B^2Cg^{-1}
&<1.86742\cdot 10^{13} b^3d\\
0.20533A^{\frac{1}{2}}B^{\frac{1}{2}}C(B-A)^ {-1}g&<0.0002852bd\\
0.016858A(A')^{-1}B^{-1}(B-A)^{-2}Cg^4&>0.00061b^{-10/3}d,
\end{align*}
to obtain
$$n<\frac{4\log(1.86742\cdot 10^{13} b^3d)\log(0.0002852bd)}{\log(bd)\log(0.00061b^{-10/3}d)}.$$
Moreover
$$
d>abc>d_{-1}a^2b^2>\frac{7}{9}18.0793^{-4/3}b^{4/3}b^3>0.01639b^{13/3}
$$
and since the function on the right hand side of inequality is decreasing in $d$, for $d>0$, we can insert this lower bound on $d$ and get
$$n<\frac{4\log(3.0608\cdot 10^{11} b^{22/3})\log(4.6743\cdot 10^{-6}b^{16/3})}{\log(0.01639b^{16/3})\log(9.99\cdot 10^{-6}b)}.$$
On the other hand,
\begin{align*}
n&\geq \frac{m}{2}>0.309017\sqrt{ac}>0.272527ab>0.272527\cdot 18.0793^{-2/3}b^{2/3} b\\
&>0.03956b^{5/3}
\end{align*}
which gives us $b\leq 8$ after combining the inequalities. This, of course, leads to a contradiction which means that we must have $b>18.0793a^{3/2}$.\par
From $b>18.0793a^{3/2}$ we have
$$a^{5/2}<\frac{r^2-4}{18.0793}.$$
Since by Proposition \ref{granice_ac_i_h},\@ we have  $ac<1.17732\cdot 10^{28}$, this implies $\frac{7}{9}(ab)^2<1.17732\cdot 10^{28}$ i.e.\@ $ab<1.23033\cdot 10^{14}$, which gives us $r\leq 11091997$ and $a\leq 135873$.\par
With these upper bounds, we again apply Baker-Davenport reduction on a linear form in logarithms $\Lambda_1$, with $J=2h$, $M=2\cdot 7.23357\cdot 10^{13}$. For each $\{a,b\}$ we check two options for $c$, namely $c=r(r\pm a)(b\pm r)$. It took $11$ days and $18$ hours to check all possibilities and in each case we had $J=2h<5$, which again cannot be true. This proves our theorem.
\end{proof}

All the remaining cases are covered in the next theorem which concludes the proof of Theorem \ref{glavni}.

\begin{theorem}
A $D(4)$-triple $\{a,b,c\}$ such that $\deg(a,b,c)\geq 2$ cannot be extended to a $D(4)$-quintuple.
\end{theorem}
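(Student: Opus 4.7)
The plan mirrors the strategy of the degree-one theorem, while exploiting the extra growth in $c$ afforded by the hypothesis $\deg(a,b,c) \geq 2$. Suppose for contradiction that $\{a,b,c,d,e\}$ is a $D(4)$-quintuple with $a<b<c<d<e$ and $D := \deg(a,b,c) \geq 2$.

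First I would upgrade the lower bound on $c$ using the degree hypothesis. By the definition of $\partial$, the predecessor triple $\partial_{-1}(\{a,b,c\})$ is itself non-regular, with largest element $c' = d_{-1}(a,b,c)$. Applying Lemma \ref{c_granice} to this predecessor yields $c' > \max\{a'b', 4b'\}$, where $\{a',b'\} \subseteq \{a,b,d_{-1}\}$ are its two smaller elements; Proposition \ref{drugaprop} together with Lemma \ref{granice_za_d_plus} then give $c > a'b'c'$. A short case analysis on whether $d_{-1}$ sits below $a$, between $a$ and $b$, or between $b$ and $c$ produces a lower bound on $c$ of the form
\[
c \;>\; a'b'c' \;\gg\; a^\alpha b^\beta \quad \text{with } \alpha+\beta > 2,
\]
substantially stronger than the bare $c > ab$ used in the degree-one argument; iterating to higher degree $D$ only strengthens this.

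Next I would replay the Rickert bootstrap from the previous theorem using this sharpened lower bound. Lemma \ref{ricket_konacno} applied to the triple $\{a,b,d\}$ with $d > abc$ now much larger tightens the successive exclusions $b > 81a$ and $b > 18.0793\,a^{3/2}$ seen in the degree-one case, producing $b \gg a^{3/2}$ (or better) quickly. Combining with the universal bound $ac < 1.17732 \cdot 10^{28}$ from Proposition \ref{granice_ac_i_h} then confines $a$, $r$, and hence the pair $\{a,b\}$, to a small finite range. For every triple $\{a,b,c\}$ that survives, I would run Baker-Davenport reduction (Lemma \ref{bd}) on the linear form $\Lambda_1$ with $J = 2h$ and $M = 2\cdot 7.23357\cdot 10^{13}$, exactly as in Theorem \ref{teorem_regularne}. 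Each reduction returns $J < 5$, contradicting $2h > 2\cdot 0.666662\,\sqrt{ac} > 421$.

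The main obstacle is the case analysis in the first step: one must carefully identify which of $\{a,b,d_{-1}\}$ plays the role of the two smaller elements of the non-regular predecessor, since the effective lower bound on $c$ depends on this choice, and further sub-cases proliferate for $D \geq 3$. A secondary practical concern is keeping the Baker-Davenport computation in the third step tractable, but the accelerated growth from $D \geq 2$ should cut the finite search substantially relative to the multi-day computation reported in the degree-one case.
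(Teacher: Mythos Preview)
Your first step contains a genuine gap: the claimed lower bound $c \gg a^{\alpha}b^{\beta}$ with $\alpha+\beta>2$ is not true in general. In the case $d_{-1}<a$ your own decomposition gives $a'=d_{-1}$, $b'=a$, $c'=b$, so $c>a'b'c'=d_{-1}ab$; but $d_{-1}$ can be as small as $1$, and then you recover only $c>ab$. The paper's Case~I, $c\in(ab,\,a^{1/2}b^{3/2}]$, is exactly this situation: from $abd_{-1}<c\le a^{1/2}b^{3/2}$ one gets $ad_{-1}<(ab)^{1/2}$, so $d_{-1}$ is forced to be small and no exponent gain on $a,b$ is available. Consequently the Rickert bootstrap you invoke cannot get started here: the inequalities $b>81a$ and $b>18.0793\,a^{3/2}$ in the degree-one proof rested on the explicit lower bounds $c>a^{2}b$ and $c>\tfrac{7}{9}ab^{2}$, which came from the special shape $d_{-1}=a+b\pm 2r$ of a regular predecessor and have no analogue when $d_{-1}$ is tiny.

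The paper's route is therefore genuinely different from yours. Rather than seeking a single strengthened lower bound on $c$, it partitions the range $ab<c<237.952\,b^{3}/a$ into four subintervals and, in each, uses the existence of \emph{both} $d_{-1}$ and $d_{-2}$ (this is where $\deg\ge 2$ enters) to bound one of the auxiliary quantities $r_{(a,d_{-1})}$ or $r_{(a,d_{-2})}$ by an absolute constant via $ac<1.17732\cdot 10^{28}$. That bound turns the search into a finite enumeration of pairs $(a,d_{-1})$ or $(a,d_{-2})$, from which $b$ is recovered as a Pell-sequence value and $c=d_{+}(a,b,d_{-1})$; Baker--Davenport on $\Lambda_{1}$ then finishes each case. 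Your expectation that the computation would shrink is also off: the paper's Case~IV alone took roughly ten days, comparable to the degree-one search.
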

\begin{proof}
If $\deg(a,b,c)\geq 2$ we have that $d_{-1}=d_{-}(a,b,c)$ and $d_{-2}=d_{-}(a,b,d_{-1})$ are positive integers. Moreover, here we also have $b>4a$, $b>10^5$ and $c>\max\{ab,4b\}$. \par
Since from Proposition \ref{granicagornja_na_c} we have an upper bound on $c$, we will separate our observation in four subintervals
$$c\in \left< ab, a^{\frac{1}{2}}b^{\frac{3}{2}} \right]\cup \left< a^{\frac{1}{2}}b^{\frac{3}{2}},ab^2\right]\cup \left<ab^2,ab^{\frac{5}{2}}\right]\cup \left<ab^{\frac{5}{2}},\frac{237.952b^3}{a}\right]. $$

\textbf{Case I:}  $c\in \left< ab, a^{\frac{1}{2}}b^{\frac{3}{2}} \right]$. \\
Since $c=d_{+}(a,b,d_{-1})$, we have $c>abd_{-1}$ and $ad_{-1}<(ab)^{1/2}$, i.e.\@ $ab>(ad_{-1})^2$. \\
On the other hand, $ac>(ab)(ad_{-1})>(ad_{-1})^3$, therefore
$$r_{(a,d_{-1})}=\sqrt{ad_{-1}+4}<\sqrt{(1.17732\cdot 10^{28})^{1/3}+4}<47697,$$
and since $d_{-1}\neq 0$, we also have $r_{(a,d_{-1})}\geq 3$. Our goal is for each $r\in[3,47696]$ to find all possible pairs $\{a,d_{-1}\}$. Moreover, since $\{a,d_{-1},b\}$ is a $D(4)$-triple, $b$ is obtained as a solution of generalized Pell equation
$$\mathcal{A}\mathcal{V}^2-\mathcal{B}\mathcal{U}^2=4(\mathcal{A}-\mathcal{B})$$
where $\mathcal{A}\mathcal{B}+4=\mathcal{R}^2$, $\mathcal{A}<\mathcal{B}$ are positive integers. We know that all solutions of these equation are of the form
$$\mathcal{V}\sqrt{\mathcal{A}}+\mathcal{U}\sqrt{\mathcal{B}}=(\mathcal{V}_0\sqrt{\mathcal{A}}+\mathcal{U}_0\sqrt{\mathcal{B}})\left( \frac{\mathcal{R}+\sqrt{\mathcal{AB}}}{2}\right)^q,$$
where $q\geq 0$ is integer  and  $(\mathcal{U}_0,\mathcal{V}_0)$ is a solution which satisfy
$$0\leq \mathcal{U}_0 \leq \sqrt{\frac{\mathcal{A}(\mathcal{B}-\mathcal{A})}{\mathcal{R}-2}},\quad  1\leq |\mathcal{V}_0| \leq \sqrt{\frac{(\mathcal{R}-2)(\mathcal{B}-\mathcal{A})}{\mathcal{A}}}. $$
Solutions can also be expressed as binary recurrence sequences
$$\mathcal{U}_0,\quad \mathcal{U}_1=\frac{\mathcal{U}_0\mathcal{R}+\mathcal{V}_0\mathcal{A}}{2},\quad \mathcal{U}_{m+2}=\mathcal{R}\mathcal{U}_{m+1}-\mathcal{U}_m.$$
Then we see that $b=\frac{\mathcal{U}^2-4}{\mathcal{A}}=\frac{\mathcal{V}^2-4}{\mathcal{B}}$, so it also must be true that $\mathcal{A}$ divides $\mathcal{U}^2-4$. \\
Since  $a^2b<ac<1.17732\cdot 10^{28}$ we have $b<\frac{1.17732\cdot 10^{28}}{a^2}\leq \frac{1.17732\cdot 10^{28}}{\mathcal{A}^2},$
so
$$\mathcal{U}<\sqrt{\frac{1.17732\cdot 10^{28}}{\mathcal{A}}+4},\quad |\mathcal{V}|<\sqrt{\mathcal{B}\frac{1.17732\cdot 10^{28}}{\mathcal{A}^2}+4}.$$
Now we observe an algorithm in which for each $\mathcal{R}=r_{(a,d_{-1})}\in [3, 47696]$ we search for divisors $d'$ of $\mathcal{R}^2-4$ such that $1\leq d'\leq \mathcal{R}$ and we set $\mathcal{A}=d'$ and $\mathcal{B}=\frac{\mathcal{R}^2-4}{\mathcal{A}}$. For a fixed pair $(\mathcal{A},\mathcal{B})$ we find all possible solutions $(\mathcal{U}_0,\mathcal{V}_0)$ within given bounds and for each pair we find sequence $\mathcal{U}_m$ up until the upper bound for $\mathcal{U}$ expressed before. For each $\mathcal{U}$ we check if $\mathcal{A}|\mathcal{U}^2-4$ and then take $b=\frac{\mathcal{U}^2-4}{\mathcal{A}}$ and for each possibility $(a,d_{-1})\in \{(\mathcal{A},\mathcal{B}),(\mathcal{B},\mathcal{A})\}$ we can calculate $c=d_{+}(a,b,d_{-1})$ and if $c\in \left< ab, a^{\frac{1}{2}}b^{\frac{3}{2}} \right]$ we can do Baker-Davenport reduction for the triple $\{a,b,c\}$ with parameters as in Theorem \ref{teorem2.1}. It took $7$ hours and $54$ minutes to check all possibilities and we got $J<5$ in each case.
\par
\textbf{Case II:}  $c\in \left< a^{\frac{1}{2}}b^{\frac{3}{2}}, ab^2\right]$. \\
We have $abd_{-1}<c<ab^2$, thus $d_{-1}<\frac{c}{ab}<b$, i.e.\@ $b=\max\{a,b,d_{-1}\}$. By Lemma \ref{granice_za_d_plus}\@ we have
$$a^{\frac{1}{2}}b^{\frac{3}{2}}<c<ad_{-1}b+4b=b(ad_{-1}+4),$$
which yields
$$(ab)^{1/2}<ad_{-1}+4.$$
Similarly, $d_{-2}=d_{-}(a,b,d_{-1})$, therefore $b>ad_{-1}d_{-2}$ and $d_{-2}<\frac{b}{ad_{-1}}<\frac{b}{(ab)^{1/2}-4}$. Now we have
$$ad_{-2}<(ab)^{1/2}\frac{(ab)^{1/2}}{(ab)^{1/2}-4}=(ab)^{1/2}\left(1+\frac{4}{(ab)^{1/2}-4} \right)<1.01282(ab)^{1/2},$$
and also we can see that $ab>\left(\frac{ad_{-2}}{1.01282} \right)^{2}$. Moreover
$$ad_{-2}<1.01282(ad_{-1}+4)=1.01282ad_{-1}+4.05128,$$
so $$\frac{ad_{-2}-4.05128}{1.01282}<ad_{-1}.$$
Now,
$$ac>(ab)(ad_{-1})>\left(\frac{ad_{-2}}{1.01282} \right)^{2}\frac{ad_{-2}-4.05128}{1.01282}$$
and since $ac<1.17732\cdot 10^{28}$, we get $ad_{-2}<2.30408\cdot 10^9$ and
$$r_{(a,d_{-2})}=\sqrt{ad_{-2}+4}<48001. $$
We also know that $d_{-1}<b<c^{2/3}<(1.17732\cdot 10^{28})^{2/3}<35.17524\cdot 10^{18}.$ Similarly as in the first case, algorithm is done for $\mathcal{R}=r_{(a,d_{-2})}$ where we search for pairs $(\mathcal{A,B})$, but we set $d_{-1}=\frac{\mathcal{U}^2-4}{\mathcal{A}}$ and observe both possibilities $(a,d_{-2})\in \{(\mathcal{A},\mathcal{B}),(\mathcal{B},\mathcal{A})\}$ and $b=d_{+}(a,d_{-1},d_{-2})$, $c=d_{+}(a,b,d_{-1})$. It took $1$ hours and $34$ minutes to do the reduction and we got $J<5$ in each case.
\par
\textbf{Case III:}  $c\in \left< ab^2, a^{\frac{3}{2}}b^{\frac{5}{2}}\right]$. \\
Here we have $(ab)^2<ac<1.17732\cdot 10^{28}$, so $r=\sqrt{ab+4}\leq 10416543$.
It can be shown that $b<d_{-1}<\frac{c}{ab}$, therefore we have $d_{-1}<\frac{a^{\frac{3}{2}}b^{\frac{5}{2}}}{ab}=a^{1/2}b^{3/2}.$ Since $b<d_{-1}$, we have $d_{-1}=d_{+}(a,b,d_{-2})$ and $d_{-1}>abd_{-2}$, i.e.\@
$$ad_{-2}<\frac{d_{-1}}{b}<(ab)^{1/2}<r\leq 10416543$$
and $r_{(a,d_{-2})}=\sqrt{ad_{-2}+4}<3228.$\\
The algorithm is  similar as in Case $II.$, except $b$ and $d_{-1}$ exchange definition, so $b=\frac{\mathcal{U}^2-1}{\mathcal{A}}$, and $d_{-1}=d_{+}(a,b,d_{-2})$. It took less than $3$ minutes to check all possibilities and we got $J<5$ in each case.

\par \textbf{Case IV:}  $c\in \left<a^{\frac{3}{2}}b^{\frac{5}{2}}, \frac{237.952b^3}{a}\right]$. \\
Here we have $a^{\frac{3}{2}}b^{\frac{5}{2}}< \frac{237.952b^3}{a}$, which yields  $b>\frac{a^5}{237.952^2}$ and
$$1.17732\cdot 10^{28}>ac>(ab)^{5/2}>(a^6\cdot 237.952^{-2})^{5/2},$$
therefore we get $a\leq 460$.\\
As in Case $III.$, here we have $b<d_{-1}$, $d_{-1}=d_{+}(a,b,d_{-2})$ and $c=d_{+}(a,b,d_{-1})$. Therefore
$$d_{-1}<\frac{c}{ab}<\frac{237.952b^2}{a^2},\quad  d_{-2}<\frac{d_{-1}}{ab}<\frac{237.952b}{a^3}.$$
From $(ab)^{5/2}<1.17732\cdot 10^{28}$ we have $ab<1.69184 \cdot10^{11}$. Also, from $a^4d_{-2}<237.952ab$ we get $ad_{-2}<\frac{4.02576\cdot 10^{13}}{a^3}$, thus $r_{(a,d_{-2})}<\frac{6344883}{a^{3/2}}$.\\
Since $a\leq 460$, it is more efficient if we, for each fixed $a$, search $r_{(a,d_{-2})}$ inside interval $\left[3,\frac{6344883}{a^{3/2}}\right]$ such that $a|r_{(a,d_{-2})}^2-4$ and set $d_{-2}=\frac{r_{(a,d_{-2})}^2-4}{a}$ and do similarly as in previous cases. It took $9$ days and $21$ hour to check all possibilities and again we got $J<5$ in each case.
\end{proof}

\bigskip
\textbf{Acknowledgement:} The authors are supported by Croatian Science Foundation under the project no. 6422. The authors are also very grateful to Mihai Cipu who made many important
remarks and suggestions on previous version of this paper. \\

\bigskip
Marija Bliznac Trebje\v{s}anin\\
Faculty of Science, University of Split,\\ Ru\dj{}era Bo\v{s}kovi\'{c}a 33, 21000 Split, Croatia \\
Email: marbli@pmfst.hr \\
\\
Alan Filipin\\
Faculty of Civil Engineering, University of Zagreb,\\ Fra Andrije Ka\v{c}i\'{c}a-Mio\v{s}i\'{c}a 26, 10000 Zagreb, Croatia \\
Email: filipin@grad.hr \\

\end{document}